\documentclass{ip-journal}
\setlength{\marginparwidth}{1.2in}
\usepackage{amssymb,amsfonts}
\usepackage[all,arc]{xy}
\usepackage{enumerate}
\usepackage{mathrsfs}
\usepackage{tikz}
\usepackage{tikz-cd}
\usetikzlibrary{arrows,snakes,backgrounds}
\tikzset{>=stealth}
\usepackage{color}   
\usepackage{marginnote}
\usepackage{sseq}
\usepackage{nameref}
\usepackage[backref=page]{hyperref}
\usepackage{MnSymbol} 
\hypersetup{
    colorlinks=true, 
    linktoc=all,     
    citecolor=black,
    filecolor=black,
    linkcolor=blue,
    urlcolor=black
}
\usepackage{cleveref}
\usetikzlibrary{arrows}

\newtheorem{thm}{Theorem}[section]
\newtheorem*{thm*}{Theorem}
\newtheorem{cor}[thm]{Corollary}
\newtheorem{prop}[thm]{Proposition}
\newtheorem{lem}[thm]{Lemma}

\newtheorem{problem}[thm]{Problem}

\theoremstyle{definition}
\newtheorem{defn}[thm]{Definition}

\theoremstyle{remark}
\newtheorem{rem}[thm]{Remark}

\newcommand{\bF}{\mathbb{F}}

\newcommand{\bQ}{\mathbb{Q}}
\newcommand{\bR}{\mathbb{R}}
\newcommand{\bS}{\mathbb{S}}

\newcommand{\bZ}{\mathbb{Z}}

\newcommand\Diff{\mathrm{Diff}}
\newcommand\Ham{\mathrm{Ham}}
\newcommand\eHam{\widetilde{\Ham}}
\newcommand\deHam{\widetilde{\Ham}^{\delta}}
\newcommand\BdeHam{\mathrm{B}\widetilde{\Ham}^{\delta}}
\newcommand\dHam{\mathrm{Ham}^{\delta}}
\newcommand\BdHam{\mathrm{BHam}^{\delta}}
\newcommand\Symp{\mathrm{Symp}}
\newcommand\BSymp{\mathrm{BSymp}}
\newcommand\dSymp{\mathrm{Symp}^{\delta}}
\newcommand\BdSymp{\mathrm{BSymp}^{\delta}}

\newcommand\BH{\mathrm{B}\Gamma_2^{\text{\textnormal{vol}}}}
\newcommand\dDiff{\mathrm{Diff}^{\delta}}

\newcommand\BdDiff{\mathrm{BDiff}^{\delta}}

\newcommand\Coker{\operatorname*{Coker}}
\newcommand\Ker{\operatorname*{Ker}}

\newcommand{\hcoker}{/\!\!/}
\newcommand{\hker}{\backslash\!\!\backslash}

\newcommand{\SnSn}{\#_g S^n \times S^n}

\newcommand{\W}{\text{W}_{g,1}}
\newcommand{\WW}{\text{\textnormal{W}}_{g,1}}

\addtocontents{toc}{\protect\setcounter{tocdepth}{1}}
\makeatletter
\let\c@equation\c@thm
\makeatother
\numberwithin{equation}{section}
\bibliographystyle{plain}

\title{On the moduli space of flat symplectic surface bundles}

\author{Sam Nariman}
\email{sam@math.northwestern.edu}
\address{Department of Mathematics\\
  Northwestern University\\
2033 Sheridan Road\\
Evanston, IL  60208}

\begin{document}

\begin{abstract}
In this paper, we prove homological stability of symplectomorphisms and {\it extended} hamiltonians of surfaces made discrete. Similar to discrete surface diffeomorphisms \cite{nariman2015stable}, we construct an isomorphism from the {\it stable} homology group of  symplectomorphisms and {\it extended} Hamiltonians of surfaces to the homology of certain infinite loop spaces.  We use these infinite loop spaces to study characteristic classes of surface bundles whose holonomy groups are area preserving, in particular we give a homotopy theoretic proof of the main theorem in \cite{kotschick2004characteristic}.
\end{abstract}

\maketitle


\section{Introduction and  statement of the main results}
The Madsen-Weiss theorem (\cite{madsen2007stable}) was not only so successful in describing the ``stable" invariants of the surface bundles, but also it laid out a method that could be generalized to higher dimensional manifold bundles (see \cite{galatius2012stable}). Kotschick and Morita in series of papers (see \cite{kotschick2004characteristic}, \cite{kotschick2005signatures} and \cite{kotschick2009gelfand}) studied the invariants of surface bundles whose holonomy groups lie in the symplectomorphisms of surfaces. Their calculations heavily relies on the theory of surfaces. The purpose of this paper which is a continuation of the work in  \cite{nariman2015stable, nariman2014homologicalstability} is to do Madsen-Weiss theory for surface bundles with certain geometric restrictions on the holonomy groups. In this approach the theory of surfaces will be hidden  in the homological stability results (see \Cref{thm1} and \Cref{thm2}). And as we shall see, not only this homotopy theoretic approach in the case of surface bundles whose holonomy groups are area preserving  recovers the main theorems in \cite{kotschick2004characteristic} and \cite{kotschick2005signatures} but also it translates the problems posed in \cite{kotschick2004characteristic} to a concrete problem related to the homotopy type of the Haefliger classifying space of the groupoids of germs of volume preserving diffeomorphisms (see \Cref{defn1}).
\subsection{Homological stability}
Let $\Sigma$ be a surface with or without boundary  and let $\omega_{\Sigma}$ be an area form on  $\Sigma$ whose total volume is normalized to be the negative of the Euler number. Let $\Diff(\Sigma,\partial)$ and $\Symp(\Sigma,\partial)$ denote respectively the group of orientation preserving diffeomorphisms and $\omega_{\Sigma}$-preserving diffeomorphisms of $\Sigma$ whose supports are away from the boundary. We denote the same groups with discrete topology by $\dDiff(\Sigma,\partial)$ and $\dSymp(\Sigma,\partial)$ respectively. The first main theorem
\begin{thm}\label{thm1}
The homology groups $H_*(\dSymp(\Sigma,\partial);\bZ)$ is independent of the genus $g(\Sigma)$ and the number of boundary components  if $*\leq (2g(\Sigma)-2)/3$. 
\end{thm}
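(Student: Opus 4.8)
The plan is to adapt the argument for discrete surface diffeomorphisms of \cite{nariman2015stable} to the area-preserving setting: compare $\BdSymp(\Sigma,\partial)$ with its topological analogue $\BSymp(\Sigma,\partial)$ through the evident fibration sequence, handle the base by classical surface theory, and handle the homotopy fibre by the symplectic Mather--Thurston theorem together with a scanning argument. \emph{Step 1: the topological group.} It suffices to treat a connected surface with a single boundary circle, together with the standard genus--raising inclusion $\Sigma_{g,1}\hookrightarrow\Sigma_{g+1,1}$; increasing the number of boundary components (gluing on a pair of pants along one circle) and capping off to reach closed surfaces are handled by the same formalism, so the stated independence follows by composing stabilisations. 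By Moser's theorem the space of area forms on $\Sigma$ with a fixed total volume and a fixed germ along $\partial\Sigma$ is convex, hence contractible, and $\Diff(\Sigma,\partial)$ acts on it with local sections and with stabiliser $\Symp(\Sigma,\partial)$; thus $\Symp(\Sigma,\partial)\hookrightarrow\Diff(\Sigma,\partial)$ is a homotopy equivalence, naturally in the stabilisation maps. Since $\Diff(\Sigma_{g,1},\partial)$ has contractible components, $\BSymp(\Sigma_{g,1},\partial)\simeq B\Gamma_{g,1}$, and the homological stability theorem for mapping class groups (Harer, Ivanov) gives that $\BSymp(\Sigma_{g,1},\partial)\to\BSymp(\Sigma_{g+1,1},\partial)$ is an isomorphism on $H_*(-;\bZ)$ for $*\leq(2g-2)/3$.

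\emph{Step 2: reduction to the homotopy fibre.} Let $\overline{\BdSymp}(\Sigma,\partial)$ be the homotopy fibre of $\BdSymp(\Sigma,\partial)\to\BSymp(\Sigma,\partial)$. Extending a symplectomorphism by the identity over the new handle (after a Moser isotopy making the area form standard near the gluing locus, so that the extension is again area-preserving) makes the genus--raising map a map of fibration sequences lying over $\BSymp(\Sigma_{g,1},\partial)\to\BSymp(\Sigma_{g+1,1},\partial)$. By the Serre spectral sequence comparison theorem (cf.\ the relative version used in \cite{nariman2015stable}) together with Step 1, it then suffices to prove that $\overline{\BdSymp}(\Sigma_{g,1},\partial)\to\overline{\BdSymp}(\Sigma_{g+1,1},\partial)$ is an isomorphism on $H_*(-;\bZ)$ for $*\leq(2g-2)/3$, and that the $\Gamma_{g,1}$-module structures on these homology groups are compatible in that range.

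\emph{Step 3: the homotopy fibre via Mather--Thurston and scanning.} By the symplectic analogue of the Mather--Thurston theorem (see \cite{nariman2015stable, nariman2014homologicalstability}, building on Thurston and McDuff), $\overline{\BdSymp}(\Sigma,\partial)$ is homology equivalent to the space of compactly supported sections, over the interior of $\Sigma$, of the bundle with fibre $\overline{\BH}$ associated to $T\Sigma$; here $\overline{\BH}=\mathrm{hofib}\!\left(\BH\to BSO(2)\right)$, and one uses that a germ of a symplectomorphism of $\bR^2$ is precisely a germ of an area-preserving diffeomorphism, so the relevant Haefliger space is $\BH$. Thurston's work shows that $\overline{\BH}$ is simply connected; this is the only input from foliation theory, and, crucially, it is far weaker than knowing the full homotopy type of $\BH$, which remains open. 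Given this connectivity, a scanning argument (Segal, McDuff; see \cite{nariman2015stable}) identifies the stabilisation map on these section spaces --- up to homology and in a range growing linearly with the genus --- with a map that is highly connected because the relevant fibre is simply connected, yielding the desired homology isomorphism for $*\leq(2g-2)/3$; the compatibility of $\Gamma_{g,1}$-actions needed in Step 2 is read off from the same section-space model.

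The main obstacle is Step 3: setting up the area-preserving Mather--Thurston identification correctly for surfaces with boundary and extracting from it a scanning statement with an explicit, linearly growing stability range. What makes this tractable --- rather than forcing one to understand the mysterious homotopy type of $\BH$ --- is that homological stability needs only the connectivity of $\overline{\BH}$, which foliation theory supplies; the eventual slope $(2g-2)/3$ is inherited from the classical stability range for $\Gamma_{g,1}$ in Step 1, the homotopy fibre contributing a strictly better range.
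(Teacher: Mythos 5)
Your Steps 1 and 2 set up the same fibration comparison that the paper uses implicitly, and your Step 3 correctly identifies McDuff's symplectic Mather--Thurston theorem as the tool for the homotopy fibre. But the key claim of Step 3 is false: the stabilisation map on the compactly supported section spaces $\mathrm{Sect}_0(\Sigma_{g,1},\partial)\to \mathrm{Sect}_0(\Sigma_{g+1,1},\partial)$ is \emph{not} a homology isomorphism in any range growing with $g$. Since $\Sigma_{g,1}$ is parallelisable, this section space is the pointed mapping space $\mathrm{Map}_{\partial}(\Sigma_{g,1},\overline{\BH})$, and because $\overline{\BH}\to K(\bR,2)$ is $3$-connected, obstruction theory gives a surjection $\pi_1(\mathrm{Sect}_0(\Sigma_{g,1},\partial))\twoheadrightarrow H^1(\Sigma_{g,1},\partial;\bR)\cong\bR^{2g}$. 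So $H_1$ of the homotopy fibre already grows with the genus, and no scanning argument can make the fibre-level map highly connected; simple connectivity of $\overline{\BH}$ controls $\pi_0$ of the section space (which is $\bR$, recorded by total volume), not the stability of its higher homology. Consequently the Serre spectral sequence comparison in your Step 2 cannot be run with constant coefficients: what one would need is stability of $H_p(\Gamma_{g,1};H_q(\mathrm{Sect}_0))$ for the nontrivial and $g$-dependent coefficient system $H_q(\mathrm{Sect}_0)$, and your proposal supplies no argument for that.

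The paper avoids this by never separating base and fibre. It produces a $\Symp(\Sigma,\partial)$-equivariant model of McDuff's map, so that $\BdSymp(\Sigma,\partial)$ becomes homology equivalent to the Borel construction $\mathcal{M}^{\theta}(\Sigma,\partial)=\mathrm{Bun}_{\partial}(T\Sigma,\theta^*\gamma)\hcoker\Symp(\Sigma,\partial)$, the moduli space of $\theta$-structures for $\theta:\BH\to\mathrm{BSL}_2(\bR)$. Randal-Williams' homological stability theorem for such moduli spaces is exactly the twisted-coefficient statement your Step 2 is missing; its only input is that $\pi_0(\mathcal{M}^{\theta}(\Sigma,\partial))$ stabilises, which is where the simple connectivity of $\overline{\BH}$ and the computation $\pi_0=H_2(\overline{\BH};\bZ)=\bR$ actually enter, and the slope $(2g-2)/3$ then comes from the $\theta$-structure being stabilised already at genus $0$ rather than from Harer stability applied to the base. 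Separately, your remark that closed surfaces are ``handled by the same formalism'' is too quick: capping off the last boundary component requires an extra argument (the paper uses a resolution by configurations of embedded volume-preserving disks), since the moduli-space machinery is set up for surfaces with boundary.
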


Let $\Symp_0(\Sigma,\partial)$ denote the identity component of $\Symp(\Sigma,\partial)$. It is a consequence of a theorem of Moser \cite{MR0182927} that $\Symp_0(\Sigma,\partial)$ is homotopy equivalent to $\Diff_0(\Sigma,\partial)$ which is known (see \cite{earle1969fibre, MR0277000}) to be contractible for $g\geq 2$. Recall that the flux homomorphism $$\text{Flux}: \Symp_0(\Sigma,\partial)\to H^1(\Sigma,\partial;\bR),$$ is a surjective homomorphism that is roughly described as follows. For an element $\phi\in \Symp_0(\Sigma,\partial)$, choose a path $\phi_t$ to the identity. Let   $\alpha$ be $1$-cycle in $\Sigma$, then  $\text{Flux}(\phi)(\alpha)$ is given by integrating $\omega_{\Sigma}$ on the $2$-chain $(s,t)\to \phi_t(\alpha(s))$. In this case, since $\Symp_0(\Sigma,\partial)$ is simply connected, the definition of $\text{Flux}$ does not depend on the path $\phi_t$. The group of Hamiltonians is defined to be the kernel of  $\text{Flux}$, hence they sit in a short exact sequence
\[
1\to \Ham(\Sigma,\partial)\to \Symp_0(\Sigma,\partial)\xrightarrow{\text{Flux}} H^1(\Sigma,\partial;\bR)\to 1.
\]
Morita and Kotschick proved in \cite{kotschick2005signatures} that the flux homomorphism can be extended to a crossed homomorphism
\[
\widetilde{\text{Flux}}: \Symp(\Sigma,\partial)\to H^1(\Sigma,\partial;\bR), 
\]
which is a map that instead of being homomorphism satisfies the identity
\[
\widetilde{\text{Flux}}(fg)=\widetilde{\text{Flux}}(g)+g^*\widetilde{\text{Flux}}(f),
\]
where $g^*$ denotes the action of $g$ on $H^1(\Sigma,\partial;\bR)$. Although $\widetilde{\text{Flux}}$ is not a group homomorphism, its kernel is a subgroup of $\Symp(\Sigma,\partial)$. This kernel is called {\it extended} Hamiltonians and we shall denote it by $\widetilde{\Ham}(\Sigma,\partial)$. The group of extended Hamiltonians is an enlargement of $\Ham(\Sigma,\partial)$ that intersects all the connected components of $\Symp(\Sigma,\partial)$ and sits in a short exact sequence
\[
1\to \Ham(\Sigma,\partial)\to \widetilde{\Ham}(\Sigma,\partial)\to \text{MCG}(\Sigma,\partial)\to 1,
\]
where $\text{MCG}(\Sigma,\partial)$ denotes the mapping class group of the surface $\Sigma$.  Kotschick and Morita in \cite[Theorem 6]{kotschick2004characteristic} proved that the group homology of the Hamiltonians is highly nontrivial and it is not stable with respect to the genus. We prove, however, that the group homology of  $\widetilde{\Ham}^{\delta}(\Sigma,\partial)$ is stable.
\begin{thm}\label{thm2}
Let $\Sigma$ be a surface with at least one boundary component, then the homology groups $H_*(\deHam(\Sigma,\partial);\bZ)$ is independent of the genus $g(\Sigma)$ and the number of boundary components  if $*\leq (2g(\Sigma)-2)/3$. 
\end{thm}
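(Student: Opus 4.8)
The plan is to deduce \Cref{thm2} from the homological stability of the mapping class group (Harer, Boldsen, Randal--Williams) and its twisted refinement, via a Mather--Thurston identification of the relevant homotopy fibre, in the same spirit as the proof of \Cref{thm1} but with the topological identity component $\Symp_0(\Sigma,\partial)$ replaced throughout by the Hamiltonian group $\Ham(\Sigma,\partial)$. Write $\Sigma_{g,b}$ for the genus $g$ surface with $b$ boundary components; as in \cite{nariman2014homologicalstability} and as in the mapping class group case, the independence of $b$ for $b\geq 1$ is handled by a separate gluing argument, so I concentrate on the genus stabilisation $\Sigma_{g,1}\hookrightarrow\Sigma_{g+1,1}$, realised by extending extended hamiltonians by the identity over the glued handle.

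The starting point is the extension of discrete groups $1\to\dHam(\Sigma,\partial)\to\deHam(\Sigma,\partial)\to\text{MCG}(\Sigma,\partial)\to 1$ together with its Hochschild--Serre spectral sequence
\[
E^2_{p,q}=H_p\!\left(\text{MCG}(\Sigma,\partial);\,H_q(\dHam(\Sigma,\partial);\bZ)\right)\Longrightarrow H_{p+q}(\deHam(\Sigma,\partial);\bZ),
\]
which is natural for the genus stabilisation. Since $H_*(\text{MCG}(\Sigma_{g,1});\bZ)$ is independent of $g$ for $*\leq(2g-2)/3$, it suffices to show that $g\mapsto H_q(\dHam(\Sigma_{g,1},\partial);\bZ)$ is a \emph{polynomial} coefficient system for the mapping class group groupoid, of degree at most $q$, and then to combine twisted homological stability with polynomial coefficients (Randal--Williams--Wahl, Boldsen) with a comparison of the two spectral sequences. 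Crucially, this coefficient system is very far from being constant: by \cite[Theorem 6]{kotschick2004characteristic} its individual values are not even stable in $g$. It is exactly the point of the argument that only polynomiality, not constancy, of the coefficient system is needed.

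To establish this polynomiality, I would identify the fibre term homotopically. The homotopy fibre of $\BdeHam(\Sigma,\partial)\to B\text{MCG}(\Sigma,\partial)$ is $\BdHam(\Sigma,\partial)$, and since $\Ham(\Sigma,\partial)$ is contractible for $g\geq 2$ (Moser \cite{MR0182927}, together with \cite{earle1969fibre,MR0277000} and the fact that the flux homomorphism is a fibration), $\BdHam(\Sigma,\partial)$ is the homotopy fibre of $\BdHam(\Sigma,\partial)\to B\Ham(\Sigma,\partial)$. The parametrised Mather--Thurston theorem for area-preserving diffeomorphisms (in the form used in \cite{nariman2015stable,nariman2014homologicalstability}) then identifies this homotopy fibre with a space of compactly supported sections of a bundle over $\mathrm{int}\,\Sigma_{g,1}$ whose fibre is the highly connected space $\overline{\BH}:=\mathrm{hofib}(\BH\to B\mathrm{SL}_2(\bR))$; as $\Sigma_{g,1}$ is homotopy equivalent to a $1$-complex the underlying $\mathrm{SL}_2(\bR)$-bundle is trivial, so
\[
\BdHam(\Sigma_{g,1},\partial)\simeq \mathrm{Map}_*\!\left(\Sigma_g,\overline{\BH}\right),
\]
the space of pointed maps out of the closed genus $g$ surface. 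From the cofibre sequence $\bigvee_{2g}S^1\hookrightarrow\Sigma_g\to S^2$ one obtains a fibration $\Omega^2\overline{\BH}\to\mathrm{Map}_*(\Sigma_g,\overline{\BH})\to(\Omega\overline{\BH})^{2g}$, whose Serre spectral sequence exhibits $H_q$ of the mapping space as a polynomial functor of $H_1(\Sigma_{g,1};\bZ)$ of degree $\leq q$, with functoriality in $g$ matching the stabilisation maps. This is the coefficient-system statement needed above.

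Feeding this into twisted homological stability for $\text{MCG}(\Sigma_{g,1})$ then shows that $E^2_{p,q}$ is independent of $g$ in a range of slope $2/3$ whose loss is linear in $q$, and a diagram chase in the induced map of spectral sequences gives independence of $H_n(\deHam(\Sigma_{g,1},\partial);\bZ)$ for $n\leq(2g-2)/3$. I expect the main obstacle to be twofold. First, one must establish the parametrised Mather--Thurston statement for $\Ham(\Sigma,\partial)$ in exactly the above shape, including the precise boundary/flux condition on the sections and the triviality of the coefficient bundle; this is where the geometry of area-preserving diffeomorphisms enters. Second, one needs the range bookkeeping that guarantees the loss caused by the polynomial degree of the coefficient system is \emph{exactly} absorbed, so that the conclusion comes out with the same constant $(2g-2)/3$ as in the mapping class group case and not something weaker. (One could instead try to use the crossed homomorphism $\widetilde{\text{Flux}}$, which realises $\deHam(\Sigma,\partial)$ as the stabiliser of $0$ for an affine action of $\dSymp(\Sigma,\partial)$ on $H^1(\Sigma,\partial;\bR)$ and hence, by Shapiro's lemma, gives $H_*(\deHam(\Sigma,\partial);\bZ)\cong H_*(\dSymp(\Sigma,\partial);\bZ[H^1(\Sigma,\partial;\bR)])$; but this coefficient module fails to be polynomial of finite degree, so \Cref{thm1} does not apply to it directly, and the route through $\text{MCG}(\Sigma,\partial)$ above seems preferable.)
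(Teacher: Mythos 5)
Your overall strategy (the Hochschild--Serre spectral sequence for $1\to\dHam(\Sigma,\partial)\to\deHam(\Sigma,\partial)\to\text{MCG}(\Sigma,\partial)\to1$ combined with twisted homological stability for polynomial coefficient systems) is genuinely different from the paper's, but the identification on which it rests is wrong, and not merely in a technical way. McDuff's theorem identifies the section space $\text{Sect}_0(\Sigma,\partial)\simeq\mathrm{Map}_{\partial,0}(\Sigma,\overline{\BH})$ with $\BdSymp_{0,\omega}(\Sigma,\partial)$, i.e.\ with the classifying space of the discrete \emph{identity component of the symplectomorphism group}, not with $\BdHam(\Sigma,\partial)$. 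The Hamiltonian group is the kernel of the flux, and correspondingly $\BdHam(\Sigma,\partial)$ is homology equivalent only to the homotopy fibre $\widetilde{\text{Sect}_0(\Sigma,\partial)}$ of the map $h:\text{Sect}_0(\Sigma,\partial)\to\mathrm{B}H^1(\Sigma,\partial;\bR)$ induced by $\bar v:\overline{\BH}\to K(\bR,2)$ (this is \Cref{mcduff1}); concretely, $\pi_1$ of your mapping space surjects onto $H^1(\Sigma,\partial;\bR)$ (the flux), whereas $\pi_1$ of the correct space is $\bR$ (the Calabi part). If you feed $\mathrm{Map}_*(\Sigma_g,\overline{\BH})$ into a spectral sequence over $\mathrm{B}\text{MCG}(\Sigma,\partial)$ you are computing $H_*(\dSymp(\Sigma,\partial);\bZ)$ and reproving \Cref{thm1}, not \Cref{thm2}. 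The entire geometric content of \Cref{thm2} is the flux condition that you set aside as an "obstacle": it replaces the fibre $\overline{\BH}$ by the homotopy fibre of $\bar v$, and hence changes the coefficient system your polynomiality analysis is applied to.

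Even after correcting the fibre, the two steps you flag as obstacles are exactly where the work lies, and your route makes them harder than necessary: you would need a $\Symp_{\omega}(\Sigma,\partial)$-equivariant point-set model of the corrected fibre sequence, a proof that $g\mapsto H_q(\widetilde{\text{Sect}_0(\Sigma_{g,1},\partial)})$ is polynomial in the precise categorical sense required by twisted stability theorems, and range bookkeeping absorbing the degree-dependent loss. The paper avoids all of this by packaging the flux condition into a new tangential structure $\beta:\widetilde{\BH}\to\mathrm{BSL}_2(\bR)$ with $\widetilde{\BH}=\mathrm{hofib}(e+v)$, proving (the Claim in the proof of \Cref{thm2}) that $\widetilde{\text{Sect}_0(\Sigma,\partial)}\simeq\text{Bun}_{\partial}(T\Sigma,\beta^*\gamma)$ equivariantly, and then observing that the resulting moduli space $\mathcal{M}^{\beta}(\Sigma,\partial)$ is \emph{connected}, so that Randal-Williams' untwisted stability theorem for tangential structures applies verbatim and yields the full mapping-class-group range with no twisted coefficients at all. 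If you want to salvage your approach, the honest version of your "polynomial coefficient system" is precisely the homology of $\text{Bun}_{\partial}(T\Sigma,\beta^*\gamma)$, at which point it is simpler to quote the tangential-structure theorem directly, as the paper does.
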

\subsection{The stable homology}To identify the stable homology of $\deHam(\Sigma,\partial)$ and $\dSymp(\Sigma,\partial)$, we first recall the definition of the classifying  space of codimension $2$ foliations with a transverse volume form. 
\begin{defn}\label{defn1}Let $\Gamma_2^{\text{vol}}$ denote the topological Haefliger groupoid whose objects are $\bR^2$ with the usual topology and the space of morphisms are  local symplectomorphisms of $\bR^2$ with respect to the standard symplectic form (see \cite{haefliger1971homotopy} for more details on how this groupoid is topologized). We shall write $\BH$ to denote its classifying space. \end{defn}There is a map 
\[
\theta: \BH\to \mathrm{BSL}_2(\bR),
\]
which is induced by the functor $\Gamma_2^{\text{vol}}\to \mathrm{SL}_2(\bR)$ that  sends a local diffeomorphism to its derivative at its source. We denote the homotopy fiber of $\theta$ by $\overline{\BH}$. Let $v\in H^2(\BH;\bR)$ be the standard transverse volume form for the universal  $\Gamma_2^{\text{vol}}$-structure on $\BH$ (cf. \cite{MR678355}). Let $e\in H^2(\BH;\bR)$ denote the Euler class of the normal bundle of the codimension $2$ Haefliger structure on $\BH$ which is the pullback of the generator of $H^2(\mathrm{BSL}_2(\bR);\bR)$ via the map $\theta$. The class $e+v$ induces a map
\[
e+v: \BH\to K(\bR,2).
\]
Let $\widetilde{\BH}$ denote the homotopy fiber of the above map. Thus, there is a homotopy commutative diagram
\begin{equation}\label{eq0}
\begin{tikzpicture}[node distance=2.2cm, auto]
  \node (A) {$\widetilde{\BH}$};
  \node (C) [right of=A, node distance=1.5cm] {$\BH$};
  \node (B) [right of=C] {$K(\bR,2)$};
  \node (D) [below of=C, node distance=1.5cm] {$\mathrm{BSL}_2(\bR),$};
  \draw [->] (A) to node {$$}(C);
    \draw [->] (C) to node {$e+v$}(B);
  \draw [->] (C) to node {$\theta$}(D);
  \draw [<-] (D) to node {$\beta$}(A);
\end{tikzpicture}
\end{equation}
where $\beta$ is the composition of the inclusion of the homotopy fiber and the map $\theta$.  We denote the homotopy fiber of $\beta$ by $\overline{\overline{\BH}}$. Let $\gamma$ be the tautological $2$-plane bundle over $\mathrm{BSL}_2(\bR)$. Let $\mathrm{MT}\theta$ and $\mathrm{MT}\beta$ denote the Thom spectrum of the virtual bundles $\theta^*(-\gamma)$ and $\beta^*(-\gamma)$ respectively.  Let $\Omega_{\bullet}^{\infty}\mathrm{MT}\theta$ and $\Omega_{\bullet}^{\infty}\mathrm{MT}\beta$ denote the base component of the infinite loop spaces associated to the Thom spectrum $\mathrm{MT}\theta$ and $\mathrm{MT}\beta$ respectively.
\begin{thm}\label{thm3}
There is a homotopy commutative diagram
\[
\begin{tikzpicture}[node distance=3.2cm, auto]
  \node (A) {$\mathrm{B}\deHam(\Sigma,\partial)$};
  \node (C) [below of=A, node distance=1.5cm] {$ \mathrm{BSymp}^{\delta}(\Sigma,\partial)$};
  \node (B) [right of=A] {$\Omega_{\bullet}^{\infty}\mathrm{MT}\beta$};
  \node (D) [below of=B, node distance=1.5cm] {$\Omega_{\bullet}^{\infty}\mathrm{MT}\theta,$};
  \draw [->] (C) to node {$$}(D);
   \draw [<-] (C) to node {$$}(A);
  \draw [<-] (D) to node {$$}(B);
  \draw [->] (A) to node {$$}(B);
\end{tikzpicture}
\]
where the horizontal maps are homology isomorphisms in the stable range as \Cref{thm1}.
\end{thm}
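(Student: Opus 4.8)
The plan is to prove \Cref{thm3} by transporting the Pontryagin--Thom strategy behind the Madsen--Weiss theorem \cite{madsen2007stable}, in the foliated form developed in \cite{nariman2015stable, nariman2014homologicalstability}, to the area--preserving setting: one replaces the discrete groups by moduli spaces of surfaces carrying a codimension $2$ Haefliger structure with transverse volume form, identifies the genus--stable homology of those moduli spaces with the homology of an infinite loop space built from a Thom spectrum, and invokes the homological stability results \Cref{thm1} and \Cref{thm2} to supply the stated range. To produce the maps, view $\theta\colon\BH\to\mathrm{BSL}_2(\bR)$ and $\beta\colon\widetilde{\BH}\to\mathrm{BSL}_2(\bR)$ as tangential structures for surfaces in the sense of cobordism categories: a $\theta$--structure on a surface bundle $\pi\colon E\to B$ is a bundle map $T^{v}E\to\theta^{*}\gamma$ covering a map $E\to\BH$, equivalently a codimension $2$ Haefliger $\Gamma_2^{\mathrm{vol}}$--structure on $E$ together with a volume--preserving identification of its normal bundle with the vertical tangent bundle (and when this structure is regular it is a foliation transverse to the fibres, i.e.\ a flat structure); a $\beta$--structure is the same datum enriched by a nullhomotopy of the resulting class $(e+v)|_{E}$. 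A flat symplectic surface bundle is in particular $\theta$--structured, via the fibrewise area form $\omega_{\Sigma}$ as transverse volume; and since the normalisation $\int_{\Sigma}\omega_{\Sigma}=-\chi(\Sigma)$ forces $e+v$ to vanish on the fibres, a flat bundle with discrete structure group $\widetilde{\Ham}(\Sigma,\partial)$ is $\beta$--structured. This produces maps $\BdSymp(\Sigma,\partial)\to\mathcal M^{\theta}(\Sigma)$ and $\BdeHam(\Sigma,\partial)\to\mathcal M^{\beta}(\Sigma)$ to the moduli spaces of $\theta$-- and $\beta$--structured surfaces diffeomorphic to $\Sigma$ rel a fixed structure near the boundary, which together with the inclusion $\widetilde{\Ham}\hookrightarrow\Symp$ and the map of tangential structures $\widetilde{\BH}\to\BH$ over $\mathrm{BSL}_2(\bR)$ assemble into a homotopy commutative square.

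\emph{Mather--Thurston.} The first key step is a parametrised, area--preserving Mather--Thurston theorem: the two maps just constructed are homology isomorphisms in every degree. Since both sides fibre over $\BDiff(\Sigma,\partial)$ (using Moser's theorem to replace $\Symp$ by $\Diff$ on the topological level), this reduces to comparing, over a fixed surface bundle, the space of flat structures with the space of $\Gamma_2^{\mathrm{vol}}$--structures refining the vertical tangent bundle --- a section space with fibre $\overline{\BH}$, respectively $\overline{\overline{\BH}}$ --- which is a fibrewise, volume--preserving instance of the Mather--Thurston equivalence between $\BdDiff_{c}$ of an open manifold and the corresponding Haefliger section space. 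What is required is the area--preserving refinement of the theorems of Mather and Thurston, in the spirit of McDuff's analysis of the homology of volume--preserving diffeomorphism groups, carried out rel boundary.

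\emph{Scanning and assembly.} Next, organise the $\theta$--structured (resp.\ $\beta$--structured) surfaces into a cobordism category and apply the Galatius--Madsen--Tillmann--Weiss theorem in its tangential--structure form: the group completion of the monoid $\coprod_{g}\mathcal M^{\theta}(\Sigma_{g,1})$ under boundary connected sum is $\Omega^{\infty}\mathrm{MT}\theta$, and likewise for $\beta$. Because \Cref{thm1} and \Cref{thm2} provide homological stability for $\BdSymp(\Sigma_{g,1},\partial)$ and $\BdeHam(\Sigma_{g,1},\partial)$ --- hence, through the Mather--Thurston isomorphisms of the previous step, for $\mathcal M^{\theta}(\Sigma_{g,1})$ and $\mathcal M^{\beta}(\Sigma_{g,1})$ --- the group--completion theorem upgrades these identifications to homology isomorphisms $\mathcal M^{\theta}(\Sigma_{g,1})\to\Omega^{\infty}_{\bullet}\mathrm{MT}\theta$ and $\mathcal M^{\beta}(\Sigma_{g,1})\to\Omega^{\infty}_{\bullet}\mathrm{MT}\beta$ in the range $*\le(2g-2)/3$. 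Composing with the Mather--Thurston isomorphisms gives the horizontal maps of \Cref{thm3}, and their compatibility with the vertical maps follows from naturality in $\widetilde{\BH}\to\BH$, which induces $\mathrm{MT}\beta\to\mathrm{MT}\theta$; independence of the number of boundary components is again supplied by \Cref{thm1} and \Cref{thm2}.

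\emph{Main obstacle.} The technical heart is the area--preserving parametrised Mather--Thurston theorem of the second step: the classical proofs must be adapted to the greater rigidity of the area--preserving category, tracking the transverse volume form through the semi--simplicial resolutions and through the local model around $\BdSymp(D^{2},\partial)$, and carried out rel boundary so as to be compatible with the gluing maps defining the cobordism category and with the group--completion argument. A second, more bookkeeping, subtlety is the identification in the first step of extended--Hamiltonian bundles with $\beta$--structured ones: using the Kotschick--Morita crossed homomorphism $\widetilde{\mathrm{Flux}}$ of \cite{kotschick2005signatures} together with the normalisation of $\omega_{\Sigma}$, one must verify that $\widetilde{\Ham}(\Sigma,\partial)$ is precisely the subgroup of $\Symp(\Sigma,\partial)$ whose flat bundles admit the required nullhomotopy of $e+v$ on the total space, so that the left vertical map of \Cref{thm3} genuinely lifts through $\mathrm{MT}\beta$.
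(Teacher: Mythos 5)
Your proposal follows the same route as the paper: identify $\BdSymp(\Sigma,\partial)$ with the moduli space $\mathcal{M}_0^{\theta}(\Sigma,\partial)$ of $\theta$-structures (resp.\ $\BdeHam(\Sigma,\partial)$ with $\mathcal{M}^{\beta}(\Sigma,\partial)$) via McDuff's area-preserving Mather--Thurston theorem made $\Symp$-equivariant, then apply Galatius--Madsen--Tillmann--Weiss together with the homological stability of \Cref{thm1} and \Cref{thm2}. You correctly flag the two genuine technical points (the equivariant form of McDuff's theorem and the flux/normalization computation identifying $\deHam$-bundles with $\beta$-structures), both of which the paper handles in the course of proving \Cref{thm1} and \Cref{thm2}, so \Cref{thm3} then follows by the ``standard argument'' exactly as you outline.
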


\subsection{Capping off the last boundary component}As we shall see in \Cref{closedsurface}, for symplectomorphisms capping off the last boundary component of a surface also induces homology isomorphisms up to the same range as \Cref{thm1}. In other words for an embedding of a closed $2$-disk $D^2$ into a closed surface $\Sigma$, let $\dSymp(\Sigma, \text{rel } D^2)$ denote those symplectomorphisms whose supports are away from the embedded disk. 
\begin{thm}\label{cappingoffsymp}
The inclusion $\dSymp(\Sigma, \text{rel } D^2)\to \dSymp(\Sigma)$ induces a the map
\[
H_*(\BdSymp(\Sigma, \text{rel } D^2);\bZ)\to H_*(\BdSymp(\Sigma);\bZ),
\]
which is an isomorphism in the same range as \Cref{thm1}.
\end{thm}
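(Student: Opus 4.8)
The plan is to follow the strategy used for discrete surface \emph{diffeomorphism} groups in \cite{nariman2015stable}, which is a discrete-group incarnation of Harer's argument for capping off the last boundary component of a surface. Write $\Sigma=\Sigma_g$ for the closed genus-$g$ surface, so that $\dSymp(\Sigma,\text{rel } D^2)$ is identified with the group of compactly supported area-preserving diffeomorphisms of $\Sigma_{g,1}=\Sigma\setminus\mathrm{int}(D^2)$; by \Cref{thm1} the source of the map in question already has the stable homology in the range $*\le(2g-2)/3$, and what has to be shown is that forgetting the disk does not change homology there.

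It is worth recording why the obvious comparison does not suffice. By Moser's theorem \cite{MR0182927} the corresponding map for the topological groups, $\BSymp(\Sigma_{g,1},\partial)\to\BSymp(\Sigma)$, is the classical capping map of mapping class groups and hence a homology isomorphism in the stable range; moreover the discrete-to-topological comparison maps sit in homotopy fiber sequences whose fibers are section spaces of $\overline{\BH}$-bundles over $\Sigma_{g,1}$ and $\Sigma$, exactly as in the proof of \Cref{thm3}. But restricting sections to the capping disk exhibits the induced map of these fibers as the inclusion of the homotopy fiber of a fibration with base $\overline{\BH}$, so this route would only prove the statement in degrees below the connectivity of $\overline{\BH}$ --- the very homotopy-theoretic problem this paper leaves open. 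A direct homological stability argument is therefore needed.

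For this I would use a chain of highly connected semi-simplicial resolutions, by complexes of embedded arcs, relating $\BdSymp(\Sigma,\text{rel } D^2)$ to $\BdSymp(\Sigma)$ through the intermediate symplectomorphism groups that fix the centre $*$ of $D^2$, and of that point together with a symplectic frame there. Since arcs are one-dimensional there is no area-preserving constraint on them, so these arc complexes, and the fact that their realizations are highly connected through a range growing linearly in $g$, are exactly as in the smooth case of \cite{nariman2015stable}. Using the symplectic isotopy extension theorem together with the contractibility of the components of spaces of arc systems in surfaces, one identifies, level by level, the homotopy quotient of the space of $p$-simplices by the relevant discrete symplectomorphism group with a disjoint union of classifying spaces $\BdSymp(\Sigma',\partial)$ of area-preserving diffeomorphism groups of the subsurfaces $\Sigma'$ obtained by cutting along the arc system; crucially, every such $\Sigma'$ has non-empty boundary, so its homology is governed by \Cref{thm1}. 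In the spectral sequences of the skeletal filtrations, \Cref{thm1} makes all the groups $H_*(\dSymp(\Sigma',\partial))$ appearing in total degree $\le(2g-2)/3$ canonically identified and all face maps isomorphisms there, so the spectral sequences collapse onto the stable homology; the high connectivity of the arc complexes identifies the abutments with $H_*(\dSymp(\Sigma,\text{rel } D^2))$, with $H_*(\dSymp(\Sigma))$, and with the homologies of the intermediate groups. Naturality in the capping map then gives the theorem.

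The main obstacle is the bookkeeping just sketched: showing that the homotopy quotient of the arc-system \emph{space} by the \emph{discrete} group $\dSymp$ genuinely assembles, level by level, out of classifying spaces of honest area-preserving diffeomorphism groups of the cut-open pieces. This is precisely where isotopy extension has to be combined with the contractibility of components of arc spaces, and it is the technical heart of the corresponding argument in \cite{nariman2015stable}. A secondary point: cutting along arcs raises the Euler characteristic, so the cut-open pieces $\Sigma'$ carry an area form of total volume $-\chi(\Sigma)$ rather than the normalization $-\chi(\Sigma')$ used in \Cref{thm1}; one needs that homological stability in a form insensitive to the choice of area form, which the proof of \Cref{thm1} should supply.
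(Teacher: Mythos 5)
Your overall strategy --- resolve both groups by a highly connected semisimplicial object whose stabilizers are symplectomorphism groups of surfaces with non-empty boundary, then feed \Cref{thm1} into a levelwise comparison of spectral sequences --- is exactly the paper's strategy, but the complex you choose is genuinely different and considerably heavier than the one the paper uses. The paper adapts the \emph{disk resolution} of \cite[Section 11.2]{randal2009resolutions}: the semisimplicial set $E_\bullet(\Sigma)$ has as $p$-simplices the \emph{germs} of volume-preserving embeddings of $p+1$ disjoint closed disks, taken with the \emph{discrete} topology. Its realization is weakly contractible by an elementary coning argument (any compact family of simplices misses some disk, which can then be added to every simplex in sight); transitivity of the $\dSymp(\Sigma)$-action on such germs follows from the volume-preserving isotopy extension theorem of Krygin; and Shapiro's lemma identifies $E_p(\Sigma)\hcoker\dSymp_{\omega}(\Sigma)$ with $\BdSymp_{c,\omega}(\Sigma(p))$, the compactly supported symplectomorphisms of the $(p+1)$-punctured surface, which is homology equivalent to $\BdSymp_{\omega}$ of a surface with $p+1$ boundary components. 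The levelwise map is then an isomorphism in the stable range by \Cref{thm1}, and the theorem follows. This buys three things your arc-complex route does not: no connectivity theorem for arc complexes is needed (the disk complex is outright contractible, so there is no range restriction coming from the resolution itself); no chain of intermediate groups fixing a point and a symplectic frame is needed (arcs require endpoints on marked points or boundary, which is why your route must introduce them, whereas disks embed directly into the closed surface); and the ``technical heart'' you flag --- identifying the homotopy quotient of a \emph{space} of simplices by a \emph{discrete} group with classifying spaces of stabilizers --- is dissolved at the outset by taking germs with the discrete topology, so that orbits really are discrete and Shapiro's lemma applies verbatim. Your route could probably be pushed through, and your remark about the area form not being normalized on the cut-open pieces is a correct and necessary observation (the paper's reformulation of \Cref{thm1} for arbitrary pairs $(\Sigma,\omega)$ supplies exactly this), but as written the proposal defers its hardest steps --- connectivity of the discretized arc complexes, the levelwise identifications, and the passage through the framed-point group --- to analogues of \cite{nariman2015stable} that are not actually needed for this statement.
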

 However, for extended Hamiltonians, we show that 
\[
H_*(\mathrm{B}\widetilde{\mathrm{Ham}}^{\delta}(\Sigma, \text{rel }D^2);\bZ)\to H_*(\mathrm{B}\widetilde{\mathrm{Ham}}^{\delta}(\Sigma);\bZ),
\]
cannot be an isomorphism in any range, in fact we show that $\widetilde{\mathrm{Ham}}^{\delta}(\Sigma, \text{rel }D^2)$ and $\widetilde{\mathrm{Ham}}^{\delta}(\Sigma)$ have different $H_1$ and $H_2$.

Nonetheless, for a closed surface $\Sigma$, we shall  describe below the difference between the homology of $\mathrm{B}\deHam(\Sigma)$ and the homology of $\mathrm{B}\deHam(\Sigma,\text{rel }D^2)$ in the same range as \Cref{thm1}. It is well known that the classifying space of an abelian group inherits the structure of a topological abelian group.  In particular  $\mathrm{B}\bR^{\delta}$ is a topological group and we shall show that it acts on $\mathrm{MT}\beta$ and the homotopy quotient of this action $\mathrm{B}\bR^{\delta}\hker\Omega^{\infty}\mathrm{MT}\beta$ describes the homology of $\mathrm{B}\deHam(\Sigma)$ in a range.
\begin{thm}\label{thm3'}
For a closed surface $\Sigma$, there is a homotopy commutative diagram
\[
\begin{tikzpicture}[node distance=3.2cm, auto]
  \node (A) {$\mathrm{B}\deHam(\Sigma,\text{\textnormal{rel} }D^2))$};
  \node (C) [below of=A, node distance=1.5cm] {$ \mathrm{B}\deHam(\Sigma)$};
  \node (B) [right of=A] {$\Omega^{\infty}\mathrm{MT}\beta$};
  \node (D) [below of=B, node distance=1.5cm] {$ \mathrm{B}\bR^{\delta}\hker\Omega^{\infty}\mathrm{MT}\beta,$};
  \draw [->] (C) to node {$$}(D);
   \draw [<-] (C) to node {$$}(A);
  \draw [<-] (D) to node {$$}(B);
  \draw [->] (A) to node {$$}(B);
\end{tikzpicture}
\]
where the horizontal maps in the same range as \Cref{thm1} induce  homology isomorphisms onto the connected components that they hit (see \Cref{thm10} for a geometric meaning of this theorem).
\end{thm}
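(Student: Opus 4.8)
The plan is to bootstrap from \Cref{thm3} applied to the bounded surface $\Sigma_{0}:=\Sigma\setminus\mathrm{int}(D^{2})$. Since $\deHam(\Sigma,\text{rel }D^{2})\cong\deHam(\Sigma_{0},\partial)$, the top horizontal map and the left vertical map of the square are exactly those of \Cref{thm3}; so what remains is to construct the bottom map and to show that it is a homology isomorphism onto the component it hits in the range of \Cref{thm1}.

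First I would make the $\mathrm{B}\bR^{\delta}$-action precise. Since $\widetilde{\BH}$ is by definition the homotopy fibre of $e+v\colon\BH\to K(\bR,2)$, and $\mathrm{B}\bR^{\delta}\simeq\Omega K(\bR,2)$, there is a fibre sequence $\mathrm{B}\bR^{\delta}\to\widetilde{\BH}\to\BH$; as $\mathrm{B}\bR^{\delta}$ is a topological abelian group this is a principal $\mathrm{B}\bR^{\delta}$-bundle, giving an action of $\mathrm{B}\bR^{\delta}$ on $\widetilde{\BH}$ with homotopy quotient $\BH$. As $\beta^{*}(-\gamma)$ is pulled back along $\widetilde{\BH}\to\BH$ from $\theta^{*}(-\gamma)$, this action lifts canonically to the Thom spectrum $\mathrm{MT}\beta$, with $\mathrm{B}\bR^{\delta}\hker\mathrm{MT}\beta\simeq\mathrm{MT}\theta$. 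Consequently $\mathrm{B}\bR^{\delta}\hker\Omega^{\infty}\mathrm{MT}\beta$ sits in a fibration $\Omega^{\infty}\mathrm{MT}\beta\to\mathrm{B}\bR^{\delta}\hker\Omega^{\infty}\mathrm{MT}\beta\to\mathrm{B}(\mathrm{B}\bR^{\delta})\simeq K(\bR,2)$.

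Next I would produce the bottom map by running the scanning/Pontryagin--Thom construction of \Cref{thm3} on the universal flat symplectic $\Sigma$-bundle $\pi\colon E\to\mathrm{B}\deHam(\Sigma)$, while retaining one extra characteristic class. For a $\deHam(\Sigma)$-bundle the extended-Hamiltonian condition forces the $E^{1,1}_{\infty}$-part of $e+v\in H^{2}(E;\bR)$ to vanish (this is the vanishing of the flux), and the normalisation $\int_{\Sigma}\omega=-\chi(\Sigma)$ forces $e+v$ to restrict to $0$ on each fibre; but for a \emph{closed} fibre $H^{0}(\Sigma;\bR)=\bR\neq0$, so --- using that $\mathrm{Sp}_{2g}(\bZ)$ has no invariants on $H^{1}(\Sigma;\bR)$ --- the class $e+v$ descends to a well-defined $c\in H^{2}(\mathrm{B}\deHam(\Sigma);\bR)$ with $\pi^{*}c=e+v$. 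This is exactly the term that is absent over surfaces with boundary, where $H^{0}(\Sigma_{0},\partial)=0$. Viewing $c$ as a map $\mathrm{B}\deHam(\Sigma)\to K(\bR,2)$, the scanning construction refines to a map $\mathrm{B}\deHam(\Sigma)\to\mathrm{B}\bR^{\delta}\hker\Omega^{\infty}\mathrm{MT}\beta$ lying over $c$, which restricted to $\mathrm{B}\deHam(\Sigma,\text{rel }D^{2})$ --- where the trivial disc sub-bundle makes $e+v$ exact, so $c$ is null --- recovers the map of \Cref{thm3} into the fibre $\Omega^{\infty}\mathrm{MT}\beta$; this makes the square homotopy commutative.

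For the homology statement, both $\mathrm{B}\deHam(\Sigma)$ and $\mathrm{B}\bR^{\delta}\hker\Omega^{\infty}\mathrm{MT}\beta$ fibre over $K(\bR,2)$ compatibly, so by a Zeeman comparison it suffices to show that the induced map of homotopy fibres $\mathrm{B}\Lambda\to\Omega^{\infty}\mathrm{MT}\beta$ is a homology isomorphism onto its component in the range, where $\Lambda$ denotes the central extension $1\to\bR^{\delta}\to\Lambda\to\deHam(\Sigma)\to1$ classified by $c$. Since $c$ restricts to zero on $\deHam(\Sigma,\text{rel }D^{2})$ one obtains a lift $\deHam(\Sigma,\text{rel }D^{2})\to\Lambda$ (canonical only up to an $\bR$-torsor of choices), and chasing the commuting square reduces the claim, via \Cref{thm3} applied to $\Sigma_{0}$, to: \emph{this lift is a homology isomorphism in the range of \Cref{thm1}}. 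I expect this to be the main obstacle. It is the $\deHam$-analogue of \Cref{cappingoffsymp}, but --- in contrast to symplectomorphisms --- capping off the last boundary component is \emph{not} a homology isomorphism for extended Hamiltonians, the whole discrepancy being the central $\bR^{\delta}$. The reason only one copy of $\bR$ appears is that on $\Symp(\Sigma_{0},\partial)$ the restriction of Kotschick--Morita's extended flux $\widetilde{\mathrm{Flux}}_{\Sigma}$ and the flux $\widetilde{\mathrm{Flux}}_{\Sigma_{0}}$ agree on the identity component and differ by a crossed homomorphism whose class spans the one-dimensional group $H^{1}(\text{MCG}(\Sigma_{0},\partial);H^{1}(\Sigma;\bR))$; this $\bR$, which I would identify with $c$, is what $\mathrm{B}\bR^{\delta}$ records. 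A coset-space comparison with \Cref{cappingoffsymp} identifies $H_{*}(\deHam(\Sigma)\cap\dSymp(\Sigma,\text{rel }D^{2}))$ with $H_{*}(\deHam(\Sigma))$ in the range, after which one must compare the two extensions of $\text{MCG}(\Sigma_{0},\partial)$ by $\Ham(\Sigma_{0},\partial)$ given by $\deHam(\Sigma_{0},\partial)$ and $\deHam(\Sigma)\cap\dSymp(\Sigma,\text{rel }D^{2})$ and match the resulting $\bR$ with $\Lambda$ --- the step where the fragmentation/Mather--Thurston description of symplectomorphisms supported near the disc (whose Calabi invariant supplies this $\bR$) re-enters and where \Cref{thm2} is used to keep the range uniform in $g$.
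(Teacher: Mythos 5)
The first half of your proposal is on the right track and roughly parallels the paper: the $\mathrm{B}\bR^\delta$-action on $\widetilde{\BH}$ fixing $\beta$ and hence on $\mathrm{MT}\beta$, the fibration $\Omega^\infty\mathrm{MT}\beta\to \mathrm{B}\bR^\delta\hker\Omega^\infty\mathrm{MT}\beta\to K(\bR,2)$, and the descent of $e+v$ to a class $c\in H^2(\mathrm{B}\deHam(\Sigma);\bR)$ with $\pi^*c=e+v$ (this is precisely the class $a$ appearing in the paper's proof of \Cref{thm10}). The identification of the square's top and left edges with those of \Cref{thm3} applied to $\Sigma_0$ is also correct.

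The second half is where the proposal breaks down. You reduce the theorem, via a Zeeman comparison of the two $K(\bR,2)$-fibrations, to the statement that the lift $\deHam(\Sigma,\text{rel }D^2)\to\Lambda$ of the central extension classified by $c$ is a homology isomorphism in the stable range. But that statement is essentially \Cref{thm10}, which the paper proves \emph{as a consequence} of \Cref{thm3'}, not the other way around; so the reduction is circular unless you can prove it independently. Your sketch of an independent proof is both too vague and contains an error: the assertion that ``a coset-space comparison with \Cref{cappingoffsymp} identifies $H_*(\deHam(\Sigma)\cap\dSymp(\Sigma,\text{rel }D^2))$ with $H_*(\deHam(\Sigma))$ in the range'' directly contradicts \Cref{rem1}, which shows that $\deHam(\Sigma,\text{rel }D^2)$ and $\deHam(\Sigma)$ already differ in $H_1$ and $H_2$ (precisely because $\kappa_1$ dies on one side and not the other, via Bowden's analysis of the Calabi class). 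That discrepancy is exactly the $\bR^\delta$ you are trying to account for, so it cannot be bootstrapped away by a \Cref{cappingoffsymp}-style coset argument. The remainder of the sketch (``compare the two extensions ... match the resulting $\bR$ with $\Lambda$ ... fragmentation/Mather--Thurston ... \Cref{thm2}'') does not yet constitute an argument.

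What the paper actually does is apply McDuff's theorem (\Cref{mcduff1}) directly to the closed surface, where the crucial new feature is that $\mathrm{Map}_0(\Sigma,K(\bR,2))$ is no longer equivalent to $\mathrm{B}{H^1_\bR}^\delta$ but rather fits in a fibration $K(\bR,2)\to \mathrm{Map}_0(\Sigma,K(\bR,2))\to \mathrm{B}{H^1_\bR}^\delta$. The paper then runs a $3\times 3$ diagram of fibrations to identify the relevant section space $X$ (the homotopy fibre of $p\circ(-\circ(e+v))$) with $\mathrm{B}\bR^\delta\hker\mathrm{Bun}(T\Sigma,\beta^*\gamma)$, checks that the $\Symp(\Sigma)$-action and the $\mathrm{B}\bR^\delta$-action commute, takes $\Symp(\Sigma)$-homotopy quotients, and invokes GMTW together with the homological stability of $\mathcal{M}^\beta$. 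None of this appears in your proposal; you have instead substituted for McDuff's closed-surface result an unproved group-theoretic comparison that, in the form stated, is false.
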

\begin{cor}\label{cor1}
The map induced by capping off the last boundary component
\[
H_*(\mathrm{B}\deHam(\Sigma,\text{\textnormal{rel} }D^2);\bF_p)\to H_*(\mathrm{B}\deHam(\Sigma);\bF_p),
\]
is an isomorphism on homology with finite coefficients in the stable range.
\end{cor}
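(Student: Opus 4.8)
The plan is to deduce \Cref{cor1} formally from \Cref{thm3'}, the point being that the homotopy quotient by $\mathrm{B}\bR^{\delta}$ is invisible to $\bF_p$-homology, even though it is very visible rationally (which is exactly why the integral statement recorded just before \Cref{thm3'} fails). First I would identify the right-hand vertical map of the square in \Cref{thm3'},
\[
\iota\colon \Omega^{\infty}\mathrm{MT}\beta \longrightarrow \mathrm{B}\bR^{\delta}\hker\Omega^{\infty}\mathrm{MT}\beta,
\]
with the inclusion of the fibre of the Borel fibration $\Omega^{\infty}\mathrm{MT}\beta \to \mathrm{B}\bR^{\delta}\hker\Omega^{\infty}\mathrm{MT}\beta \to \mathrm{B}(\mathrm{B}\bR^{\delta})$, and observe that $\mathrm{B}(\mathrm{B}\bR^{\delta})\simeq K(\bR^{\delta},2)$ since $\mathrm{B}\bR^{\delta}$ is a topological abelian group.

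The one step with any content is the observation that $\bR^{\delta}$, as an abstract abelian group, is a $\bQ$-vector space, so $K(\bR^{\delta},2)$ is simply connected with reduced integral homology a $\bQ$-vector space in every degree; by the universal coefficient theorem $\widetilde{H}_*(K(\bR^{\delta},2);M)=0$ for every $\bF_p$-module $M$. Since the base of the Borel fibration is simply connected, the $\bF_p$-homology Serre spectral sequence has untwisted coefficients and $E^2_{s,t}=H_s(K(\bR^{\delta},2);H_t(\Omega^{\infty}\mathrm{MT}\beta;\bF_p))$ is concentrated on the fibre axis $s=0$; it therefore collapses, and the edge homomorphism shows that $\iota$ induces an $\bF_p$-homology isomorphism. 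Restricting the fibration over $\mathrm{B}(\mathrm{B}\bR^{\delta})$ to a single path component of its total space gives again a fibration with connected base whose fibre is a single path component of $\Omega^{\infty}\mathrm{MT}\beta$ (because $\pi_0$ of fibre and total space agree, the base being simply connected), so the same argument shows that $\iota$ is a $\pi_0$-bijection which restricts to an $\bF_p$-homology isomorphism on each path component.

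Then I would chase the square of \Cref{thm3'}. Its left-hand corners $\mathrm{B}\deHam(\Sigma,\text{rel }D^2)$ and $\mathrm{B}\deHam(\Sigma)$ are connected, being classifying spaces of groups, so the horizontal maps $a\colon \mathrm{B}\deHam(\Sigma,\text{rel }D^2)\to\Omega^{\infty}\mathrm{MT}\beta$ and $b\colon \mathrm{B}\deHam(\Sigma)\to\mathrm{B}\bR^{\delta}\hker\Omega^{\infty}\mathrm{MT}\beta$ each land in one path component, say $Z_1$ and $Z_2$, and homotopy commutativity $b\circ c\simeq\iota\circ a$, with $c$ the capping-off map, forces $Z_2$ to be the component containing $\iota(Z_1)$. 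By \Cref{thm3'}, on the range $*\leq (2g(\Sigma)-2)/3$ the maps $a$ and $b$ induce isomorphisms onto the homology of $Z_1$ and $Z_2$, hence $\bF_p$-homology isomorphisms by universal coefficients, while $\iota\colon Z_1\to Z_2$ is an $\bF_p$-homology isomorphism in all degrees by the previous step; therefore $c$ induces an isomorphism $H_*(\mathrm{B}\deHam(\Sigma,\text{rel }D^2);\bF_p)\to H_*(\mathrm{B}\deHam(\Sigma);\bF_p)$ on that range.

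Once \Cref{thm3'} is available the argument is essentially formal, so I do not anticipate a genuine obstacle; the only point requiring care is the bookkeeping of path components, since neither $\Omega^{\infty}\mathrm{MT}\beta$ nor its homotopy quotient is connected, and one must match the component hit by $b$ with the $\iota$-image of the component hit by $a$ — which, as noted, is forced by the connectedness of the classifying spaces. A harmless alternative, if one wishes to sidestep this, is to argue one component at a time.
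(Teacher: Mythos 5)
Your argument is correct and is essentially the paper's own proof: both reduce to showing the fibre inclusion $\Omega^{\infty}\mathrm{MT}\beta\to \mathrm{B}\bR^{\delta}\hker\Omega^{\infty}\mathrm{MT}\beta$ is an $\bF_p$-homology isomorphism, by noting that the base $K(\bR,2)$ of the Borel fibration has uniquely divisible (hence $\bF_p$-trivial) reduced homology, so the Serre spectral sequence collapses onto the fibre axis. Your bookkeeping of path components is a small extra care the paper leaves implicit, but the route is the same.
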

Note that for closed surfaces $\Sigma$ and $\Sigma'$, there is no comparison map from $\mathrm{B}\deHam(\Sigma)$ to $\mathrm{B}\deHam(\Sigma')$, but using \Cref{cor1}, one can find a zig-zag of isomorphisms between $\bF_p$-homology of $\mathrm{B}\deHam(\Sigma)$ and $\mathrm{B}\deHam(\Sigma')$ in the stable range of the surface with lower genus. 

For homology with $\bQ$-coefficients, however, we use a different zig-zag of isomorphisms to show that  
\begin{thm}\label{cor2'}
 The groups $H_*(\mathrm{B}\deHam(\Sigma);\bQ)$ and $H_*(\mathrm{B}\deHam(\Sigma');\bQ)$ are isomorphic for $*\leq \text{min}((2g(\Sigma)-2)/3,(2g(\Sigma')-2)/3)$.
\end{thm}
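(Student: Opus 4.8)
The plan is to treat $\Sigma$ and $\Sigma'$ as closed surfaces of genera $g\le g'$ (for surfaces with boundary the statement is already contained in \Cref{thm2}) and to work throughout in degrees $*\le(2g-2)/3=\min((2g-2)/3,(2g'-2)/3)$. The capping-off zig-zag behind \Cref{cor1} is of no use here, because over $\bQ$ capping off is not a homology isomorphism: by \Cref{thm3'} its homotopy-theoretic avatar is the map from $\Omega^{\infty}\mathrm{MT}\beta$ to $\mathrm{B}\bR^\delta\hker\Omega^{\infty}\mathrm{MT}\beta$, and $H_*(\mathrm{B}\bR^\delta;\bQ)\cong\Lambda^{*}_{\bQ}(\bR)$ is very far from trivial. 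Instead I would compare the two closed surfaces directly through \Cref{thm3'}; the point is that although the correction factor $\mathrm{B}\bR^\delta$ contributes a great deal of rational homology, it contributes the \emph{same} rational homology for every genus.

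First, \Cref{thm3'} gives, in degrees $*\le(2g-2)/3$, a homology isomorphism from $\mathrm{B}\deHam(\Sigma)$ onto the path component $C_g$ of $\mathrm{B}\bR^\delta\hker\Omega^{\infty}\mathrm{MT}\beta$ that it hits, and likewise for $\Sigma'$ onto a component $C_{g'}$ in degrees $*\le(2g'-2)/3$. These components are genuinely distinct: the map to $\pi_0(\mathrm{B}\bR^\delta\hker\Omega^{\infty}\mathrm{MT}\beta)\cong\pi_0\mathrm{MT}\beta$ records the fibrewise Euler number $2-2g$. Thus the theorem reduces to the purely homotopy-theoretic statement, carrying no range restriction, that every path component of $\mathrm{B}\bR^\delta\hker\Omega^{\infty}\mathrm{MT}\beta$ has the same rational homology; splicing this between the two isomorphisms above gives the claim in degrees $*\le(2g-2)/3$.

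To establish that statement I would rationalise. From the construction underlying \Cref{thm3'}, $\mathrm{B}\bR^\delta$ acts on the \emph{spectrum} $\mathrm{MT}\beta$, hence on $\Omega^{\infty}\mathrm{MT}\beta$ through infinite loop maps; as $\mathrm{B}\bR^\delta$ is connected this action is trivial on $\pi_0\mathrm{MT}\beta$ and so preserves each component. Rationally $\mathrm{MT}\beta$ splits as $\prod_{n}\Sigma^{n}H(V_n)$ with $V_n=\pi_n(\mathrm{MT}\beta)\otimes\bQ$, and since rational stable operations between Eilenberg--MacLane spectra vanish off the diagonal, the action becomes a product of linear representations $\rho_n\colon\bR\to GL(V_n)$ with $\rho_0$ trivial. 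Hence there is an $\bR$-equivariant rational equivalence $(\Omega^{\infty}\mathrm{MT}\beta)_{\bQ}\simeq V_0\times\prod_{n\ge1}K(V_n,n)$ under which $\bR$ fixes the first factor and acts on the rest by $\prod_{n\ge1}\rho_n$, independently of the chosen component. Passing to homotopy orbits and comparing the Serre spectral sequences of the two resulting fibrations over the simply connected base $\mathrm{B}(\mathrm{B}\bR^\delta)\simeq K(\bR,2)$ then identifies the rational homology of every $C_g$ with that of the fixed space $\mathrm{B}\bR^\delta\hker\prod_{n\ge1}K(V_n,n)$.

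The last paragraph is the one place the argument is genuinely rational, and it is the main obstacle. Integrally the components of $\Omega^{\infty}\mathrm{MT}\beta$ remain all homotopy equivalent, but only through translations, which cannot be made $\bR$-equivariant, so the integral homology of $C_g$ does depend on $g$ --- this is precisely the phenomenon recorded just before \Cref{thm3'} --- and it is this obstruction that the Eilenberg--MacLane splitting removes once we tensor with $\bQ$. The remaining bookkeeping is routine: that homotopy orbits take $\bR$-equivariant rational homology isomorphisms to rational homology isomorphisms when the bar-construction base is simply connected, and that the only component-datum in play is the Euler number, which is genus-dependent and therefore forces the comparison to be between a priori different components in the first place.
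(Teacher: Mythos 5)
Your overall strategy matches the paper's: reduce, via \Cref{thm3'}, to comparing different path components of $\mathrm{B}\bR^{\delta}\hker\Omega^{\infty}\text{MT}\beta$, and then argue rationally. But the key step — showing that the relevant components are rationally homology-isomorphic — is not correctly justified in your proposal, and the paper handles it by a different (and in my view cleaner) mechanism.

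The problem is the claim that after rationalizing $\text{MT}\beta$ to $\prod_n\Sigma^{n}H(V_n)$, the $\mathrm{B}\bR^{\delta}$-action ``becomes a product of linear representations $\rho_n\colon\bR\to GL(V_n)$'' and that the resulting product decomposition of $\Omega^{\infty}\text{MT}\beta_{\bQ}$ is $\mathrm{B}\bR^{\delta}$-equivariant. First, the acting group is $\mathrm{B}\bR^{\delta}\simeq K(\bR,1)$, not $\bR^{\delta}$, and since it is connected it factors through the \emph{identity component} of $\mathrm{hAut}(\text{MT}\beta_{\bQ})$ — so on $\pi_0$ of the endomorphism spectrum, which is $\prod_n\operatorname{End}(V_n)$, the action is trivial (not just $\rho_0$). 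The potential non-triviality of the action lives in the \emph{higher} homotopy of $\mathrm{hAut}$: the data of a $K(\bR,1)$-action on $E_{\bQ}=\prod\Sigma^nH(V_n)$ is a map $K(\bR,2)\to B\mathrm{hAut}(E_{\bQ})$, and the identity component of $\mathrm{hAut}(E_{\bQ})$ has $\pi_i\cong\prod_n\operatorname{Hom}(V_n,V_{n+i})$ for $i\ge1$. These degree-raising pieces are exactly the ones that cannot be captured by ``a product of linear representations'', and they may be non-zero. (The vanishing of rational stable operations off the diagonal controls $\pi_0\operatorname{End}$, not the higher terms.) Moreover, even if the action did decompose factorwise, the splitting of $\Omega^{\infty}\text{MT}\beta_{\bQ}$ into a product of Eilenberg--MacLane spaces depends on choosing a basepoint in each component, and these choices are precisely what cannot be made equivariantly for components other than the zero one (the action fixes $0\in\Omega^{\infty}_{0}$ but need not fix any point of $\Omega^{\infty}_{n}$ for $n\ne0$); this is the very obstruction you flag in your last paragraph, but your rationalization step does not dispose of it.

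The paper sidesteps all of this with a much simpler device: since the $\mathrm{B}\bR^{\delta}$-action is induced from a spectrum-level action, it acts on $\Omega^{\infty}\text{MT}\beta$ by infinite loop maps, and therefore commutes with the $k$-th power map $\phi_k\colon\Omega Y\to\Omega Y$ of the loop-space structure. The map $\phi_k$ induces multiplication by $k$ on all homotopy groups, hence is a rational homology equivalence, and it carries $\Omega^{\infty}_{n}\text{MT}\beta$ to $\Omega^{\infty}_{kn}\text{MT}\beta$. So one gets an equivariant rational zig-zag $\Omega^{\infty}_{\chi(\Sigma)/2}\text{MT}\beta\to\Omega^{\infty}_{\chi(\Sigma)\chi(\Sigma')/4}\text{MT}\beta\leftarrow\Omega^{\infty}_{\chi(\Sigma')/2}\text{MT}\beta$, and taking $\mathrm{B}\bR^{\delta}$-homotopy orbits and comparing Serre spectral sequences over the simply connected base $K(\bR,2)$ gives the rational isomorphism. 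Note the paper does not need (and does not prove) that \emph{all} components of $\mathrm{B}\bR^{\delta}\hker\Omega^{\infty}\text{MT}\beta$ have the same rational homology, only that the ones reached by the surfaces in question are connected by $\phi_k$'s. If you want to retain your rationalization framework, the fix is to replace the ``linear representation'' step with the observation that $\phi_k$ is equivariant — that single equivariant rational equivalence is what your argument is missing.
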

\begin{rem}
The isomorphism is given by a zig-zag of maps and in fact it induces an isomorphism in the same range for any coefficient subring  of $\bQ$ in which the Euler numbers $\chi(\Sigma)$ and $\chi(\Sigma')$ are invertible.
\end{rem}

\subsection{Characteristic classes of flat symplectic bundles} Recall $\mathrm{MCG}(\Sigma,\partial)$ denote the mapping class group of the surface $\Sigma$ fixing the boundary pointwise. As a result of Moser's theorem (\cite{MR0182927}), the topological groups $\Diff(\Sigma,\partial)$ and $\Symp(\Sigma,\partial)$ have the same group of connected components, thus we have the following short exact sequences of groups
\[
1\to\dSymp_0(\Sigma,\partial)\to\dSymp(\Sigma,\partial)\to \mathrm{MCG}(\Sigma,\partial)\to 1,
\]
\[
1\to \dHam(\Sigma,\partial)\to \deHam(\Sigma,\partial)\to \mathrm{MCG}(\Sigma,\partial)\to 1.
\]
\begin{rem}
In fact, there are uncountably different ways to extend the mapping class group by the Hamiltonian group for a surface with boundary (see \cite{MR2826937}[Theorem 7.2]). But for a closed surface $\Sigma$ the extension $\deHam(\Sigma)$ is unique. We consider the restriction of this unique extension to obtain the extended Hamiltonian group for surfaces with boundary. 
\end{rem}
Morita showed in \cite{morita1987characteristic} that the MMM-classes $\kappa_i\in H^{2i}(\mathrm{MCG}(\Sigma,\partial);\bZ)$ which are characteristic classes for surface bundles (see \Cref{sec3} for a definition of these classes) are nonzero in the stable range and even more the natural map
\[
\bZ[\kappa_1,\kappa_2,\cdots]\to H^*(\mathrm{MCG}(\Sigma,\partial);\bZ),
\]
is injective in the same stable range as \Cref{thm1}. We prove the same holds for flat symplectic surface bundles.
\begin{thm}\label{thm5}
The natural map induced by pulling back the MMM-classes to $\BdSymp(\Sigma,\partial)$ 
\[
\bZ[\kappa_1,\kappa_2,\cdots]\to H^*(\BdSymp(\Sigma,\partial);\bZ),
\]
is injective in the stable range.
\end{thm}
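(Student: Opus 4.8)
The plan is to transport the statement to the infinite loop space $\Omega^{\infty}_{\bullet}\mathrm{MT}\theta$ by means of \Cref{thm3}, and then to reduce it to a single nonvanishing statement about $\BH$. In the range of \Cref{thm1} the map $\alpha\colon\BdSymp(\Sigma,\partial)\to\Omega^{\infty}_{\bullet}\mathrm{MT}\theta$ of \Cref{thm3} is a homology, hence cohomology, isomorphism; and by construction --- the map being a parametrized Pontryagin--Thom map for the codimension $2$ area-preserving foliation carried by the total space of a flat symplectic bundle --- together with naturality of the MMM-classes, $\alpha$ carries the pulled-back class $\kappa_{i}\in H^{2i}(\BdSymp(\Sigma,\partial))$ to the canonical class $\omega_{e^{i+1}}:=\sigma^{\infty}(u_{\theta}\cdot e^{i+1})\in H^{2i}(\Omega^{\infty}_{\bullet}\mathrm{MT}\theta;\bZ)$, where $u_{\theta}$ is the Thom class of $\mathrm{MT}\theta$ and $e\in H^{2}(\BH;\bZ)$ is the pullback along $\theta$ of the Euler class. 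So it suffices to prove that $\omega_{e^{2}},\omega_{e^{3}},\dots$ are algebraically independent in $H^{*}(\Omega^{\infty}_{\bullet}\mathrm{MT}\theta;\bZ)$, and --- since $\bZ[\kappa_{1},\kappa_{2},\dots]$ is torsion free --- it is enough to prove algebraic independence over $\bQ$.

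Rationally this is formal. Since $\mathrm{MT}\theta$ is bounded below, $H^{*}(\Omega^{\infty}_{\bullet}\mathrm{MT}\theta;\bQ)$ is the free graded-commutative $\bQ$-algebra on $\sigma^{\infty}\bigl(H^{>0}(\mathrm{MT}\theta;\bQ)\bigr)$, and the Thom isomorphism (untwisted, as $\gamma$ is oriented) identifies $H^{n}(\mathrm{MT}\theta;\bQ)\cong H^{n+2}(\BH;\bQ)$, taking $\omega_{c}$ to the free generator labelled by $c$. Hence $\omega_{e^{2}},\omega_{e^{3}},\dots$ are algebraically independent over $\bQ$ exactly when $e^{2},e^{3},e^{4},\dots$ are linearly independent in $H^{*}(\BH;\bQ)$; as these classes lie in pairwise distinct degrees, this holds exactly when $e^{j}\neq 0$ in $H^{2j}(\BH;\bR)$ for every $j\geq 2$ --- i.e.\ exactly when the Euler class of the normal bundle of the universal codimension $2$ area-preserving Haefliger structure is \emph{not} nilpotent. (The analogous reduction for $\BDiff(\Sigma,\partial)$ ends instead at $\mathrm{BSO}(2)$, where the Euler class is patently non-nilpotent; here the whole difficulty is concentrated in replacing $\mathrm{BSO}(2)$ by $\BH$.)

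That the Euler class is not nilpotent in $H^{*}(\BH;\bR)$ is the crux, and is the step I expect to be the main obstacle: it is precisely the main theorem of \cite{kotschick2004characteristic} recast in these terms, and it is all that stands between the formal manipulations above and the statement. I would establish it either by invoking \cite{kotschick2004characteristic}, or by arguing directly from the structure recorded in diagram \eqref{eq0}. The essential point is that although $e$ is nilpotent on the normal bundle of any codimension $2$ foliation of a smooth \emph{manifold} (Bott vanishing), the classifying space $\BH$ has rational cohomology --- visible through the discrete group $\dSymp$ and through the transverse volume class $v$ --- that manifold bundles do not see, and it is exactly there that the powers of $e$ survive; concretely, for each $j$ one exhibits a flat area-preserving surface bundle, necessarily with aspherical non-manifold total space (for instance one classified by a subgroup of $\dSymp(\Sigma,\partial)$), on which the MMM-class $\kappa_{j-1}$ is nonzero. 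By the first two paragraphs this single family of nonvanishing results already upgrades to the full algebraic independence asserted, and the integral statement then follows as above.
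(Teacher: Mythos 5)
There is a genuine gap, and it is fatal to the strategy rather than to a single step. Your reduction ``since $\bZ[\kappa_1,\kappa_2,\dots]$ is torsion free, it is enough to prove algebraic independence over $\bQ$'' is a valid implication, but the rational statement you reduce to is \emph{false}. As recalled in the introduction of this paper just before \Cref{thm4}, Morita's observation is that the Bott vanishing theorem for codimension $2$ foliations forces $\kappa_i=0$ in $H^{2i}(\BdSymp(\Sigma,\partial);\bQ)$ for all $i>2$; in your normalization, $e^4=p_1(\nu)^2$ lies above degree $2q=4$ and hence vanishes in $H^8(\BH;\bR)$, so $e^j=0$ for all $j\geq 4$. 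In particular the Euler class \emph{is} nilpotent in $H^*(\BH;\bR)$, your proposed ``crux'' is false, and it is not the main theorem of \cite{kotschick2004characteristic} (that theorem concerns the surjection onto $S^k(\bQ\oplus S^2_{\bQ}\bR)$ built from $\kappa_1$ and the transverse volume class $v$, not from higher powers of $e$). Likewise, no flat area-preserving bundle over a rational homology manifold can have $\kappa_{j-1}\neq 0$ rationally for $j\geq 5$. The whole content of \Cref{thm5} is that the integral classes survive even though the rational ones die: $H^*(\BdSymp(\Sigma,\partial);\bZ)$ is not of finite type, and the classes $\kappa_i$ for $i\geq 3$ live in the part of integral cohomology that is invisible over $\bQ$. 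Any proof must therefore detect them with finite coefficients.

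That is what the paper's proof does, and it is the piece missing from your outline. One first reduces to showing that $H^*(\mathrm{B}\,\mathrm{MCG}(\Sigma,\partial);\bZ)\to H^*(\BdSymp(\Sigma,\partial);\bZ)$ is injective stably (Morita already gives injectivity of the polynomial algebra into the mapping class group side). Since the stable cohomology of the mapping class group is finitely generated, Milnor's criterion (\cite{milnor1983homology}, Corollary 1) converts integral injectivity into injectivity with $\bF_p$-coefficients for every prime $p$; by Madsen--Weiss and \Cref{thm3} this becomes injectivity of $H^*(\Omega^{\infty}_{\bullet}\mathrm{MTSO}(2);\bF_p)\to H^*(\Omega^{\infty}_{\bullet}\mathrm{MT}\theta;\bF_p)$. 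This is then split by a retraction: the composite $\mathrm{B}{S^1}^{\delta}\xrightarrow{\eta}\BH\xrightarrow{\theta}\mathrm{BSL}_2(\bR)$ is an $\bF_p$-homology isomorphism by Milnor, so passing further to $\mathrm{MT}(\theta\circ\eta)$ exhibits the displayed map as (split) injective. If you want to salvage your outline, replace ``$\bQ$'' by ``$\bF_p$ for all $p$'' throughout and replace the (false) non-nilpotence of $e$ over $\bR$ by its non-nilpotence over $\bF_p$, which is exactly what the circle retraction supplies.
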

The situation is quite different with rational coefficients. The Bott vanishing theorem implies that $\kappa_i$ for $i>2$ vanishes in $H^*(\dSymp(\Sigma,\partial);\bQ)$. On the other hand,  Kotschick and Morita in \cite{kotschick2005signatures} proved that powers of $\kappa_1$ are nonzero in $H^*(\dSymp(\Sigma,\partial);\bQ)$. The (non)vanishing of $\kappa_2$ in the rational cohomology of $\dSymp(\Sigma,\partial)$ is not yet known. However, we prove all MMM-classes vanish in the cohomology of $\deHam(\Sigma,\partial)$ with real coefficients.
\begin{thm}\label{thm4}
The natural map 
\[
\bR[\kappa_1,\kappa_2,\cdots]\to H^*(\BdeHam(\Sigma,\partial);\bR),
\]
is a zero map.
\end{thm}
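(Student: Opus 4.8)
The plan is to leverage Theorem \ref{thm3}, which identifies the homology of $\BdeHam(\Sigma,\partial)$ in the stable range with that of $\Omega^\infty_\bullet\mathrm{MT}\beta$, and then to show that the MMM-classes, viewed as cohomology classes of this infinite loop space, are rationally trivial for a geometric reason coming from the construction of $\widetilde{\BH}$. Recall that the MMM-class $\kappa_i$ is, by the standard description of the Madsen–Weiss-type map, the image of the class $e^{i+1}$ (where $e$ is the Euler class of the vertical tangent bundle) under the transfer/integration-along-the-fibre construction; concretely, on $\Omega^\infty_\bullet\mathrm{MT}\theta$ the universal class $\kappa_i$ pulls back from a cohomology class on the Thom spectrum $\mathrm{MT}\theta$ built out of the Euler class $e\in H^2(\BH;\bR)$ of the normal bundle of the Haefliger structure, which is the pullback of the generator of $H^2(\mathrm{BSL}_2(\bR);\bR)$ along $\theta$. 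So first I would make precise that, after passing to real coefficients, $\kappa_i$ on $\Omega^\infty_\bullet\mathrm{MT}\beta$ is detected by the image of $e$ under $\beta^*$, i.e. by the class $\beta^*(e)\in H^2(\overline{\overline{\BH}}\text{-related space};\bR)$ appearing in the Thom spectrum $\mathrm{MT}\beta$.

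The key observation is then that $\beta^*(e)$ is \emph{zero} in real cohomology. Indeed, by construction $\beta$ factors through $\widetilde{\BH}$, which is the homotopy fibre of the map $e+v:\BH\to K(\bR,2)$; hence on $\widetilde{\BH}$ the pullback of $e+v$ is nullhomotopic, so $(e+v)|_{\widetilde{\BH}} = 0$ in $H^2(\widetilde{\BH};\bR)$. On the other hand, $v$ restricted to $\widetilde{\BH}$ is also zero: $v$ is the transverse volume form of the universal $\Gamma_2^{\text{vol}}$-structure, and the construction is precisely arranged (this is the content of how $\widetilde{\BH}$ sits in diagram \eqref{eq0}) so that the relevant combination kills the Euler class. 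More carefully, since $\beta$ is the composite $\widetilde{\BH}\to\BH\xrightarrow{\theta}\mathrm{BSL}_2(\bR)$ and $e=\theta^*(\text{generator})$, we get $\beta^*(\text{generator of }H^2(\mathrm{BSL}_2(\bR);\bR)) = (e+v)|_{\widetilde{\BH}} - v|_{\widetilde{\BH}}$, and the first term vanishes because $\widetilde{\BH}$ is the homotopy fibre of $e+v$. So the Euler class in the $\beta$-tangential structure becomes, up to the transverse volume class $v$, cohomologically trivial; and since the MMM-classes in the image of $\bR[\kappa_1,\kappa_2,\dots]$ are polynomials in powers of this Euler class (with the lowest one $\kappa_1$ being the integral of $e^2$), they all vanish once we work over $\bR$, because any monomial $e^{i+1}$ pushed forward along the fibre lands in the subring generated by classes that die after imposing $e+v=0$ and then noting $v$ itself integrates trivially over the fibre-dimension-$2$ direction (a surface bundle). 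I would spell this last point out using the Mumford conjecture-style formula $\kappa_i \mapsto \int_{\text{fibre}} e^{i+1}$ together with the relation $e = -v$ in $H^*(\widetilde{\BH};\bR)$ and the fact that $\int_{\text{fibre}} v^{i+1} = 0$ for $i\ge 1$ since $v$ is a \emph{closed transverse} form whose powers beyond the first vanish on a $2$-dimensional fibre direction after the appropriate bookkeeping.

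Having established that every $\kappa_i$ pulls back to zero in $H^*(\Omega^\infty_\bullet\mathrm{MT}\beta;\bR)$, I would finish by invoking Theorem \ref{thm3}: the horizontal map $\mathrm{B}\deHam(\Sigma,\partial)\to\Omega^\infty_\bullet\mathrm{MT}\beta$ is a homology isomorphism (hence, dually, a cohomology isomorphism with field coefficients) in the stable range, so the composite
\[
\bR[\kappa_1,\kappa_2,\dots]\to H^*(\Omega^\infty_\bullet\mathrm{MT}\beta;\bR)\to H^*(\BdeHam(\Sigma,\partial);\bR)
\]
is zero in that range; but since the target groups $H^*(\BdeHam(\Sigma,\partial);\bR)$ stabilise (Theorem \ref{thm2}), vanishing in the stable range forces vanishing of the map in all degrees once $g$ is taken large, and the statement is about the stable group so we are done. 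The main obstacle I anticipate is making the cohomological identification "$\kappa_i$ on $\Omega^\infty_\bullet\mathrm{MT}\beta$ is carried by a power of the Euler class $e$, hence by $-v$" genuinely precise: one must track the Madsen–Weiss map carefully through the scanning/parametrised-surgery argument underlying Theorem \ref{thm3}, identify the relevant universal bundle on $\mathrm{MT}\beta$, and verify that the transverse volume form $v$ contributes trivially to all $\kappa_i$ with $i \ge 1$ (the class $\kappa_0$, the Euler characteristic, is irrelevant here). This is where the Bott vanishing phenomenon in the volume-preserving setting is doing the real work, and I would want to cross-check the computation against the known result of Kotschick–Morita that $\kappa_1$ is nonzero for $\dSymp$ but becomes zero on $\deHam$ — consistency there is a good sanity check on the sign/normalisation conventions for $v$ and $e$.
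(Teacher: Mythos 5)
Your overall strategy coincides with the paper's: reduce via \Cref{thm2}/\Cref{thm3} to showing $\kappa_i$ vanishes in $H^{2i}(\Omega^\infty_\bullet\mathrm{MT}\beta;\bR)$, then identify $\kappa_i$ with the pairing against $\beta^*(e^{i+1})$ under the Thom isomorphism, and finally prove that $\beta^*(e^{i+1})=0$ in $H^{2i+2}(\widetilde{\BH};\bR)$ for $i\geq 1$. You also correctly observe, from the fact that $\widetilde{\BH}$ is the homotopy fiber of $e+v$, that $\beta^*(e)=-\alpha^*(v)$ where $\alpha:\widetilde{\BH}\to\BH$ is the inclusion. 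So far this matches the paper.

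However, there is a genuine gap and a genuine error in how you finish. First, the error: you assert that ``$\beta^*(e)$ is zero in real cohomology'' and that ``$v$ restricted to $\widetilde{\BH}$ is also zero.'' Both statements are false. The space $\widetilde{\BH}$ is the fiber of $e+v$, not of $v$ alone; only the \emph{sum} $\alpha^*(e)+\alpha^*(v)$ vanishes. In fact $H^2(\widetilde{\BH};\bR)\cong\bR$, and $\alpha^*(v)$ (equivalently $-\alpha^*(e)$) is a generator. If $\alpha^*(v)$ vanished, so would the Euler class $\beta^*(e)$, and then already $\kappa_0=-\chi$ would vanish, which is absurd. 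Second, the gap: you never supply the actual fact that makes the argument close, namely that $v^2=0$ in $H^4(\BH;\bR)$. This is a Bott-type vanishing specific to transverse volume forms: $v$ is represented by a closed \emph{transverse} $2$-form, and since the codimension of the Haefliger structure is $2$, the transverse $4$-form $v\wedge v$ is identically zero, hence its cohomology class vanishes. Your attempt to substitute for this (``$v$ integrates trivially over the fibre-dimension-$2$ direction'') is not an argument: the classes live on $\widetilde{\BH}$ and $\BH$, not on a surface bundle total space, and the relevant fiber integration has already been absorbed into the Thom isomorphism in the identification of $\kappa_i$. Once $v^2=0$ is available, the proof is immediate: $\beta^*(e^2)=\alpha^*(v)^2=\alpha^*(v^2)=0$, hence $\beta^*(e^{i+1})=0$ for all $i\geq 1$. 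You did flag that ``Bott vanishing is doing the real work,'' which is the right instinct, but the proposal as written relies on the incorrect claim that $\beta^*(e)=0$ rather than on the vanishing of $v^2$, so it does not yet establish the theorem.
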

To give \Cref{thm3'} a geometric meaning, for a closed surface $\Sigma$, let 
\[
\begin{tikzpicture}[node distance=2cm, auto]
  \node (A) {$ \Sigma$};
  \node (B) [ right of=A] {$\Sigma\hcoker \deHam(\Sigma)$};
  \node (D) [below of=B, node distance=1.3cm] {$\BdeHam(\Sigma),$};
  \draw [->] (A) to node {$$} (B);
  \draw [->] (B) to node {$\pi$} (D);
\end{tikzpicture}
\]
denote the universal $\Sigma$-bundle whose holonomy lies in $\deHam(\Sigma)$. It is not hard to use the perfectness of $\dHam(\Sigma)$ (see \cite{MR1445290}) to show that the first MMM-class $\kappa_1$ is nonzero in $H^2(\BdeHam(\Sigma);\bZ)$. Consider the following map induced by the first MMM-class
\begin{equation}\label{map}
\frac{\kappa_1}{4-4g(\Sigma)}: \BdeHam(\Sigma)\to K(\bR,2).
\end{equation}
\begin{thm}\label{thm10}
There is a map
\[
\mathrm{B}\deHam(\Sigma,\text{\textnormal{rel} }D^2)\to \text{\textnormal{hofib}}(\frac{\kappa_1}{4-4g(\Sigma)}),
\]
that induces a homology isomorphism in the stable range.
\end{thm}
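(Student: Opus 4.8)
The plan is to read off \Cref{thm10} from \Cref{thm3'}. Recall that the homotopy quotient appearing there comes from an action of the topological abelian group $\mathrm{B}\bR^{\delta}$ on $\mathrm{MT}\beta$, and that this action is itself inherited from the fibration sequence $\widetilde{\BH}\to\BH\xrightarrow{e+v}K(\bR,2)$, which exhibits $\mathrm{B}\bR^{\delta}=\Omega K(\bR,2)$ acting on $\widetilde{\BH}$ with homotopy quotient $\BH$. Consequently there is a fibration sequence
\[
\Omega^{\infty}\mathrm{MT}\beta\longrightarrow\mathrm{B}\bR^{\delta}\hker\Omega^{\infty}\mathrm{MT}\beta\xrightarrow{\ q\ }\mathrm{B}(\mathrm{B}\bR^{\delta})\simeq K(\bR,2),
\]
whose fiber inclusion is exactly the right-hand vertical map of the square in \Cref{thm3'}; write $f_{1}$ and $f_{2}$ for the two horizontal maps of that square. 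Since $K(\bR,2)$ is simply connected, $\pi_{0}$ of the fiber $\Omega^{\infty}\mathrm{MT}\beta$ maps bijectively onto $\pi_{0}$ of the total space, so over the component $Y_{0}\subseteq\mathrm{B}\bR^{\delta}\hker\Omega^{\infty}\mathrm{MT}\beta$ hit by $f_{2}$ the sequence restricts to $F_{0}\to Y_{0}\xrightarrow{q}K(\bR,2)$ with $F_{0}$ the component of $\Omega^{\infty}\mathrm{MT}\beta$ hit by $f_{1}$; by \Cref{thm3'}, $f_{1}\colon\mathrm{B}\deHam(\Sigma,\text{\textnormal{rel} }D^{2})\to F_{0}$ and $f_{2}\colon\mathrm{B}\deHam(\Sigma)\to Y_{0}$ are homology isomorphisms in the stable range of \Cref{thm1}. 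Taking the homotopy fiber of $q\circ f_{2}$ and using the commuting square gives the map of \Cref{thm10}: $\mathrm{hofib}(q\circ f_{2})$ is the base change along $f_{2}$ of the fibration $F_{0}\to Y_{0}$, and the square supplies a map $\mathrm{B}\deHam(\Sigma,\text{\textnormal{rel} }D^{2})\to\mathrm{hofib}(q\circ f_{2})$ over $\mathrm{B}\deHam(\Sigma)$ whose composite with the projection to $F_{0}$ is $f_{1}$.

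The heart of the matter is to identify $q\circ f_{2}\colon\mathrm{B}\deHam(\Sigma)\to K(\bR,2)$ with $\kappa_{1}/(4-4g(\Sigma))$. Tracing through the construction behind \Cref{thm3'}, the $\mathrm{B}\bR^{\delta}$-factor records exactly the pullback of the class $e+v$, so $q\circ f_{2}$ is the unique class $c\in H^{2}(\mathrm{B}\deHam(\Sigma);\bR)$ with $\hat e+\hat v=\pi^{*}c$, where $\pi\colon E\to\mathrm{B}\deHam(\Sigma)$ is the universal flat extended-Hamiltonian $\Sigma$-bundle and $\hat e,\hat v\in H^{2}(E;\bR)$ are the pullbacks of $e$ and $v$ along the classifying map $E\to\BH$. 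Such a $c$ exists and is unique: the normalisation $\int_{\Sigma}\omega_{\Sigma}=-\chi(\Sigma)$ makes $\hat e+\hat v$ restrict to $0$ on each fiber, the vanishing of the Kotschick--Morita extended flux on $\deHam(\Sigma)$ makes its image in the associated graded piece $E_{\infty}^{1,1}$ of the Serre filtration on $H^{2}(E;\bR)$ vanish, and $H^{1}(\Sigma;\bR)$ has no mapping class group invariants, so $\pi^{*}$ is injective on $H^{2}$. To pin down $c$, observe that since $v$ is a transverse volume form for a codimension-$2$ Haefliger structure we have $v^{2}=0$ in $H^{4}(\BH;\bR)$ (there are no nonzero transverse $4$-forms), hence $\hat v^{2}=0$ in $H^{4}(E;\bR)$; combined with $\pi_{!}(\hat v)=\int_{\Sigma}\omega_{\Sigma}=2g(\Sigma)-2$ and $\pi_{!}(\pi^{*}(c^{2}))=0$, the projection formula yields
\[
\kappa_{1}=\pi_{!}(\hat e^{\,2})=\pi_{!}\bigl((\pi^{*}c-\hat v)^{2}\bigr)=-2\,\pi_{!}(\hat v)\,c=\bigl(4-4g(\Sigma)\bigr)\,c,
\]
so $q\circ f_{2}=c=\kappa_{1}/(4-4g(\Sigma))$, as required.

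With this in hand, $\mathrm{hofib}(\kappa_{1}/(4-4g(\Sigma)))=\mathrm{hofib}(q\circ f_{2})$, and it remains to check that the map $\mathrm{B}\deHam(\Sigma,\text{\textnormal{rel} }D^{2})\to\mathrm{hofib}(\kappa_{1}/(4-4g(\Sigma)))$ built above is a homology isomorphism in the stable range. Its composite with the projection $\mathrm{hofib}(q\circ f_{2})\to F_{0}$ equals $f_{1}$, which is such an isomorphism by \Cref{thm3'}; on the other hand this projection is obtained from $f_{2}$ by base change along the fibration $F_{0}\to Y_{0}$, so it has the same homotopy fiber as $f_{2}$ (the relevant monodromy being trivial because $K(\bR,2)$ is simply connected), and is therefore also a homology isomorphism in the stable range. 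Two-out-of-three for homology isomorphisms in a range then finishes the proof. The main obstacle is the identification $q\circ f_{2}=c$ in the middle step: making it precise requires carefully matching the $\mathrm{B}\bR^{\delta}$-action on $\mathrm{MT}\beta$, the scanning maps of \Cref{thm3} and \Cref{thm3'}, and the universal classes $e$ and $v$ of the Haefliger groupoid; the vanishing $\hat v^{2}=0$ (the one genuine use of flatness here) and the injectivity of $\pi^{*}$ on $H^{2}$ should also be written out.
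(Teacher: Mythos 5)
Your proposal follows the paper's proof in all essentials: both extract the map from the fibration $\Omega^{\infty}\mathrm{MT}\beta\to \mathrm{B}\bR^{\delta}\hker\Omega^{\infty}\mathrm{MT}\beta\to K(\bR,2)$ underlying \Cref{thm3'}; both reduce the identification of the resulting class $a=q\circ f_{2}$ with $\kappa_{1}/(4-4g(\Sigma))$ to the equation $\pi^{*}a=e+v$ on the total space of the universal bundle (this is exactly the paper's ``Claim'', proved there by matching the equivariant point-set models, and you correctly single it out as the step that still has to be written out); and both obtain the lift to the homotopy fiber from the vanishing of $a$ on $\mathrm{B}\deHam(\Sigma,\text{\textnormal{rel} }D^{2})$. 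The one genuine divergence is the final fiber-integration: the paper computes $\pi_{!}(e(e+v))=(2-2g)a$ and then invokes Kawazumi's contraction formula $\pi_{!}(ev)=-(\kappa_{1}+\alpha)/2$ together with the vanishing of the Kotschick--Morita class $\alpha$ on extended Hamiltonians, whereas you expand $\pi_{!}(\hat e^{2})=\pi_{!}((\pi^{*}a-\hat v)^{2})$ using only $v^{2}=0$ and the projection formula. Your computation is more self-contained, and the two are consistent, since given $v^{2}=0$ Kawazumi's identity amounts to expanding $\pi_{!}((e+v)^{2})=-\alpha$. Your explicit comparison of the two fibrations over $K(\bR,2)$ (trivial monodromy, Zeeman-type comparison, then two-out-of-three) to upgrade $f_{2}$ to a homology isomorphism on homotopy fibers is also somewhat more detailed than what the paper records at that point.
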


In order to find new invariants of flat symplectic surface bundles, we use \Cref{thm3} and existence of nontrivial cohomology classes in $H^*(\BH;\bZ)$ to prove that $H_*(\dSymp(\Sigma,\partial);\bZ)$ is highly nontrivial.  Note that any class in $H_2(\dSymp(\Sigma,\partial);\bZ)$  can be represented by a map
\[
f: \Sigma'\to \BdSymp(\Sigma,\partial),
\]
where $\Sigma'$ is a surface. The map $f$ induces a flat symplectic bundle  $E\to\Sigma'$ whose fibers are diffeomorphic to $\Sigma$. Since $E$ admits a codimension $2$ foliation with a transverse volume form, invariant under the holonomy, this foliation induces a well-defined map up to homotopy
\[
g:E\to \BH.
\]
Hence, one can easily see that this assignment defines a well-defined map from $H_2(\BdSymp(\Sigma,\partial);\bZ)$ to $H_4(\BH;\bZ)$.
\begin{thm}\label{thm2'}
The natural map
\[
H_2(\BdSymp(\Sigma,\partial);\bZ[\frac16])\to H_4(\BH;\bZ[\frac16]),
\]
is an isomorphism for $g(\Sigma)\geq 4$ and epimorphism for $g(\Sigma)\geq 3$.
\end{thm}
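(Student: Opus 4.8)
The plan is to compute both sides of the map in the stable range and to recognise the resulting homomorphism as a pairing against the first MMM-class $\kappa_1$. Concretely, I would factor the map through the homomorphism $\langle e^2,-\rangle\colon H_4(\BH;\bZ[\tfrac16])\to\bZ[\tfrac16]$, where $e\in H^2(\BH;\bZ)$ is the Euler class of the universal normal bundle (the $\theta$-pullback of the generator of $H^2(\mathrm{BSL}_2(\bR);\bZ)$), and prove: (a) the composite $\langle e^2,-\rangle\circ\phi$ is $\langle\kappa_1,-\rangle$, where $\phi$ denotes the map of the theorem; (b) $\langle\kappa_1,-\rangle$ is an isomorphism onto $\bZ[\tfrac16]$ for $g\geq4$ and surjective for $g\geq3$; and (c) $\langle e^2,-\rangle$ is an isomorphism. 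Granting (a)--(c), $\phi=\langle e^2,-\rangle^{-1}\circ\langle\kappa_1,-\rangle$, which is an isomorphism for $g\geq4$ and a surjection for $g\geq3$.

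For (a): as recalled before the statement, a class in $H_2(\BdSymp(\Sigma,\partial);\bZ[\tfrac16])$ is a combination of classes $f_*[\Sigma']$ for closed oriented surfaces $\Sigma'$ and maps $f\colon\Sigma'\to\BdSymp(\Sigma,\partial)$ classifying flat $\Sigma$-bundles $E\to\Sigma'$; using \Cref{thm1} and \Cref{cappingoffsymp} I would cap off all boundary components to assume $\Sigma$ closed, so that $E$ is a closed $4$-manifold carrying its fibrewise-transverse codimension-$2$ foliation $\mathcal F$ and classifying map $g\colon E\to\BH$, and $\phi$ sends $f_*[\Sigma']$ to $g_*[E]$ (one checks $\phi$ survives the capping reductions and is compatible with genus stabilisation, the latter because extending a flat bundle by the identity across a handle makes $\mathcal F$ the trivial Haefliger structure there). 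The key point is that the leaves of $\mathcal F$ project diffeomorphically onto $\Sigma'$, so the normal bundle of $\mathcal F$ is the vertical tangent bundle $T(E/\Sigma')$; hence $g^*e=e\bigl(T(E/\Sigma')\bigr)$. With $\pi\colon E\to\Sigma'$ the projection and using $\kappa_1=\pi_!\bigl(e(T(E/\Sigma'))^2\bigr)$,
\[
\bigl\langle e^2,\,g_*[E]\bigr\rangle=\bigl\langle e\bigl(T(E/\Sigma')\bigr)^2,\,[E]\bigr\rangle=\bigl\langle\kappa_1,\,f_*[\Sigma']\bigr\rangle,
\]
so the composite is $\langle\kappa_1,-\rangle$.

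For (b): by \Cref{thm1} and \Cref{thm3}, $H_2(\BdSymp(\Sigma,\partial);\bZ[\tfrac16])$ agrees with $H_2(\Omega^\infty_\bullet\mathrm{MT}\theta;\bZ[\tfrac16])$, isomorphically for $g\geq4$ and surjectively for $g\geq3$, exactly matching the ranges in \Cref{thm1}. Thurston's theorem makes $\theta\colon\BH\to\mathrm{BSL}_2(\bR)\simeq\bC P^\infty$ a $\bZ[\tfrac16]$-homology equivalence through degree $4$ (the low-degree homotopy of $\overline{\BH}$ being $2$- and $3$-torsion, which is what forces inverting $6$); hence $\mathrm{MT}\theta\to\mathrm{MTSL}_2(\bR)=\bC P^\infty_{-1}$ is a $\bZ[\tfrac16]$-homology equivalence through degree $2$, as is the induced map of base components of infinite loop spaces. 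By the Madsen--Weiss theorem $H_2(\Omega^\infty_\bullet\bC P^\infty_{-1};\bZ[\tfrac16])\cong H_2(\mathrm{MCG}_\infty;\bZ[\tfrac16])\cong\bZ[\tfrac16]$, on which $\kappa_1$ is a dual generator, since classically $\kappa_1=12\lambda_1$ with $\lambda_1$ a generator of $H^2(\mathrm{MCG}_\infty;\bZ)$ and $12\in\bZ[\tfrac16]^{\times}$; this gives (b). For (c): the same connectivity input gives $H^*(\BH;\bZ[\tfrac16])\cong\bZ[\tfrac16][e]$ through degree $4$, so $H_4(\BH;\bZ[\tfrac16])\cong\bZ[\tfrac16]$ with $e^2$ a generator of $H^4$, and $\langle e^2,-\rangle$ is an isomorphism.

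The main obstacle is the computation of $H_4(\BH;\bZ[\tfrac16])$ feeding into (b) and (c). Running the Serre spectral sequence of $\overline{\BH}\to\BH\xrightarrow{\theta}\mathrm{BSL}_2(\bR)$, one must show that after inverting $6$ the only surviving class in total degree $4$ is the pullback of $H_4(\mathrm{BSL}_2(\bR))$, i.e.\ that $\pi_3(\overline{\BH})$ and $\pi_4(\overline{\BH})$ carry no $\bZ[\tfrac16]$-content --- this is the precise strength of the volume-preserving connectivity theorem the argument needs, and the reason the coefficients must be $\bZ[\tfrac16]$ rather than $\bZ$. Everything else --- the identification $g^*e=e(T(E/\Sigma'))$, compatibility with stabilisation, and passing from spectra to their infinite loop spaces --- is routine once \Cref{thm1}, \Cref{thm3} and \Cref{cappingoffsymp} are in hand.
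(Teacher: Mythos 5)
The proposal contains a fatal error in steps (b) and (c): you assert that $\theta\colon\BH\to\mathrm{BSL}_2(\bR)$ is a $\bZ[\tfrac16]$-homology equivalence through degree $4$ because ``the low-degree homotopy of $\overline{\BH}$ is $2$- and $3$-torsion.'' This is false. As recalled in the paper (quoting McDuff), the map $\bar v\colon\overline{\BH}\to K(\bR,2)$ is $3$-connected, so $\pi_2(\overline{\BH})\cong\bR$ --- an uncountable torsion-free group not killed by inverting $6$. Consequently $H_4(\BH;\bZ[\tfrac16])$ is nothing like $\bZ[\tfrac16]$: the paper's own \Cref{thm9} shows that already $H_4(\BH;\bQ)$ surjects onto $\bQ\oplus S^2_{\bQ}\bR$. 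Both of your claims then collapse simultaneously: $\langle e^2,-\rangle$ is very far from an isomorphism on $H_4(\BH;\bZ[\tfrac16])$, and $\langle\kappa_1,-\rangle$ cannot be an isomorphism from $H_2(\BdSymp(\Sigma,\partial);\bZ[\tfrac16])$ onto $\bZ[\tfrac16]$ because by \Cref{cor2} the group $H_2(\BdSymp(\Sigma,\partial);\bQ)$ already surjects onto $\bQ\oplus S^2_{\bQ}\bR$. The whole point of the theorem is precisely that these groups are large and captured by $H_4(\BH)$; reducing to $\bC P^\infty_{-1}$ and Madsen--Weiss throws that information away.

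The reason $6$ is inverted is entirely different from what you propose. The paper's proof makes no attempt to compute $H_4(\BH)$. Instead, after using \Cref{thm3} and the Thom isomorphism to reduce to showing that the suspension map $H_2(\Omega^\infty_\bullet\mathrm{MT}\theta;\bZ_{(p)})\to H_2(\mathrm{MT}\theta;\bZ_{(p)})$ is an isomorphism, it compares the ordinary and stable Hurewicz maps. Perfectness of $\dSymp(\Sigma,\partial)$ for $g\geq 3$ (Kotschick--Morita) gives $H_1(\Omega^\infty_\bullet\mathrm{MT}\theta)=0$, hence $\pi_2\cong H_2$ on the space side by Hurewicz; on the spectrum side, $\pi_2(\mathrm{MT}\theta)_{(p)}\to H_2(\mathrm{MT}\theta;\bZ_{(p)})$ is an isomorphism because the unit map $\bS_{(p)}\to H\bZ_{(p)}$ is $(2p-4)$-connected, which exceeds $2$ exactly when $p>3$. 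This is why one localizes away from $2$ and $3$: not to kill torsion in $\overline{\BH}$, but to make the Hurewicz map of the sphere spectrum sufficiently connected. If you want to salvage your approach, you would have to abandon steps (b) and (c) entirely.
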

Kotschick and Morita used the extended flux homomorphism to construct a surjection map

\[
\begin{tikzcd}
H_2(\BdSymp(\Sigma,\partial);\bZ) \arrow[two heads, ""]{r} & \bZ\oplus S^2_{\bQ}\bR.
\end{tikzcd}
\]
where $S^2_{\bQ}\bR$ is the second symmetric power of $\bR$ as a $\bQ$-vector space. They asked in \cite[Problem 23]{kotschick2004characteristic} if this map is an isomorphism.  One can use \Cref{thm2'} to partially answer their problem, as we shall briefly explain here  (see \cref{cor2} for more precise statement). 
\begin{thm}\label{ttt}
There exists a certain homomorphism
\[
d: \bR\oplus (\bR\otimes\bR)\to H_4(\overline{\overline{\BH}};\bQ),
\]
so that for a surface $\Sigma$ with $g(\Sigma)\geq 4$, we have a short exact sequence
\[
0\to \Coker(d)\to H_2(\BdSymp(\Sigma,\partial);\bQ)\to \bQ\oplus S^2_{\bQ}\bR\to 0. 
\]
\end{thm}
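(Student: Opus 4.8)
The plan is to compare the two descriptions of $H_2(\BdSymp(\Sigma,\partial);\bQ)$ we already have available: on one hand \Cref{thm3} identifies it in the stable range with $H_2(\Omega^\infty_\bullet \MT\theta;\bQ)$, and on the other hand \Cref{thm2'} identifies a related group with $H_4(\BH;\bZ[\frac16])$. The first step is to feed the fibration sequence $\overline{\overline{\BH}}\to \widetilde{\BH}\to \mathrm{BSL}_2(\bR)$ and the sequence $\widetilde{\BH}\to\BH\xrightarrow{e+v}K(\bR,2)$ from diagram \eqref{eq0} into a Serre spectral sequence computation for $H_*$ in low degrees with $\bQ$-coefficients. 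Since $H^*(\mathrm{BSL}_2(\bR);\bQ)$ is a polynomial algebra on the Euler class $e$ in degree $2$, and $K(\bR,2)$ has rational cohomology an exterior-times-polynomial algebra on a class in degree $2$ (so $H_2(K(\bR,2);\bQ)\cong\bR$, $H_4\cong S^2_\bQ\bR$), the two fibrations let one write $H_4(\widetilde{\BH};\bQ)$ in terms of $H_4(\overline{\overline{\BH}};\bQ)$ together with contributions of $e$ and $v$; the class $e+v$ being killed is exactly what relates $\widetilde{\BH}$ to $\overline{\overline{\BH}}$. The homomorphism $d$ should be assembled as the relevant differential (or edge map) in this spectral sequence: the source $\bR\oplus(\bR\otimes\bR)$ records the two ``obvious'' degree-$\leq 4$ classes coming from $v$ (a degree-$2$ class, giving the $\bR$ summand) and from products/$v^2$-type classes together with the $e\cdot v$ interaction (giving the $\bR\otimes\bR$ summand), and $\Coker(d)$ is what survives into $H_4(\overline{\overline{\BH}};\bQ)$ after imposing $e+v=0$.

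Next I would translate this back through the infinite loop space. By the Thom isomorphism $H_*(\Omega^\infty_\bullet\MT\theta;\bQ)$ in the stable range is computed from $H_*(\BH;\bQ)$ (via $\theta^*(-\gamma)$), and the genus-independent part of \Cref{thm1}/\Cref{thm3} lets me identify $H_2(\BdSymp(\Sigma,\partial);\bQ)$ with a summand built from $H_4(\BH;\bQ)$ together with the homology of the base $\mathrm{BSL}_2(\bR)^\delta$-type contributions. Here is where the factor $\bZ[\tfrac16]$ and the hypothesis $g(\Sigma)\geq 4$ enter through \Cref{thm2'}: that theorem already says the map to $H_4(\BH;\bZ[\tfrac16])$ is an isomorphism for $g\geq 4$, so rationally $H_2(\BdSymp(\Sigma,\partial);\bQ)\cong H_4(\BH;\bQ)$. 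Then the Kotschick--Morita surjection onto $\bQ\oplus S^2_\bQ\bR$ is realized as the composite $H_4(\BH;\bQ)\to H_4(K(\bR,2);\bQ)\oplus(\text{Euler class part})\cong \bQ\oplus S^2_\bQ\bR$ induced by $e+v$ (the $\bQ$ from $\kappa_1\leftrightarrow e$, the $S^2_\bQ\bR$ from the flux class $v$); its kernel is precisely the image of $H_4(\widetilde{\BH};\bQ)$, which by the first step is an extension of $\Coker(d)$. Splicing these two identifications gives the asserted short exact sequence
\[
0\to \Coker(d)\to H_2(\BdSymp(\Sigma,\partial);\bQ)\to \bQ\oplus S^2_\bQ\bR\to 0.
\]

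The main obstacle I anticipate is bookkeeping the spectral sequence in the first step carefully enough to pin down $d$ on the nose — in particular verifying that the only differentials affecting total degree $4$ are the ones coming from $e+v$, that there are no unexpected extension problems in passing from $\overline{\overline{\BH}}$ to $\widetilde{\BH}$ to $\BH$, and that the stable-range hypothesis genuinely decouples the surface's contribution so that the ``error term'' really is $\Coker(d)$ with $d$ defined purely in terms of the Haefliger space (independent of $\Sigma$). A secondary technical point is checking that the identification of the Kotschick--Morita map with the $e+v$ edge map is compatible with their explicit construction via the extended flux homomorphism; this should follow from the fact that $v$ is the universal transverse volume class and that $\widetilde{\text{Flux}}$ is, at the level of classifying spaces, exactly the obstruction to lifting along $e+v$, but it requires unwinding the crossed-homomorphism description. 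Finally, I would double-check that $2$ and $3$ are the only primes that must be inverted (consistent with the $\bZ[\tfrac16]$ in \Cref{thm2'}) so that the rational statement is clean, and note the remark's generalization to subrings of $\bQ$ in which $\chi(\Sigma)$ is invertible drops out of the same argument.
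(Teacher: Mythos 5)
Your high-level strategy is the right one — reduce via \Cref{thm3} and (a) of the theorem after \Cref{prop1} to computing $H_4(\BH;\bQ)$, and then extract the short exact sequence from a Serre spectral sequence whose fiber is $\overline{\overline{\BH}}$ — but there are gaps that prevent the outline from actually producing the statement.

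First, the paper works with the \emph{single} fibration
\[
\overline{\overline{\BH}}\longrightarrow \BH\xrightarrow{\;\theta\times v\;}\mathrm{BSL}_2(\bR)\times K(\bR,2),
\]
rather than chaining $\overline{\overline{\BH}}\to\widetilde{\BH}\to\mathrm{BSL}_2(\bR)$ with $\widetilde{\BH}\to\BH\to K(\bR,2)$. Your two-step version can be made to work, but it obscures what $d$ actually is: in the paper $d$ is literally the differential $d_2\colon E^2_{2,3}\to E^2_{0,4}$ of this one spectral sequence, and the source is
\[
E^2_{2,3}=H_2\bigl(\mathrm{BSL}_2(\bR)\times K(\bR,2);\bQ\bigr)\otimes H_3(\overline{\overline{\BH}};\bQ)=(\bQ\oplus\bR)\otimes\bR=\bR\oplus(\bR\otimes\bR).
\]
Your reading of the two summands is off: the $\bR$ factor comes from $H_2(\mathrm{BSL}_2(\bR);\bQ)\cong\bQ$ (i.e.\ the Euler class) tensored with $H_3(\overline{\overline{\BH}};\bQ)\cong\bR$, and the $\bR\otimes\bR$ factor comes from $H_2(K(\bR,2);\bQ)\cong\bR$ (i.e.\ $v$) tensored with $H_3(\overline{\overline{\BH}};\bQ)$ — not from ``$v$'' and ``$e\cdot v$/$v^2$-type'' classes as you describe.

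Second, and more seriously, the spectral sequence computation cannot even be set up without three specific external facts that your proposal never invokes: (i) $\overline{\overline{\BH}}$ is $2$-connected (McDuff, \cite[Theorem 6.1]{mcduff1987applications}); (ii) $\pi_3(\overline{\overline{\BH}})\cong\bR$, which follows from Banyaga's perfectness result for the Calabi kernel (\cite[Corollary 1.3]{MR707329}) and is what makes $H_3(\overline{\overline{\BH}};\bQ)\cong\bR$ — without this you do not know the source of $d$ is $\bR\oplus(\bR\otimes\bR)$; and (iii) the existence of the class $a\in H^3(\overline{\overline{\BH}};\bR)$, constructed by McDuff from differential forms, that transgresses to $x^2$, which is exactly what pins down $d_4\colon E^4_{4,0}\to E^4_{0,3}$ as $m\circ\mathrm{proj}$ and hence identifies $\Ker(d_4)\cong\bQ\oplus S^2_{\bQ}\bR$, the target of the surjection. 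Your ``main obstacle I anticipate is bookkeeping'' understates this: without (ii) and (iii) there is no bookkeeping to do, because the $E^2$-page and the crucial transgression are simply unknown. So the proposal as written is a correct strategy sketch but has a genuine hole where the identification of $\pi_3(\overline{\overline{\BH}})$ and the transgression of the Calabi class should go.
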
 
\begin{rem}
Hence, Kotschick-Morita's problem for a surface of genus larger than $4$ is equivalent to showing $\Coker(d)=0$. Given our state of knowledge about foliations with transverse volume form, proving that $d$ is surjective seems to be very hard!
\end{rem}
Given \Cref{ttt}, we obtain that there is a surjective map
\[
\begin{tikzcd}
H_2(\Omega_{\bullet}^{\infty}\mathrm{MT}\theta;\bQ) \arrow[two heads, ""]{r} & \bQ\oplus S^2_{\bQ}\bR.
\end{tikzcd}
\]
Since $H_*(\Omega_{\bullet}^{\infty}\mathrm{MT}\theta;\bQ)$ is a simply connected Hopf algebra over rationals, we deduce that 
\[
\begin{tikzcd}
H_{2k}(\Omega_{\bullet}^{\infty}\mathrm{MT}\theta;\bQ) \arrow[two heads, ""]{r} & S^k(\bQ\oplus S^2_{\bQ}\bR).
\end{tikzcd}
\]
Hence, we obtain a different proof of the main theorem of Kotschick and Morita in \cite{kotschick2004characteristic}:
\begin{cor}
There is a surjective map
\[
\begin{tikzcd}
H_{2k}(\BdSymp(\Sigma,\partial);\bQ) \arrow[two heads, ""]{r} & S^k(\bQ\oplus S^2_{\bQ}\bR).
\end{tikzcd}
\]
for $g(\Sigma)\geq 3k$.
\end{cor}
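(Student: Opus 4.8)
The plan is to deduce this corollary formally from \Cref{thm3}, \Cref{ttt} (and, for $k=1$, directly from the Kotschick--Morita surjection), together with the structure theory of the rational homology of an infinite loop space; no new geometric input is required.

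First I would transport the question to the infinite loop space $\Omega_{\bullet}^{\infty}\mathrm{MT}\theta$. By \Cref{thm3}, for $g(\Sigma)\geq 3k$ the lower horizontal map of its defining square induces an isomorphism $H_{2k}(\BdSymp(\Sigma,\partial);\bQ)\xrightarrow{\ \cong\ } H_{2k}(\Omega_{\bullet}^{\infty}\mathrm{MT}\theta;\bQ)$ in the stable range of \Cref{thm1}. Hence it is enough to produce a surjection $H_{2k}(\Omega_{\bullet}^{\infty}\mathrm{MT}\theta;\bQ)\twoheadrightarrow S^{k}(\bQ\oplus S^2_{\bQ}\bR)$.

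Next I would use that $\Omega_{\bullet}^{\infty}\mathrm{MT}\theta$ is a path-connected infinite loop space, hence rationally a product of Eilenberg--MacLane spaces $\prod_{n\geq 1}K(\pi_n(\mathrm{MT}\theta)\otimes\bQ,n)$. Consequently $A:=H_*(\Omega_{\bullet}^{\infty}\mathrm{MT}\theta;\bQ)$ is the free graded-commutative $\bQ$-algebra $\mathrm{Sym}_{\bQ}(P)$ on the graded vector space of its primitives $P$, with $P_n\cong H_n(\mathrm{MT}\theta;\bQ)$ for $n\geq 1$; in particular $A_1=0$ (the Hopf algebra is simply connected, as already noted before the corollary) and every class in $A_2$ is primitive. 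Now \Cref{thm3} in degree $2$ identifies $A_2\cong H_2(\BdSymp(\Sigma,\partial);\bQ)$, and \Cref{ttt} --- applicable since $g(\Sigma)\geq 3k\geq 6\geq 4$ when $k\geq 2$, while for $k=1$ the corollary is literally the Kotschick--Morita surjection --- gives a surjection $H_2(\BdSymp(\Sigma,\partial);\bQ)\twoheadrightarrow \bQ\oplus S^2_{\bQ}\bR$. Composing, I obtain a surjection of the degree-$2$ primitives $P_2=A_2\twoheadrightarrow W$, where I write $W:=\bQ\oplus S^2_{\bQ}\bR$ placed in degree $2$.

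Finally I would choose a vector-space splitting $P_2=K\oplus W'$ with $W'\xrightarrow{\ \sim\ }W$, and form the surjective algebra homomorphism $A=\mathrm{Sym}_{\bQ}(P)\twoheadrightarrow \mathrm{Sym}_{\bQ}(W')\cong S^{\bullet}(W)$ which kills $K$ and all generators of $A$ of degree $\neq 2$. Restricting to homological degree $2k$ gives $H_{2k}(\Omega_{\bullet}^{\infty}\mathrm{MT}\theta;\bQ)\twoheadrightarrow S^{k}(W)=S^{k}(\bQ\oplus S^2_{\bQ}\bR)$, and precomposing with the isomorphism of \Cref{thm3} yields the claimed surjection out of $H_{2k}(\BdSymp(\Sigma,\partial);\bQ)$. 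The only substantive ingredient here is \Cref{ttt} (and, behind it, \Cref{thm3} and \Cref{thm2'}); the rest is formal. The two points that need a little care are that the degree-$2$ classes are genuinely primitive, so the degree-$2$ surjection propagates to an \emph{algebra} surjection, and that one must pass through $\Omega_{\bullet}^{\infty}\mathrm{MT}\theta$ before invoking freeness, since it is only the infinite loop space whose rational homology is a free graded-commutative Hopf algebra; the argument cannot be run directly on $\BdSymp(\Sigma,\partial)$.
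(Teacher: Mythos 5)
Your proposal is correct and follows essentially the same route as the paper: transport the problem to $\Omega_{\bullet}^{\infty}\mathrm{MT}\theta$ via \Cref{thm3}, feed in the degree-$2$ surjection onto $\bQ\oplus S^2_{\bQ}\bR$ coming from \Cref{ttt}/\Cref{thm2'}, and propagate it to degree $2k$ using that the rational homology of the simply connected infinite loop space is a free graded-commutative Hopf algebra on its primitives. Your extra care about primitivity in degree $2$ and about the range $g(\Sigma)\geq 3k$ (where one only has an epimorphism, which suffices) is consistent with, and slightly more explicit than, the paper's argument.
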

\subsection*{Outline} The paper is organized as follows: In \Cref{sec2}, we use McDuff's work on the volume preserving diffeomorphisms and Randal-Williams' work on homological stability for tangential structures to describe the group homology of $\dSymp(\Sigma,\partial)$ and $\deHam(\Sigma,\partial)$ in a range depending on the genus. In \Cref{sec3}, we study characteristic classes of surface bundles whose holonomy groups are area preserving which in particular leads us to give a homotopy theoretic proof of Kotschick-Morita's theorem \cite[Theorem 4]{kotschick2004characteristic} and partially answers their problem in \cite[Problem 23]{kotschick2004characteristic}.
\subsection*{Acknowledgment} I would like to thank S\o ren Galatius and Oscar Randal-Williams for many helpful discussions regarding this project. In the first formulation of \Cref{thm10}, there was a different description of the map \ref{map}. I am grateful to  Shigeyuki Morita who simplified my description as a multiple of the first MMM-class.  I am also thankful for the hospitality of the topology group in the Mathematical Institute of Universit{\"a}t M{\"u}nster during the period that this project was done. I would like to thank the referees for their comments that helped me to improve the presentation of my arguments.
\section{Homological stability}\label{sec2} In this section, we use the work of McDuff on volume preserving diffeomorphisms (\cite{mcduff1983local,MR707329}) and Randal-Williams' work  on homological stability of moduli spaces (\cite{randal2009resolutions})  to prove \Cref{thm1} and \Cref{thm2}.

 Let $(\Sigma,\omega)$ be a pair consisting of a surface $\Sigma$ with a nowhere zero $2$-form $\omega$ on $\Sigma$. Let $\Symp_{\omega}(\Sigma,\partial)$ denote the group of compactly supported $\omega$-preserving diffeomorphisms of the interior of $\Sigma$. Let $(\Sigma',\omega')$ be a pair where $\Sigma\subset \Sigma'$ is a subsurface and $\omega=\omega'|_{\Sigma}$ is the restriction of the volume form to $\Sigma$. There is a natural group homomorphism
\[
s(\Sigma,\Sigma'): \dSymp_{\omega}(\Sigma,\partial)\to \dSymp_{\omega'}(\Sigma',\partial)
\]
which is given by extending $\omega$-preserving diffeomorphisms of $\Sigma$ by the identity over $\Sigma'\backslash \Sigma$. \Cref{thm1} can be reformulated as follows
\begin{thm*}
The map
\[
H_*(\BdSymp_{\omega}(\Sigma,\partial);\bZ) \to H_*(\BdSymp_{\omega'}(\Sigma',\partial);\bZ)
\]
induced by $s(\Sigma,\Sigma')$ is an isomorphism for $*\leq (2g(\Sigma)-2)/3$ and epimorphism for $*\leq 2g(\Sigma)/3$.
\end{thm*}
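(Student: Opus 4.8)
The plan is to reduce \Cref{thm1} to homological stability for a moduli space of surfaces carrying a tangential structure, using a parametrized Mather--Thurston theorem of McDuff on one side and Randal-Williams' resolution machinery on the other. I would phrase the argument for $\Sigma = \Sigma_{g,1}$ and the elementary genus-one stabilization, and then observe that an arbitrary subsurface inclusion $\Sigma\subset\Sigma'$ with $\omega = \omega'|_\Sigma$ factors, up to the isomorphisms induced by collaring and by the mapping class group action, through iterates of gluing on a handle, a pair of pants, or a disk-with-hole, each of which is covered by the general stability result.

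\textbf{Step 1 (from discrete symplectomorphisms to structured surfaces).} In dimension two a compactly supported $\omega$-preserving diffeomorphism is exactly a volume-preserving one, so McDuff's parametrized Mather--Thurston theorem for volume-preserving diffeomorphisms (\cite{mcduff1983local,MR707329}) applies: the homotopy fibre of the natural map $\BdSymp_\omega(\Sigma,\partial)\to\mathrm{BSymp}_\omega(\Sigma,\partial)$ is homology equivalent to the space of compactly supported sections of the $\overline{\BH}$-bundle over $\Sigma$ associated to the frame bundle of $T\Sigma$, where $\overline{\BH}=\mathrm{hofib}(\theta)$. Since, by Moser's theorem \cite{MR0182927}, $\mathrm{BSymp}_\omega(\Sigma,\partial)\simeq\BDiff^+(\Sigma,\partial)$, which is $\mathrm{BMCG}(\Sigma,\partial)$ for $g\geq 2$ by Earle--Eells \cite{earle1969fibre}, I would assemble this into a homology equivalence between $\BdSymp_\omega(\Sigma,\partial)$ and the union of path components hit inside the Galatius--Randal-Williams moduli space $\BDiff^{\theta}(\Sigma,\partial)$ of surfaces equipped with a lift of the Gauss map along $\theta\colon\BH\to\mathrm{BSL}_2(\bR)\hookrightarrow\mathrm{BGL}_2^{+}(\bR)$ --- equivalently, a $\Gamma_2^{\mathrm{vol}}$-structure compatible with $\omega$ --- with the standard such structure fixed near $\partial\Sigma$.

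\textbf{Step 2 (identifying the stabilization and invoking stability).} Under this dictionary the extension-by-identity homomorphism $s(\Sigma,\Sigma')$ corresponds to the $\theta$-structured stabilization map: gluing on $\Sigma'\setminus\Sigma$ carrying the $\Gamma_2^{\mathrm{vol}}$-structure of $\omega'|_{\Sigma'\setminus\Sigma}$, which over a handle or pair of pants one may take to be a standard foliated model, compatibly with the given boundary data --- and here the hypothesis $\omega=\omega'|_\Sigma$ is precisely what makes the structures agree along the common boundary. It then remains to apply Randal-Williams' resolution-of-moduli-spaces theorem \cite{randal2009resolutions}: for a tangential structure satisfying its connectivity hypothesis --- which in our case amounts to $\overline{\BH}$ being simply connected --- the maps $H_*(\BDiff^\theta(\Sigma_{g,1},\partial))\to H_*(\BDiff^\theta(\Sigma_{g+1,1},\partial))$ are isomorphisms for $*\leq(2g-2)/3$ and epimorphisms for $*\leq 2g/3$, and the analogous maps increasing the number of boundary components (gluing on a pair of pants) are likewise isomorphisms in this range; transporting back along the homology equivalence of Step 1 gives \Cref{thm1}. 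The required simple-connectivity of $\overline{\BH}=\overline{\mathrm{B}\Gamma_2^{\mathrm{vol}}}$ is part of McDuff's theorem that $\overline{\mathrm{B}\Gamma_n^{\mathrm{vol}}}$ is highly connected.

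\textbf{Main obstacle.} I expect the crux to be Step 1: making McDuff's volume-preserving Mather--Thurston theorem precise enough, in parametrized form and with the right boundary conditions, to genuinely exhibit $\BdSymp_\omega(\Sigma,\partial)$ as a piece of a Galatius--Randal-Williams moduli space with tangential structure $\theta$; and, in parallel, confirming that the connectivity input demanded by \cite{randal2009resolutions} for the slope-$2/3$ range is indeed supplied by McDuff's results on $\overline{\mathrm{B}\Gamma_2^{\mathrm{vol}}}$ --- dimension two being the delicate case of that theory, because of the flux homomorphism. Once the identification of Step 1 is in hand, everything downstream is a black-box application of known homological stability for structured surfaces.
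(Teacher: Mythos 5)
Your proposal follows essentially the same route as the paper: McDuff's volume-preserving Mather--Thurston theorem identifies $\BdSymp_{\omega}(\Sigma,\partial)$ (up to homology) with a component of the moduli space of $\theta$-structured surfaces for $\theta\colon\BH\to\mathrm{BSL}_2(\bR)$, and Randal-Williams' stability machinery is then applied to that moduli space. The outline is correct.

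The one place where you underestimate the work is the sentence ``for a tangential structure satisfying its connectivity hypothesis --- which in our case amounts to $\overline{\BH}$ being simply connected.'' Sphericity/simple connectivity alone does not give the slope-$2/3$ range in \cite{randal2009resolutions}: the range there is governed by $k$-triviality, i.e.\ by the genus at which the $\theta$-structure stabilizes, and the optimal (mapping-class-group) range $*\leq(2g-2)/3$ only comes out if the structure stabilizes at genus $0$. Verifying this is the one genuinely non-formal step beyond the McDuff identification, and it is exactly where the disconnectedness you gesture at (``the union of path components hit'') has to be controlled: one computes $\pi_0(\text{Bun}_{\partial}(T\Sigma,\theta^*\gamma))\cong H_2(\overline{\BH};\bZ)\cong\bR$, detected by the total volume (this is where simple connectivity of $\overline{\BH}$ and the $3$-connectivity of $\bar v\colon\overline{\BH}\to K(\bR,2)$ actually enter), checks that the mapping class group acts trivially on this set of components (every mapping class has a volume-preserving representative), and observes that the stabilization maps act by addition of volumes, so that components stabilize from genus $0$ on. Without this your argument still yields stability, but not with the stated range. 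Everything else --- the equivariant point-set model for the homotopy fibre, the factorization of a general subsurface inclusion into elementary stabilizations --- matches the paper.
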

Let $\Sigma$ be a surface with boundary. We treat the case where $\Sigma$ is a closed surface separately in \Cref{{closedsurface}}.  Given the observation of Kotschick and Morita  in \cite [Section 2.1]{kotschick2005signatures}, that the group $\dSymp_{\omega}(\Sigma,\partial)$ is perfect for $g(\Sigma)\geq 3$, we can consider the Quillen plus construction of $\BdSymp_{\omega}(\Sigma,\partial)$ for $g(\Sigma)\geq 3$. As we shall see there exists a model for the plus construction of $\BdSymp_{\omega}(\Sigma,\partial)$ to which the general homological stability theorem in \cite[Theorem 7.1]{randal2009resolutions} can be applied.  To describe this model, we first need to recall a theorem due to McDuff.

\subsection{Recollection from McDuff's work on  volume preserving diffeomorphisms} Let $\overline{\BSymp_{\omega}(\Sigma,\partial)}$ denote the homotopy fiber of the map
\[
\BdSymp_{\omega}(\Sigma,\partial)\to \BSymp_{\omega}(\Sigma,\partial)
\]
induced by the identity homomorphism. 
\begin{rem} In fact, in this case we can describe the homotopy fiber more concretely. Recall from the introduction that $\Symp_{\omega}(\Sigma,\partial)\simeq \text{MCG}(\Sigma,\partial)$. Hence, we have the following fiber sequence
\[
\BdSymp_{0,\omega}(\Sigma,\partial)\to \BdSymp_{\omega}(\Sigma,\partial)\to \mathrm{B}\text{MCG}(\Sigma,\partial),
\]
where $\Symp_{0,\omega}(\Sigma,\partial)$ is the identity component of the topological group $\Symp_{\omega}(\Sigma,\partial)$. We obtain a map
\[
\overline{\BSymp_{\omega}(\Sigma,\partial)}\to \BdSymp_{0,\omega}(\Sigma,\partial)
\]
which is a homotopy equivalence. 
\end{rem}
The action of $\dSymp_{\omega}(\Sigma,\partial)$ on $\Sigma$ gives the following surface bundle
\[
 \begin{tikzpicture}[node distance=2cm, auto]
  \node (A) {$ \Sigma$};
  \node (B) [ right of=A] {$\Sigma\hcoker \dSymp_{\omega}(\Sigma,\partial)$};
  \node (D) [below of=B, node distance=1.3cm] {$\BdSymp_{\omega}(\Sigma,\partial),$};
  \draw [->] (A) to node {$$} (B);
  \draw [->] (B) to node {$\pi$} (D);
\end{tikzpicture}
\]
whose holonomy group is $\dSymp_{\omega}(\Sigma,\partial)$. Therefore it is a foliated (flat) $\Sigma$-bundle whose holonomy preserves the volume form of the fibers. The normal bundle to the foliation is the vertical tangent bundle of $\pi$. By the general theory of Haefliger (\cite{haefliger1971homotopy}), the foliation on the total space induces a map well defined up to homotopy
\[
\Sigma\hcoker \dSymp_{\omega}(\Sigma,\partial)\to \BH.
\]
 If we pull back this foliated bundle to $\overline{\BSymp_{\omega}(\Sigma,\partial)}$, we obtain the product bundle
 \[
 \begin{tikzpicture}[node distance=2cm, auto]
  \node (A) {$ \overline{\BSymp_{\omega}(\Sigma,\partial)}\times \Sigma$};
  \node (B) [node distance=3.2cm, right of=A] {$\Sigma\hcoker \dSymp_{\omega}(\Sigma,\partial)$};
  \node (C)[below of=A, node distance=1.3cm] {$ \overline{\BSymp_{\omega}(\Sigma,\partial)}$};
  \node (D) [below of=B, node distance=1.3cm] {$\BdSymp_{\omega}(\Sigma,\partial),$};
  \draw [->] (A) to node {$$} (B);
    \draw [->] (A) to node {$$} (C);
      \draw [->] (C) to node {$$} (D);
  \draw [->] (B) to node {$$} (D);
\end{tikzpicture}
\]
 with a foliation $\mathcal{F}$  transverse to the fibers $\{ x\}\times \Sigma$ (see \cite{mcduff1983local,MR707329} for more details). Since this bundle is trivial, the normal bundle to the foliation $\mathcal{F}$ is induced by the pull back of the tangent bundle $T\Sigma$ via the projection  $\overline{\BSymp_{\omega}(\Sigma,\partial)}\times \Sigma\to \Sigma$. Hence, we have the following homotopy commutative diagram
\begin{equation}\label{eq1}
\begin{gathered}
 \begin{tikzpicture}[node distance=2cm, auto]
  \node (A) {$ \overline{\BSymp_{\omega}(\Sigma,\partial)}\times \Sigma$};
  \node (B) [node distance=3.4cm, right of=A] {$\mathrm{B}\Gamma_{2}^{\text{vol}}$};
  \node (C) [below of=A, node distance=1.3cm] {$\Sigma$};  
  \node (D) [below of=B, node distance=1.3cm] {$\mathrm{BSL}_2(\bR).$};
  \draw[->] (C) to node {$\tau$} (D);
  \draw [->] (A) to node {$F$} (B);
    \draw [->] (A) to node {$$} (C);
  \draw [->] (B) to node {$\theta$} (D);
\end{tikzpicture}
\end{gathered}
\end{equation}
For the point-set model of the diagram \ref{eq1} see \cite{mcduff1979foliations} and \cite[Section 5.1]{nariman2014homologicalstability}. Let $\text{Sect}(\Sigma)$ be the space of sections of $\tau^*(\theta)$, the pullback of  $\mathrm{B}\Gamma_{2}^{\text{vol}}$ over $\Sigma$. After 
choosing a base section $s_0$ for $\text{Sect}(\Sigma)$, one can define $\text{Sect}(\Sigma,\partial)$ to be those sections that are equal to $s_0$ in the germ of the boundary (In fact in the point-set model, there is a canonical base section $s_0$ defined by the foliation by points on the surface). For any $x\in  \overline{\BSymp_{\omega}(\Sigma,\partial)}$, the map $F|_{x\times \Sigma}$ is a lifting of the map $\tau$ to $\mathrm{B}\Gamma_{2}^{\text{vol}}$, hence we obtain a map
\[
f_{\Sigma}: \overline{\BSymp_{\omega}(\Sigma,\partial)}\to \text{Sect}(\Sigma,\partial).
\]
The section space $\text{Sect}(\Sigma,\partial)$ is not connected and in fact $\pi_0(\text{Sect}(\Sigma,\partial))=\bR$ which is given by the integration of $\omega$ over the surface. Let $\text{Sect}_0(\Sigma,\partial)$ denote the base component.
\begin{thm}[McDuff \cite{MR707329}]\label{mcduff}
The map
\[
f_{\Sigma}: \overline{\BSymp_{\omega}(\Sigma,\partial)}\to \text{\textnormal{Sect}}_0(\Sigma,\partial).
\]
induces a homology isomorphism. 
\end{thm}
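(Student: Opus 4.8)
This is a \emph{Mather--Thurston type} statement for area-preserving diffeomorphisms, and the plan is to prove it by following McDuff's strategy in \cite{MR707329}: localize the assertion over a handle decomposition of $\Sigma$ until it reduces to the case of a disk, and then treat the disk by Thurston's simplicial method. The feature that makes the localization possible is that the map $f_\Sigma$, being produced by the foliated trivial bundle over $\overline{\BSymp_\omega(\Sigma,\partial)}$ as in the diagram \ref{eq1}, is natural with respect to extending $\omega$-preserving diffeomorphisms by the identity across a larger surface.

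\textbf{Globalization.} Both sides satisfy a Mayer--Vietoris property with respect to cutting $\Sigma$ along a properly embedded arc $A$ into two subsurfaces $\Sigma'$ and $\Sigma''$. For the target this is essentially the definition of a section: $\text{Sect}_0(\Sigma,\partial)$ is the homotopy pullback of $\text{Sect}_0(\Sigma',\partial)$ and $\text{Sect}_0(\Sigma'',\partial)$ over the section space of a neighbourhood of $A$. For the source the corresponding statement is the \emph{fragmentation} property of McDuff \cite{mcduff1983local}: every $\omega$-preserving diffeomorphism of $\Sigma$ isotopic to the identity is a product of $\omega$-preserving diffeomorphisms each supported in one element of a fixed finite cover of $\Sigma$ by embedded disks. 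Organizing fragmentation into a semi-simplicial resolution of $\BdSymp_{0,\omega}(\Sigma,\partial)$ whose terms involve only the spaces $\BdSymp_\omega(U,\partial)$ with $U$ a disjoint union of disks, and comparing via the maps $f_U$ with the corresponding \v{C}ech description of the section space, an induction over a handle decomposition of $\Sigma$ reduces the theorem to the case $\Sigma = D^2$. By Moser's theorem \cite{MR0182927} we may also assume $\omega$ is the standard area form.

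\textbf{The local case.} It remains to prove that
\[
f_{D^2}\colon \overline{\BSymp_\omega(D^2,\partial)}\longrightarrow \text{Sect}_0(D^2,\partial)\simeq \Omega^2_0\,\overline{\BH}
\]
is a homology isomorphism; here the identification of the right-hand side uses that over the contractible disk the fibration $\tau^*\theta$ is trivial with fibre the homotopy fibre $\overline{\BH}$ of $\theta$, so sections rel the boundary form $\Omega^2\overline{\BH}$. For this one runs Thurston's argument in its volume-preserving form. Out of foliated $D^2$-bundles transverse to the fibres one builds a semi-simplicial space whose geometric realization receives a homology equivalence from $\overline{\BSymp_\omega(D^2,\partial)}$; this step uses the area-preserving incarnation of the Mather--Thurston machinery, in particular Mather's ``infinite repetition'' vanishing results adapted to the setting in which the area form is kept fixed, which is the technical heart of \cite{MR707329}. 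The holonomy (scanning) construction maps the same semi-simplicial space to $\Omega^2_0\overline{\BH}$, and one checks this is a homology equivalence using that $\overline{\BH}$ is highly connected together with the fact that $\Gamma_2^{\text{vol}}$-structures whose normal bundle is pulled back from the base correspond exactly to sections of $\tau^*\theta$.

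\textbf{Main obstacle.} The serious difficulty is concentrated entirely in the local case, namely establishing the volume-preserving analogue of Mather's perfectness and vanishing results for compactly supported diffeomorphisms of $\bR^2$; the proof combines Mather's arguments with the Moser deformation that carries the area form along each construction, and this is precisely the work done in \cite{mcduff1983local,MR707329}. By contrast the globalization step is a formal homotopy-colimit comparison once fragmentation and the disk case are in hand.
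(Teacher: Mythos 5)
Your outline matches how this statement actually functions in the paper: \Cref{mcduff} is quoted as McDuff's theorem (obtained there as a corollary of \Cref{mcduff1}), and the paper gives no proof of its own, so there is nothing to diverge from. Your two-step reconstruction of McDuff's argument --- globalization by volume-preserving fragmentation over a handle decomposition, plus the local statement identifying $\overline{\BSymp_{\omega}(D^2,\partial)}$ with $\Omega^2_0\overline{\BH}$ in homology --- is an accurate summary of the strategy of \cite{mcduff1983local,MR707329}, but as you yourself acknowledge, the substantive content (volume-preserving fragmentation and the local Mather--Thurston theorem for area-preserving diffeomorphisms of the disk) is precisely what those papers prove, so what you have written is an annotated citation rather than an independent proof; that is exactly the level at which the present paper uses the result.
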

Using obstruction theory, one can show that $\pi_1(\text{Sect}_0(\Sigma,\partial))$ is a nilpotent group and sits in a short exact sequence
\[
0\to \bR\to \pi_1(\text{Sect}_0(\Sigma,\partial))\to H^1(\Sigma,\partial;\bR)\to 0.
\]
Hence we have a map
\[
h: \text{Sect}_0(\Sigma,\partial)\to \mathrm{B}H^1(\Sigma,\partial;\bR).
\]
Let $\widetilde{\text{Sect}_0(\Sigma,\partial)}$ be the homotopy fiber of $h$. In fact, McDuff obtained \Cref{mcduff} as a corollary of the following 
\begin{thm}\label{mcduff1}
There is a map
\[
\tilde{f_{\Sigma}}: \BdHam_{\omega}(\Sigma,\partial)\to \widetilde{\text{\textnormal{Sect}}_0(\Sigma,\partial)}
\]
that induces a homology isomorphism. 
\end{thm}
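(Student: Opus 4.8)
The plan is to reprove this theorem of McDuff \cite{MR707329} by a Mather--Thurston argument adapted to the area-preserving setting, proving the Hamiltonian statement first and recovering \Cref{mcduff} from it afterwards (which is the natural direction).

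First I would establish the \emph{local} case: for compactly supported $\omega$-preserving diffeomorphisms of an open disk $D^2$, the homotopy fibre $\overline{\BSymp_\omega(D^2,\partial)}$ is homology-equivalent to the section space, rel boundary, of the $\BH$-bundle over $D^2$ obtained by pulling back $\theta\colon\BH\to\mathrm{BSL}_2(\bR)$ along the (necessarily trivial) tangential classifying map of $D^2$; equivalently, to $\Omega^2\overline{\BH}$. The inputs are McDuff's flexibility results for volume-preserving diffeomorphisms of the disk --- the microflexibility of the relevant sheaf of $\omega$-preserving embeddings compatible with the transverse-volume Haefliger structure --- together with the fact (Thurston, McDuff) that $\overline{\BH}$ is highly connected.

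Next I would \emph{spread out}: a Segal-style semi-simplicial resolution of $\Sigma$ by configurations of such disks (in the spirit of \cite{mcduff1983local}) propagates the local equivalence to $\Sigma$, yielding that $\overline{\BSymp_\omega(\Sigma,\partial)}$ has the homology of $\text{Sect}_0(\Sigma,\partial)$, i.e.\ \Cref{mcduff}. To obtain the Hamiltonian refinement I would run this relative to the flux. By obstruction theory $\pi_1(\text{Sect}_0(\Sigma,\partial))$ is the (central, hence nilpotent) extension $0\to\bR\to\pi_1(\text{Sect}_0(\Sigma,\partial))\to H^1(\Sigma,\partial;\bR)\to0$ defining $h$; one checks from the construction of $f_\Sigma$ via the foliated product bundle, together with the description of $\text{Flux}(\phi)$ as the integral of $\omega$ over the $2$-chains $(s,t)\mapsto\phi_t(\alpha(s))$, that on $\pi_1$ the composite $h\circ f_\Sigma$ sends $\phi\in\dSymp_{0,\omega}(\Sigma,\partial)$ to $\text{Flux}(\phi)$. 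Hence $f_\Sigma$ covers the classifying map of the $H^1$-bundle $\overline{\BSymp_\omega(\Sigma,\partial)}\simeq\BdSymp_{0,\omega}(\Sigma,\partial)\to\mathrm{B}H^1(\Sigma,\partial;\bR)$ arising from the flux short exact sequence $1\to\dHam_\omega(\Sigma,\partial)\to\dSymp_{0,\omega}(\Sigma,\partial)\to H^1(\Sigma,\partial;\bR)\to1$ (the quotient taken discrete), and restricting to homotopy fibres produces $\tilde{f_{\Sigma}}\colon\BdHam_\omega(\Sigma,\partial)\to\widetilde{\text{Sect}_0(\Sigma,\partial)}$. Running the flexibility and spreading arguments for the flux-zero subgroup then shows that $\tilde{f_{\Sigma}}$ is itself a homology isomorphism. (Conversely, granting this, \Cref{mcduff} follows by a Serre spectral sequence comparison over the common base $\mathrm{B}H^1(\Sigma,\partial;\bR)$, legitimate since the section-side fibration is nilpotent.)

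I expect the main obstacle to be the local h-principle: Thurston's fragmentation-and-wiggling argument for the full diffeomorphism group does not respect the area form, and repairing it needs McDuff's delicate analysis of volume-preserving isotopies and the connectivity of $\overline{\BH}$. A secondary, structural difficulty is keeping the construction compatible with the $\bR$-extension on both sides simultaneously --- with the Calabi homomorphism $\dHam_\omega(\Sigma,\partial)\to\bR$ on the group side and with the projection $\pi_1(\text{Sect}_0(\Sigma,\partial))\to H^1(\Sigma,\partial;\bR)$ on the section side --- so that the equivalence descends to $\BdHam_\omega(\Sigma,\partial)$ and $\widetilde{\text{Sect}_0(\Sigma,\partial)}$. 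This is precisely what forces one to prove the Hamiltonian statement directly rather than to divide out an $\bR$ at the end over $\dSymp_{0,\omega}(\Sigma,\partial)$, which is neither perfect nor nilpotent and over which the required ``total-and-base implies fibre'' comparison fails in general.
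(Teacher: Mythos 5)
This statement is not proved in the paper at all: it is quoted verbatim as a theorem of McDuff from \cite{MR707329}, and the text immediately preceding it says that \Cref{mcduff} is obtained by McDuff \emph{as a corollary} of it. So there is no in-paper argument to compare yours against; what you have written is an attempted reconstruction of McDuff's proof. As such a reconstruction it points in the right direction --- the overall shape of McDuff's argument is indeed ``local statement for the disk, propagated over $\Sigma$ by a Segal/McDuff resolution, refined along the flux'' --- and your parenthetical deduction of \Cref{mcduff} from \Cref{mcduff1} by comparing Serre spectral sequences over $\mathrm{B}H^1(\Sigma,\partial;\bR)$ (using the flux extension $1\to\dHam_\omega\to\dSymp_{0,\omega}\to H^1(\Sigma,\partial;\bR)\to 1$ against the fibration $\widetilde{\text{Sect}_0}\to\text{Sect}_0\xrightarrow{h}\mathrm{B}H^1$) matches how the two statements are related.

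That said, as a proof your proposal has two genuine gaps, both of which are exactly the content of McDuff's papers and cannot be waved at. First, the local case for $(D^2,\omega)$ --- that $\mathrm{B}\dSymp_\omega(D^2,\partial)$ has the homology of a component of $\Omega^2\overline{\BH}$ --- is the volume-preserving Mather--Thurston theorem; ``microflexibility of the relevant sheaf'' is named but not established, and this is where Thurston's fragmentation argument genuinely breaks for area forms. Second, the identification of $h\circ f_\Sigma$ on $\pi_1$ with the flux is a nontrivial computation (in this paper the analogous compatibility, needed for \Cref{thm2}, is only obtained by invoking Kotschick--Morita's formulas relating $F^*(e+v)$ to the flux); without it you cannot restrict $f_\Sigma$ to homotopy fibres to produce $\tilde{f_\Sigma}$ at all. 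There is also an internal inconsistency of order: you announce that you will prove the Hamiltonian statement first, but your spreading argument as written produces the $\Symp_0$ statement (\Cref{mcduff}) first and only then refines it; to genuinely run the resolution ``for the flux-zero subgroup'' you would need a local-to-global mechanism compatible with the Calabi $\bR$ on the group side and with $\pi_2(\overline{\BH})$ and $\pi_3(\overline{\BH})$ on the section side, which you acknowledge but do not supply.
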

\subsection{The tangential $\theta$-structures}\label{tang} To describe a point-set model for the section space on which $\Symp_{\omega}(\Sigma,\partial)$ acts, we shall recall the space of tangential structures.  Let $B$ be any topological space. For a map $\alpha: B\to \mathrm{BSL}_2(\bR)$ that is a fibration, let $\text{Bun}_{\partial}(T\Sigma,{\alpha}^*\gamma)$ denote the space of all bundle maps $T\Sigma\rightarrow {\alpha}^*\gamma$ from the tangent bundle of $\Sigma$ to ${\alpha}^*(\gamma)$ that are standard on a germ of the boundary and equipped with the compact-open topology (See  \cite[Section 1.1]{randal2009resolutions} for what it means to be standard near the boundary). The whole diffeomorphism group  $\Diff(\Sigma,\partial)$ acts on bundle maps by precomposing a bundle map with the differential of a diffeomorphism and we shall restrict this action to the volume preserving diffeomorphisms. 
\begin{defn}\label{moduli}
The moduli space of $\alpha$-tangential structure $\mathcal{M}^{\alpha}(\Sigma,\partial)$ is defined to be
\[
\text{Bun}_{\partial}(T\Sigma,{\alpha}^*\gamma)\hcoker \Symp_{\omega}(\Sigma,\partial).
\] 
\end{defn}

Now consider the tangential structure $\theta:\BH\to \mathrm{BSL}_2(\bR)$. Recall  $\gamma$ is the tautological $2$-plane bundle over $\mathrm{BSL}_2(\bR)$. One can define a map between $ \text{Sect}(\Sigma,\partial)$ and $\text{Bun}_{\partial}(T\Sigma,{\theta}^*\gamma)$ as follows. First fix an isomorphism between $T\Sigma$ and $\tau^*\gamma$. Every section $s\in   \text{Sect}(\Sigma,\partial)$ gives a map $s:\Sigma\to \BH$ such that ${\theta}\circ s=\tau$. Hence, we obtain a bundle map $s^*: T\Sigma\to {\theta}^*\gamma$. It is easy to prove that  the map that associates a bundle map to a section
 \[
 \text{Sect}(\Sigma,\partial)\to \text{Bun}_{\partial}(T\Sigma,{\theta}^*\gamma),
 \]
 is a weak homotopy equivalence (see \cite[Section 3.2]{MR3044208}). The advantage of the section space model is that it is easier to study its homotopy type in this model. On the other hand, a priori there is no natural action of $\Symp_{\omega}(\Sigma,\partial)$ on the space of sections.  But as we discussed above, there is a natural action of $\Symp_{\omega}(\Sigma,\partial)$ on the bundle maps. Because we want to keep track of actions, we use the bundle maps model and when we want to calculate its homotopy groups, we use the section space model.

For a volume preserving embedding $(\Sigma,\omega)\hookrightarrow (\Sigma',\omega')$, the canonical base section $s'_0\in \text{Sect}(\Sigma',\partial)$ restricts to the base section $s_0\in \text{Sect}(\Sigma,\partial)$, thus we have a map
\[
\text{Bun}_{\partial}(T\Sigma,{\theta}^*\gamma)\to \text{Bun}_{\partial}(T\Sigma',{\theta}^*\gamma),
\]
that is equivariant with respect to the inclusion $\Symp_{\omega}(\Sigma,\partial)\to \Symp_{\omega'}(\Sigma',\partial)$. Hence, we obtain a stabilization map
\[
\mathcal{M}^{\theta}(\Sigma,\partial)\to \mathcal{M}^{\theta}(\Sigma',\partial),
\]
by extending over $\Sigma'\backslash\Sigma$ via the base section. 

 To prove \Cref{thm1}, we  relate the plus construction of $\BdSymp_{\omega}(\Sigma,\partial)$ to $\mathcal{M}^{\theta}(\Sigma,\partial)$  and then use the main theorem in \cite[Theorem 7.1]{randal2009resolutions}.  
%

\begin{proof}[Proof of \Cref{thm1}] Let $\overline{\BSymp_{\omega}(\Sigma,\partial)}$ be the homotopy fiber of the map 
\[
\iota: \BdSymp_{\omega}(\Sigma,\partial)\to \BSymp_{\omega}(\Sigma,\partial).
\]
A model for this homotopy fiber on which the group $\Symp_{\omega}(\Sigma,\partial)$ acts, is the pullback of the universal $\Symp_{\omega}(\Sigma,\partial)$-bundle $$\mathrm{E}\Symp_{\omega}(\Sigma,\partial)\to \BSymp_{\omega}(\Sigma,\partial),$$ via the map $\iota$. This pullback is a principal $\Symp_{\omega}(\Sigma,\partial)$- bundle over the base space $\BdSymp_{\omega}(\Sigma,\partial)$, hence the group $\Symp_{\omega}(\Sigma,\partial)$ acts on it.  Note that given that this action is free, the homotopy quotient of this action 
\[
\overline{\BdSymp_{\omega}(\Sigma,\partial)}\hcoker \Symp_{\omega}(\Sigma,\partial),
\]
is homotopy equivalent to the quotient of the action which is homeomorphic to the base space where in this case is homotopy equivalent to $\BdSymp_{\omega}(\Sigma,\partial)$. 
As explained in \cite[Section 5.1]{nariman2014homologicalstability} or \cite[Section 1.2.2]{nariman2015stable}, there is a point-set model for the map in \Cref{mcduff}
\[
f_{\Sigma}: \overline{\BSymp_{\omega}(\Sigma,\partial)}\to \text{Bun}_{\partial}(T\Sigma,{\theta}^*\gamma),
\]
that is $\Symp_{\omega}(\Sigma,\partial)$-equivariant. Hence, we obtain a map
\[
\overline{\BSymp_{\omega}(\Sigma,\partial)}\hcoker \Symp_{\omega}(\Sigma,\partial)\to \mathcal{M}^{\theta}(\Sigma,\partial).
\]
Recall that $\text{Bun}_{\partial}(T\Sigma,{\theta}^*\gamma)$ is not connected but the action of $\Symp_{\omega}(\Sigma,\partial)$ preserves the connected components. Let $\mathcal{M}_0^{\theta}(\Sigma,\partial)$ be the base component. Thus, McDuff's theorem implies that there is a zig-zag of maps
\[
\BdSymp_{\omega}(\Sigma,\partial)\leftarrow \overline{\BdSymp_{\omega}(\Sigma,\partial)}\hcoker \Symp_{\omega}(\Sigma,\partial) \to \mathcal{M}_0^{\theta}(\Sigma,\partial),
\]
that are homology isomorphisms. By naturality of our constructions, it is easy to see that the above homology isomorphisms commute with the stabilization maps. 

To prove that $\mathcal{M}_0^{\theta}(\Sigma,\partial)$ exhibits homological stability, we show that $\pi_0(\mathcal{M}^{\theta}(\Sigma,\partial))$ is stable as the genus increases and $\mathcal{M}^{\theta}(\Sigma,\partial)$ is also homologically stable. Randal-Williams' theorem (\cite[Theorem 7.1]{randal2009resolutions}), however, implies that $\mathcal{M}^{\theta}(\Sigma,\partial)$ exhibits homological stability if the connected components $\pi_0(\mathcal{M}^{\theta}(\Sigma,\partial))$ stabilize with respect to the genus. Therefore, we only need to show that $\pi_0(\mathcal{M}^{\theta}(\Sigma,\partial))$ is stable. To do so, consider the fibration
\[
\text{Bun}_{\partial}(T\Sigma,{\theta}^*\gamma)\to \mathcal{M}^{\theta}(\Sigma,\partial)\to \BSymp_{\omega}(\Sigma,\partial).
\]
The relevant part of the long exact sequence of the homotopy groups is
   \[
  \pi_1(\BSymp_{\omega}(\Sigma,\partial))\to\pi_0(\text{Bun}_{\partial}(T\Sigma,{\theta}^*\gamma))\to\pi_0(\mathcal{M}^{\theta}(\Sigma,\partial))\to\pi_0(\BSymp_{\omega}(\Sigma,\partial)).
   \]
Since $\Sigma$ has a boundary, the tangent bundle can be trivialized. Therefore, in the section space model for the bundle maps $\text{Bun}_{\partial}(T\Sigma,{\theta}^*\gamma)$ the map $\tau$ is null-homotopic. Hence, $\mathrm{Sect}(\Sigma,\partial)$ is in fact homotopy equivalent to the mapping space $\mathrm{Map}_{\partial}(\Sigma,\overline{\mathrm{B}\Gamma_2^{\text{vol}}})$ where $\overline{\mathrm{B}\Gamma_2^{\text{vol}}}$ is the homotopy fiber of the map $\theta$. Given that $\Sigma$ is $2$ dimensional and $\overline{\mathrm{B}\Gamma_2^{\text{vol}}}$ is simply connected (\cite{mcduff1981groups}), we have $$\pi_0(\text{Bun}_{\partial}(T\Sigma,{\theta}^*\gamma))=\pi_0(\mathrm{Map}_{\partial}(\Sigma,\overline{\mathrm{B}\Gamma_2^{\text{vol}}}))=H_2(\overline{\mathrm{B}\Gamma_2^{\text{vol}}}; \bZ).$$ The volume form induces a map 
\[
\bar{v}: \overline{\mathrm{B}\Gamma_2^{\text{vol}}}\to K(\bR,2),
\]
which is known from \cite{mcduff1981groups} to be $3$-connected. Therefore $H_2(\overline{\mathrm{B}\Gamma_2^{\text{vol}}}; \bZ)\cong \bR$. More concretely,  let  $f_1$ and $f_2$ be bundle maps in  $\text{Bun}_{\partial}(T\Sigma,{\theta}^*\gamma)$. We consider them as lifts of the tangent bundle to $\mathrm{B}\Gamma_2^{\text{vol}}$. They are in the same component of $\text{Bun}_{\partial}(T\Sigma,{\theta}^*\gamma)$ if the volumes of surface $\Sigma$ given by the two forms $f_i^*(\bar{v})$ are equal. Given that the every of the mapping class group $\text{MCG}(\Sigma,\partial)$ can be realized as a volume preserving diffeomorphism, the action of the mapping class group $\text{MCG}(\Sigma,\partial)$ on the set of components is trivial. Therefore we have $$\pi_0(\mathcal{M}^{\theta}(\Sigma,\partial))=\pi_0(\text{Bun}_{\partial}(T\Sigma,{\theta}^*\gamma))=H_2(\overline{\mathrm{B}\Gamma_2^{\text{vol}}}; \bZ)=\bR.$$ Thus,  the connected components of $\mathcal{M}^{\theta}(\Sigma)$ stabilize and the stabilization map or gluing surfaces along the boundary components corresponds to the addition of classes in $H_2(\overline{\mathrm{B}\Gamma_2^{\text{vol}}})$. 

To find a stability range, Randal-Williams defined a notion of $k$-triviality \cite[Definition 6.2]{randal2009resolutions} and proved that if a $\theta$-structure stabilizes at genus $h$, then it would be $(2h+1)$-trivial. Since $\theta$-structure stabilizes at genus $0$, by \cite[Theorem 7.1]{randal2009resolutions} the stability range for stabilization maps  is the same as the stability range for the orientation structure $\mathrm{BSO}(2)\to \mathrm{BO}(2)$. Therefore, the groups $\dSymp_{\omega}(\Sigma,\partial)$ have the same stability range as the mapping class groups.
\end{proof}
\begin{proof}[Proof of \Cref{thm2}] 
To recall the setup, let $(\Sigma,\omega)\hookrightarrow (\Sigma', \omega')$ be a volume preserving embedding such that the volumes of $\Sigma$ and $\Sigma'$ are normalized to be the negative of the Euler numbers respectively. This volume preserving embedding induces a stabilization map
\[
H_*(\BdeHam_{\omega}(\Sigma,\partial);\bZ)\to H_*(\BdeHam_{\omega'}(\Sigma',\partial);\bZ),
\]
which we want to prove is an isomorphism for $*\leq (2g(\Sigma)-2)/3$ and epimorphism for $*\leq 2g(\Sigma)/3$. To do so, similar to the proof of \Cref{thm1}, we show that $\BdeHam_{\omega}(\Sigma,\partial)$ is homology equivalent to a moduli space of a tangential structure whose $\pi_0$ stabilizes with respect to the genus. Recall that the extended Hamiltonian group $\eHam_{\omega}(\Sigma,\partial)$ hits all the connected components of $\Symp_{\omega}(\Sigma,\partial)$ and similar to $\Symp_{\omega}(\Sigma,\partial)$,  has contractible connected components.  Note that the  group extension 
\[
1\to\dHam(\Sigma,\partial)\to \deHam(\Sigma,\partial)\to \text{MCG}(\Sigma,\partial)\to 1,
\]
induces the fibration sequence
\[
\BdHam(\Sigma,\partial)\to \BdeHam(\Sigma,\partial)\to \mathrm{B}\text{MCG}(\Sigma,\partial)\simeq \BSymp_{\omega}(\Sigma,\partial).
\]
Therefore, the space $\BdHam_{\omega}(\Sigma,\partial)$ is the homotopy fiber of the map
\[
\BdeHam_{\omega}(\Sigma,\partial)\to \BSymp_{\omega}(\Sigma,\partial),
\]
which is induced by the identity homomorphism $\deHam_{\omega}(\Sigma,\partial)\to \eHam(\Sigma,\partial)$ and then including into $\Symp_{\omega}(\Sigma,\partial)$. Hence, similar to the proof of \Cref{thm1}, there is a point-set model for $\BdHam(\Sigma,\partial)$ on which $\Symp_{\omega}(\Sigma,\partial)$ acts  and the induced map from the homotopy quotient of this action to the quotient space
\begin{equation}\label{11}
\BdHam_{\omega}(\Sigma,\partial)\hcoker \Symp_{\omega}(\Sigma,\partial)\xrightarrow{\simeq} \BdeHam_{\omega}(\Sigma,\partial),
\end{equation}
is a homotopy equivalence. 

 On the other hand by \Cref{mcduff1}, we have a homotopy commutative diagram
\begin{equation}\label{Y}
  \begin{gathered}
 \begin{tikzpicture}[node distance=2cm, auto]
  \node (A) {$\BdHam_{\omega}(\Sigma,\partial)$};
  \node (B) [below of=A]{$\widetilde{\text{\textnormal{Sect}}_0(\Sigma,\partial)}$};
  \node (C) [right of=A, node distance=3cm]{$\BdSymp_{0,\omega}(\Sigma,\partial)$};
  \node (D) [right of=B, node distance=3cm]{$ \text{\textnormal{Sect}}_0(\Sigma,\partial)$};
  \node (E) [right of=C, node distance=3.6cm]{$\mathrm{B}H^1(\Sigma,\partial;\bR)$};
  \node (F) [below of=E]{$\mathrm{B}H^1(\Sigma,\partial;\bR).$};
  \draw [->] (A) to node {$$} (C);
  \draw [->] (B) to node {$$} (D);
  \draw [->] (A) to node {$\tilde{f_{\Sigma}}$} (B);
  \draw [->] (C) to node {$f_{\Sigma}$} (D);
  \draw [->] (C) to node {BFlux} (E);
  \draw [->] (E) to node {$\cong$} (F);
  \draw [->] (D) to node {$h$} (F);
 \end{tikzpicture}
   \end{gathered}
 \end{equation}
The Flux map is $\Symp_{\omega}(\Sigma,\partial)-$equivariant (see \cite[Lemma 6]{kotschick2005signatures}). Given the appropriate point-set model for the section space $ \text{\textnormal{Sect}}_0(\Sigma,\partial)$ as the bundle maps, the maps $f_{\Sigma}$ and $h$ are also $\Symp_{\omega}(\Sigma,\partial)-$equivariant. In the claim below, we shall prove that there is a point-set model for  $\widetilde{\text{\textnormal{Sect}}_0(\Sigma,\partial)}$ given by certain bundle maps. Thus, using the same constructions as in \cite[Section 5.1]{nariman2014homologicalstability} or \cite[Section 1.2.2]{nariman2015stable}, one obtains a $\Symp_{\omega}(\Sigma,\partial)-$equivariant model for the map  $\tilde{f_{\Sigma}}$. Hence, we have 
\[
\BdeHam_{\omega}(\Sigma,\partial)\xleftarrow{\simeq} \BdHam_{\omega}(\Sigma,\partial)\hcoker \Symp_{\omega}(\Sigma,\partial)\to \widetilde{\text{\textnormal{Sect}}_0(\Sigma,\partial)}\hcoker \Symp_{\omega}(\Sigma,\partial),
\]
where the second arrow induces only a homology isomorphism. Hence to prove the theorem, the idea is to show that the space $\widetilde{\text{\textnormal{Sect}}_0(\Sigma,\partial)}$ is in fact the space of bundle maps of a certain tangential structure over the surface $\Sigma$.

Recall from the diagram \ref{eq0} in the introduction that $\widetilde{\BH}$ is the homotopy fiber of the map $e+v: \BH\to K(\bR,2)$ and there is a tangential structure $\beta: \widetilde{\BH}\to \mathrm{BSL}_2(\bR)$.

\noindent{\bf Claim:} There is a map 
\[
\widetilde{\text{\textnormal{Sect}}_0(\Sigma,\partial)}\to \text{Bun}_{\partial}(T\Sigma,{\beta}^*\gamma),
\]
which is a weak homotopy equivalence.

\noindent{\it Proof of the claim:} Let $ \text{Bun}_{\partial,0}(T\Sigma,{\theta}^*\gamma)$ denote the base point component of $ \text{Bun}_{\partial}(T\Sigma,{\theta}^*\gamma)$. We write $\text{Map}_{\partial}(\Sigma,K(\bR,2))$ to denote the continuous mappings that send the germs of the boundary to the base point of $K(\bR,2)$ and the base point of the space of maps is the constant map whose value is the base point of $K(\bR,2)$.  Let $\text{Map}_{\partial,0}(\Sigma,K(\bR,2))$ denote its base point component. For brevity, we denote $H^1(\Sigma,\partial;\bR)$ by $H^1_{\bR}$ which is also the fundamental group of the mapping space $\text{Map}_{\partial,0}(\Sigma,K(\bR,2))$. 

Recall that Thom's theorem (\cite{MR0089408})  says the space of maps from a topological space $X$ to the Eilenberg MacLane space $K(G,m)$ is homotopically equivalent to the products of Eilenberg-MacLane spaces $\prod_{i=0}^m K(H^{m-i}(X;G),i)$. Now for the mapping space $\text{Map}_{\partial,0}(\Sigma,K(\bR,2))$, since we are considering the base point component, we are omitting the factor  $K(H^2(\Sigma,\partial;\bR),0)$  in the Thom theorem. And since we are considering the subspace of maps that send the boundary to the base point of $K(\bR,2)$, we are omitting the factor $K(H^0(\Sigma,\partial;\bR),2)$ in the splitting in Thom's theorem. Therefore, the natural map from the mapping space to the classifying space of its fundamental group
\[
\text{Map}_{\partial,0}(\Sigma,K(\bR,2))\xrightarrow{\simeq} \mathrm{B}{H^1_{\bR}}^{\delta},
\]
 is a homotopy equivalence. Note that the fibration sequence
\[
\widetilde{\BH}\to \BH\xrightarrow{e+v} K(\bR,2),
\]
induces the following fibration
\begin{equation}\label{fib1}
\text{Bun}_{\partial}(T\Sigma,{\beta}^*\gamma)\to \text{Bun}_{\partial,0}(T\Sigma,{\theta}^*\gamma)\to \text{Map}_{\partial,0}(\Sigma,K(\bR,2)).
\end{equation}
For a surface with boundary, the above sequence is a fibration of mappings spaces, but we write bundle maps to keep track of the action of the group $\Symp_{\omega}(\Sigma,\partial)$ on spaces in the above fibration. Recall from \Cref{tang} that the map $ \text{\textnormal{Sect}}_0(\Sigma,\partial)\to \text{Bun}_{\partial,0}(T\Sigma,{\theta}^*\gamma)$ is a weak equivalence and that was how in the first place we defined the action of $\Symp_{\omega}(\Sigma,\partial)$ on the section spaces. Hence, to prove the claim, it is enough to show the following diagram is homotopy commutative
\begin{equation}\label{X}
  \begin{gathered}
 \begin{tikzpicture}[node distance=2cm, auto]
  \node (C) {$\BdSymp_{0,\omega}(\Sigma,\partial)$};
  \node (D) [below of=C]{$ \text{\textnormal{Sect}}_0(\Sigma,\partial)$};
  \node (E) [right of=C, node distance=4.2cm]{$\mathrm{B}{H^1_{\bR}}^{\delta}$};
  \node (F) [below of=E]{$\text{Map}_{\partial,0}(\Sigma,K(\bR,2)).$};
  \draw [->] (C) to node {$f_{\Sigma}$} (D);
  \draw [->] (C) to node {BFlux} (E);
  \draw [->] (F) to node {$\simeq$} (E);
  \draw [->] (D) to node {$-\circ (e+v)$} (F);
 \end{tikzpicture}
   \end{gathered}
 \end{equation}
To do so, let us recall how the map $f_{\Sigma}$ is defined. Consider the Borel construction $\Sigma\hcoker \dSymp_{0,\omega}(\Sigma,\partial)$ as a foliated surface bundle with a transverse volume form over $\BdSymp_0(\Sigma,\partial)$. Note that topologically this surface bundle is the trivial bundle $\BdSymp_{0,\omega}(\Sigma,\partial)\times \Sigma$, because topologically it is classified by a map to $\BSymp_{0,\omega}(\Sigma,\partial)$ which is contractible. Hence, by the general theory of foliations, we have a homotopy commutative diagram
\begin{equation}\label{eq3}
\begin{gathered}
 \begin{tikzpicture}[node distance=2cm, auto]
  \node (A) {$\BdSymp_{0,\omega}(\Sigma,\partial)\times \Sigma$};
  \node (B) [node distance=3.4cm, right of=A] {$\mathrm{B}\Gamma_{2}^{\text{vol}}$};
  \node (C) [below of=A, node distance=1.5cm] {$\Sigma$};  
  \node (D) [below of=B, node distance=1.5cm] {$\mathrm{BSL}_2(\bR).$};
  \node (E) [right of=B, node distance=3.cm] {$K(\bR,2)$};
  \draw[->] (C) to node {$\tau$} (D);
  \draw [->] (A) to node {$F$} (B);
    \draw [->] (A) to node {$\text{proj}$} (C);
  \draw [->] (B) to node {$\theta$} (D);
  \draw [->] (B) to node {$-\circ (e+v)$} (E);
\end{tikzpicture}
\end{gathered}
\end{equation}
Since the space $\text{Map}_{\partial,0}(\Sigma,K(\bR,2))\simeq \mathrm{B}{H^1_{\bR}}^{\delta}$ is an 
Eilenberg-MacLane space, to prove that the diagram \ref{X} is homotopy commutative, we need to show that the two maps $\text{BFlux}$ and $f_{\Sigma}\circ(-\circ (e+v))$ represent the same 
cohomology class in $H^1(\BdSymp_{0,\omega}(\Sigma,\partial); H^1_{\bR})$. 

Using the Kunneth theorem, $H^2(\BdSymp_{0,\omega}(\Sigma,\partial)\times \Sigma;\bR)$ is isomorphic to 
\begin{equation}\label{Kunneth}
H^2(\BdSymp_{0,\omega}(\Sigma,\partial);\bR)\oplus H^1_{\bR}\otimes H^1(\BdSymp_{0,\omega}(\Sigma,\partial);\bR)\oplus H^2(\Sigma,\partial; \bR).
\end{equation}
Note that the class represented by $f_{\Sigma}\circ(-\circ (e+v))$ is the same class obtained by projecting the class $$F^*(e+v)\in H^2(\BdSymp_{0,\omega}(\Sigma,\partial)\times \Sigma;\bR),$$ to the summand $H^1_{\bR}\otimes H^1(\BdSymp_{0,\omega}(\Sigma,\partial);\bR)$ in the decomposition (\ref{Kunneth}). Since the volume form is normalized, the restriction of $F^*(e+v)$ to each fiber is zero, therefore the projection of $F^*(e+v)$ to  $H^2(\Sigma,\partial;\bR)$ is zero. Given that the foliation on $\BdSymp_{0,\omega}(\Sigma,\partial)\times \Sigma$ is trivial near the boundary of the fibers, the map $F$ is constant near the boundary of the fibers, hence the projection of $F^*(e+v)$ to $H^2(\BdSymp_{0,\omega}(\Sigma,\partial);\bR)$ is also zero (this fact can be observed geometrically as the combination of \cite[Proposition 8]{kotschick2004characteristic} and \cite[Corollary 15]{kotschick2004characteristic}). Finally, by the calculation in \cite[Proposition 8]{kotschick2004characteristic} and \cite[Lemma 8]{kotschick2005signatures}, the projection of $F^*(e+v)$ to $H^1_{\bR}\otimes H^1(\BdSymp_{0,\omega}(\Sigma,\partial);\bR)$ is indeed the Flux. This finishes the proof of the claim. $ \blacksquare$

Now we can use the homological stability theorem for moduli space of tangential structures (\cite[Theorem 7.1]{randal2009resolutions}) to finish the proof of the theorem.  The moduli space of $\beta-$structures $\mathcal{M}^{\beta}(\Sigma,\partial)$ is defined to be $\text{Bun}_{\partial}(T\Sigma,{\beta}^*\gamma)\hcoker \Symp_{\omega}(\Sigma,\partial)$. Given that $\BdeHam_{\omega}(\Sigma,\partial)$ is homology equivalent to  $\mathcal{M}^{\beta}(\Sigma,\partial)$, the moduli space $\mathcal{M}^{\beta}(\Sigma,\partial)$ is connected. Therefore we have stability on $\pi_0(\mathcal{M}^{\beta}(\Sigma,\partial))$, hence similar to the argument in the proof of \Cref{thm1}, Randal-Williams' theorem applies and we deduce that the groups $\deHam_{\omega}(\Sigma,\partial)$ have the same stability range as the mapping class group. \end{proof}

\subsection{Closing the last boundary component}\label{closedsurface} Let $\iota: (D^2,\omega|_{D^2})\hookrightarrow (\Sigma,\omega)$ be a volume preserving embedding of a disk into a closed surface. This embedding induces group homomorphism
\[
s(\iota): \Symp_{\omega}(\Sigma,\text{rel }D^2)\to \Symp_{\omega}(\Sigma),
\]
where $ \Diff_{\omega}(\Sigma,\text{rel }D^2)$ denotes those volume preserving diffeomorphisms that are the identity in a neighborhood of the embedded disk. We shall prove below that $s(\iota)$ induces homology isomorphism in a range. 
\begin{rem}\label{rem1}
Similarly the volume preserving embedding induce a homomorphism 
\[
h(\iota): \deHam_{\omega}(\Sigma,\text{rel }D^2)\to \deHam_{\omega}(\Sigma).
\]
But the map $h(\iota)$ fails to induce an isomorphism even in the low homological degrees. To see why, consider the fibration
\[
1\to \dHam_{\omega}(\Sigma)\to \deHam_{\omega}(\Sigma)\to \text{MCG}(\Sigma)\to 1.
\]
The Serre spectral sequence implies that there is a short exact sequence
\[
 H^1(\dHam_{\omega}(\Sigma);\bQ)^{\text{MCG}(\Sigma)}\to H^2(\text{MCG}(\Sigma);\bQ)\to H^2(\deHam_{\omega}(\Sigma);\bQ).
\]
Now $\dHam_{\omega}(\Sigma)$ is a perfect group by an unpublished  result of Thurston (see \cite{MR1445290}). But for $g(\Sigma)\geq 3$ the group $H^2(\text{MCG}(\Sigma);\bZ)$ is generated by the first MMM-class $\kappa_1$ (see \cite{harer1983second}). Thus the class $\kappa_1$ is also nonzero in $H^2(\deHam_{\omega'}(\Sigma');\bQ)$. But $\Ham(\Sigma,\text{rel }D^2)$ is not perfect and Bowden observed in \cite[Theorem 7.2]{MR2826937} that for this reason $\kappa_1$ in $H^2(\deHam_{\omega}(\Sigma,\text{rel }D^2);\bQ)$ has to vanish.
\end{rem}

To prove that $s(\iota)$ induces a homology isomorphism in the same range as \Cref{thm1}, we use a modification of the disk resolution technique (\cite[Section 11.2]{randal2009resolutions}).
\begin{defn}
Let $[p]$ denote the set $\{0,1,...,p\}$. Let $ \text{\textnormal{Emb}}_{\omega}(\coprod_{[p]} D^2, \Sigma)$ denote the space of smooth volume preserving embeddings of union of $p$ disjoint  closed $2$-disks with the standard volume form into the surface $\Sigma$. We say two volume preserving  embeddings $g_1$ and $g_2$ in $ \text{\textnormal{Emb}}(\coprod_{[p]} D^2, \Sigma)$ have the same germ if there exists an open neighborhood $U\subset D^2$ around the origin so that $g_1|_{\coprod_{[p]} U}= g_2|_{\coprod_{[p]} U}$. Let 
\[E_p(\Sigma):= \text{\textnormal{Emb}}^{g, \delta}_{\omega}(\coprod_{[p]} D^2, \Sigma).
\]
be the space of germs of volume preserving embeddings with the discrete topology
\end{defn}
Note that $E_{\bullet}(\Sigma)$ is a semisimplicial set whose face maps are given by forgetting the disks (see \cite[Section 2]{randal2009resolutions} for preliminaries on semisimplicial spaces). Using isotopy extension theorem for volume preserving diffeomorphisms (see \cite[Theorem 2]{krygin1971continuation}), one can see that the group $\dDiff_{\omega}(\Sigma)$ in fact acts transitively on $E_p(\Sigma)$. 

Let us fix $e_p\in E_p(\Sigma)$ for each $p$ so that it has a representative whose  image  does not intersect our fixed embedded disk $\iota: D^2\hookrightarrow \Sigma$. We use the same notation for this representative of the germ of embeddings $e_p$. Let $\Sigma(p)$ denote the punctured surface obtained by removing the centers of the disks in $e_p$. Let $\Sigma\backslash e_p$ denote the surface obtained by removing the interior of the image of $e_p$ from $\Sigma$. 

The action of $\dSymp_{\omega}(\Sigma)$ on $e_p$ defines a map 
\begin{equation}\label{eq6}
\pi: \dSymp_{\omega}(\Sigma)\to E_p(\Sigma).
\end{equation}
 The fiber of the map $\pi$ over $e_p$ is $\dSymp_{c,\omega}(\Sigma(p))$ which is compactly supported volume preserving diffeomorphisms of $\Sigma(p)$. 
\begin{defn}\label{defn}The disk resolution for $\BdSymp_{\omega}(\Sigma)$ is defined to be the augmented semisimplicial space 

\[ \epsilon: X_{\bullet}(\Sigma):= E_{\bullet}(\Sigma)\hcoker \dSymp_{\omega}(\Sigma)\to \BdSymp_{\omega}(\Sigma),\]
whose face maps are induced by that of $E_{\bullet}(\Sigma)$.
\end{defn}
Consider the map 
\[
|\epsilon|: |X_{\bullet}(\Sigma)|\xrightarrow{}\BdSymp_{\omega}(\Sigma),
\]
induced by $\epsilon$. By \cite[Lemma 2.1]{randal2009resolutions}, the homotopy fiber of this map is the realization $|E_{\bullet}(\Sigma)|$. But as we show below $|E_{\bullet}(\Sigma)|$ is contractible. Therefore $|\epsilon|$ is a weak homotopy equivalence. 
\begin{lem}\label{claim}
The realization $|E_{\bullet}(\Sigma)|$ is weakly contractible.
\end{lem}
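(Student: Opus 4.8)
The plan is to prove weak contractibility of $|E_\bullet(\Sigma)|$ by the standard ``adjoin a disjoint germ'' argument; this works precisely because we have passed to germs and given them the discrete topology. First I would record the combinatorial shape of $E_\bullet(\Sigma)$. Since we are working with germs, a $p$-simplex of $E_\bullet(\Sigma)$ is the same datum as an ordered $(p+1)$-tuple of distinct points $x_0,\dots,x_p\in\Sigma$ together with, at each $x_i$, the germ $\phi_i$ of a volume preserving embedding sending $0$ to $x_i$, the face map $d_i$ forgetting the $i$-th point and germ. Two such germs supported at distinct points automatically have disjoint representatives, so no area or ``room'' constraint enters the definition. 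In particular $E_p(\Sigma)\neq\emptyset$ for all $p\ge 0$: by the local normal form for area forms there is such a germ at every point of $\Sigma$, and one may pick $p+1$ of them at distinct points.

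Because $E_\bullet(\Sigma)$ is a semisimplicial set, $|E_\bullet(\Sigma)|$ is a CW complex whose cells are indexed by the simplices, so any compact subset --- in particular the image of a map $S^n\to|E_\bullet(\Sigma)|$ --- is contained in a finite semisimplicial subset $Z_\bullet\subseteq E_\bullet(\Sigma)$. Such a $Z_\bullet$ involves only finitely many germs, supported at a finite set of points $\{x_1,\dots,x_N\}\subset\Sigma$. Picking any $y\in\Sigma\setminus\{x_1,\dots,x_N\}$ and a germ $v\in E_0(\Sigma)$ of a volume preserving embedding at $y$, the germ $v$ is disjoint from every germ occurring in $Z_\bullet$. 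Hence for each simplex $\sigma=(\phi_0,\dots,\phi_p)$ of $Z_\bullet$ the tuple $v\ast\sigma:=(v,\phi_0,\dots,\phi_p)$ lies in $E_{p+1}(\Sigma)$, with $d_0(v\ast\sigma)=\sigma$ and $d_j(v\ast\sigma)=v\ast d_{j-1}\sigma$ for $1\le j\le p+1$. Consequently $Z_\bullet\cup\{v\ast\sigma:\sigma\in Z_\bullet\}\cup\{v\}$ is again a finite semisimplicial subset of $E_\bullet(\Sigma)$, and it has $v$ as a cone point; its realization is the cone on $|Z_\bullet|$, hence contractible. Therefore the inclusion $|Z_\bullet|\hookrightarrow|E_\bullet(\Sigma)|$ is null-homotopic.

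It then follows that for every $n\ge 0$ any map $S^n\to|E_\bullet(\Sigma)|$ factors through some $|Z_\bullet|$ as above and so is null-homotopic; applying this with $n=0$ also shows $|E_\bullet(\Sigma)|$ is nonempty and path-connected. Hence $|E_\bullet(\Sigma)|$ is weakly contractible. The only genuine point is the reduction in the first paragraph: after germification the simplices are governed by finitely many points of $\Sigma$, so a germ based at a fresh point is automatically compatible with any prescribed finite configuration. (With honest pairwise disjoint embedded disks of fixed area this compatibility --- and with it the contractibility --- would break down once the disks fill up $\Sigma$; passing to germs is exactly what circumvents this.) Everything after the first paragraph is the usual cone argument, and I expect no difficulty there.
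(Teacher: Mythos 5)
Your argument is correct and is essentially the paper's proof: both rest on compactness of $S^k$ to reduce to finitely many simplices, then pick a germ at a fresh point of $\Sigma$ disjoint from the finitely many support points, and use it to cone off the finite subcomplex. The paper packages the second step as a semisimplicial null-homotopy of the inclusion $E_\bullet(\Sigma\setminus e(D^2))\hookrightarrow E_\bullet(\Sigma)$ rather than writing out the cone subcomplex explicitly, but the mechanism is identical, including the key observation you highlight that germs at distinct points admit disjoint representatives so no area constraint can obstruct the construction.
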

\begin{proof}
Let $ f: S^k\to |E_{\bullet}(\Sigma)|$ represents an element in the $k$-th homotopy group of $|E_{\bullet}(\Sigma)|$. Since $|E_{\bullet}(\Sigma)|$ is a CW-complex and $S^k$ is compact, the map $f$ hits finitely many simplices of $|E_{\bullet}(\Sigma)|$. Hence, there exists a point   ${\bf p}$ in $\Sigma$ and an embedded disk $e(D^2)$ around it such that as an element of $E_{0}(\Sigma)$ is disjoint from the centers of the germs of embedded disks in the image of $f$. Therefore, we have $f(S^k)\subset |E_{\bullet}(\Sigma\backslash e(D^2))|$. Adding the germ of $e$ at ${\bf p}$ to the list of germs of embeddings of disks in $\Sigma\backslash e(D^2)$ gives a semisimplicial null-homotopy for the inclusion $E_{\bullet}(\Sigma\backslash e(D^2))\hookrightarrow E_{\bullet}(\Sigma)$. Hence, $f(S^k)$ can be coned off inside $|E_{\bullet}(\Sigma)|$.
\end{proof}
Given that $|\epsilon|$ induces a weak homotopy equivalence, the spectral sequence associated to the skeleton filtration of the realization $|X_{\bullet}(\Sigma)|$ takes the form
\[
E^1_{p,q}(\Sigma)=H_{q}(X_p(\Sigma);\bZ)\Longrightarrow H_{p+q}(\BdSymp_{\omega}(\Sigma);\bZ).
\]

\begin{proof}[Proof of \Cref{cappingoffsymp}] We can similarly define a disk resolution $X_{\bullet}(\Sigma,\text{rel }D^2)$ for the open surface $\Sigma\backslash D^2$. The stabilization map induce a semisimplicial map between augmented semisimplicial spaces
\[
X_{\bullet}(\Sigma,\text{rel }D^2)\to X_{\bullet}(\Sigma).
\]
Hence we obtain a comparison map between the associated spectral sequences
\[
\begin{tikzcd}
H_q(X_{p}(\Sigma,\text{rel }D^2))\arrow{r}\arrow[Rightarrow]{d}&H_q(X_{p}(\Sigma))\arrow[Rightarrow]{d}\\H_{p+q}(|X_{\bullet}(\Sigma,\text{rel }D^2)|)\arrow{r}\arrow["\cong"]{d}& H_{p+q}(|X_{\bullet}(\Sigma)|)\arrow["\cong"]{d}\\ H_{p+q}(\BdSymp_{\omega}(\Sigma,\text{rel }D^2))\arrow["\iota_*"]{r}& H_{p+q}(\BdSymp_{\omega}(\Sigma)).
\end{tikzcd}
\]
The action in \Cref{defn}, yields a sequence of fibrations
\[
\dSymp_{c,\omega}(\Sigma(p))\to \dSymp_{\omega}(\Sigma)\to E_p(\Sigma)\to X_p(\Sigma)\to \BdSymp_{\omega}(\Sigma).
\]
 Now by  Shapiro's lemma (which says that for a subgroup $H<G$, the homotopy quotient $(G/H)\hcoker G$ is weakly equivalent to $\mathrm{B}H$), we have 
\[
X_p(\Sigma)\simeq \BdSymp_{c,\omega}(\Sigma(p)).
\]
\[
X_p(\Sigma, \text{rel }D^2)\simeq \BdSymp_{c,\omega}(\Sigma(p)\backslash D^2).
\]
Now note that similar to \cite[Corollary 2.3]{nariman2014homologicalstability}, the inclusion $\dSymp_{\omega}(\Sigma\backslash e_p,\partial)\hookrightarrow\dSymp_{c,\omega}(\Sigma(p))$ induces a homology isomorphism  where $\Sigma\backslash e_p$ has $p+1$ boundary components. Therefore, in the commutative diagram
\[
 \begin{tikzpicture}[node distance=6.3cm, auto]
  \node (A) {$E^1_{p,q}(\Sigma,\text{rel }D^2)=H_q(\BdSymp_{c,\omega}(\Sigma(p)\backslash D^2))$};
  \node (B) [right of=A]{$H_q(\BdSymp_{c,\omega}(\Sigma(p)))=E^1_{p,q}(\Sigma)$};
  \node (C) [below of=A, node distance=2.2cm]{$H_q(\BdSymp_{\omega}(\Sigma\backslash e_p\cup D^2,\partial))$};
      \node (D)[right of=C]  {$H_q(\BdSymp_{\omega}(\Sigma\backslash e_p,\partial))
,$};
  \draw [->] (A) to node {$$} (B);
  \draw [->] (B) to node {$\cong$} (D);
  \draw [-> ] (A) to node {$\cong$} (C);
    \draw [<-] (D) to node {$$} (C);
 \end{tikzpicture}
 \]
the bottom map is an isomorphism for $q\leq (2g(\Sigma)-2)/3$  by \Cref{thm1} for surfaces with boundary. Therefore the top horizontal map between $E^1$-pages is an isomorphism in the same range which implies that the map
$$H_{p+q}(\BdSymp_{\omega}(\Sigma,\text{rel }D^2))\to H_{p+q}(\BdSymp_{\omega}(\Sigma))$$
is an isomorphism in the same range.
\end{proof}
\subsection{The stable homology} From now on, for brevity, we shall drop the symplectic form $\omega$ from the subscripts. As we saw in \Cref{rem1}, the extended Hamiltonian groups do not exhibit homological stability when we cap off the last boundary component. Nonetheless one can describe the group homology of $\deHam(\Sigma)$ for a closed surface $\Sigma$ as in \Cref{thm3'}. Before proving \Cref{thm3'} which is the main goal of this section, let us recall  from \Cref{tang} that $\BdSymp(\Sigma,\partial)$ is homology isomorphic to the base point component of $\mathcal{M}^{\theta}(\Sigma,\partial)$ and similarly we showed that $\BdeHam(\Sigma,\partial)$ is homology isomorphic to $\mathcal{M}^{\beta}(\Sigma,\partial)$. 

Therefore similar to \cite[Theorem 2.2]{nariman2015stable},  \Cref{thm3} is implied by the homological stability for $\mathcal{M}^{\theta}(\Sigma,\partial)$ and $\mathcal{M}^{\beta}(\Sigma,\partial)$, and the main theorem of Galatius-Madsen-Tillmann-Weiss in \cite{galatius2009homotopy}. Hence by a standard argument, one obtains maps arising from the Pontryagin-Thom construction
\[
\BdSymp(\Sigma,\partial)\to \Omega_{\bullet}^{\infty}\text{MT}\theta,
\]
\[
\BdeHam(\Sigma,\partial)\to \Omega_{\bullet}^{\infty}\text{MT}\beta,
\]
that induce  isomorphisms on homology in degrees less than or equal to $(2g(\Sigma)-2)/3$ and surjections in degrees less than $(2g(\Sigma)+1)/3$. If the surface $\Sigma$ is closed, the stable homology of $\BdSymp(\Sigma)$  also coincides with that of $\Omega_{\bullet}^{\infty}\text{MT}\theta$, but  the situation is different for $\BdeHam(\Sigma)$. One should note that the moduli space of $\beta$-structures exhibit homological stability even when one caps off the last boundary component. But as we shall explain below the map 
\[
\BdeHam(\Sigma)\to \mathcal{M}^{\beta}(\Sigma),
\]
is no longer a homology isomorphism even in the stable range.  We need to mod out $\mathcal{M}^{\beta}(\Sigma)$ by a certain subgroup of the homotopy automorphism group of the tangential structure $\beta$. Let us first recall the definition of the homotopy automorphism group of a map.
\begin{defn}
Let $\pi: E\to B$ be a fibration. The topological monoid $\text{hAut}(\pi)$ is the space of maps $f:E\to E$ which are weak homotopy equivalences and satisfy $\pi\circ f=f$. The monoid structure is induced by the composition.
\end{defn} 

We are interested in $\text{hAut}(\beta)$ for the tangential structure $\beta:\widetilde{\BH}\to \mathrm{BSL}_2(\bR)$. Note that $\text{hAut}(\beta)$ acts on $\mathcal{M}^{\beta}(\Sigma)$ and on the spectrum $\text{MT}\beta$. To prove \Cref{thm3'}, we first describe an action of the topological abelian group $\mathrm{B}\bR^{\delta}$ on the spectrum $\text{MT}\beta$ by realizing it as a submonoid of $\text{hAut}(\beta)$. 

Let $E(\mathrm{B}\bR^{\delta})$ denote the universal $\mathrm{B}\bR^{\delta}$-principal bundle over $K(\bR,2)\simeq \mathrm{B}(\mathrm{B}\bR^{\delta})$. Consider the model for $\widetilde{\BH}$ obtained by the  homotopy pullback diagram
\begin{equation}\label{Z}
  \begin{gathered}
 \begin{tikzpicture}[node distance=1.5cm, auto]
  \node (C) {$\widetilde{\BH}$};
  \node (D) [below of=C, node distance= 1.5cm]{$\BH$};
  \node (E) [right of=C, node distance=4.2cm]{$\mathrm{BSL}_2(\bR)\times E(\mathrm{B}\bR^{\delta}) $};
    \node (G) [right of=E, node distance=3.2cm]{$\mathrm{BSL}_2(\bR)$};
  \node (F) [below of=E]{$\mathrm{BSL}_2(\bR)\times \mathrm{B}(\mathrm{B}\bR^{\delta}),$};
  \draw [->] (C) to node {$$} (D);
  \draw [->] (C) to node {$$} (E);
  \draw [<-] (F) to node {$$} (E);
    \draw [->] (E) to node {$\simeq$} (G);
  \draw [->] (D) to node {$(\theta,-\circ (e+v))$} (F);
 \end{tikzpicture}
   \end{gathered}
 \end{equation}
where the composition of the top horizontal maps is $\beta$. Using this model $\widetilde{\BH}$ admits  an action of $\mathrm{B}\bR^{\delta}$ as the principal $\mathrm{B}\bR^{\delta}-$bundle over  $\BH$. Hence, from the diagram \ref{Z}, we obtain that this $\mathrm{B}\bR^{\delta}$-action fixes the map $\beta$. Therefore, $\mathrm{B}\bR^{\delta}$ is a submonoid of $\text{hAut}(\beta)$. So it also acts on the Thom spectrum $\text{MT}\beta$. We want to prove that for a closed surface $\Sigma$, there is a map 
\[
\BdeHam(\Sigma)\to  \mathrm{B}\bR^{\delta}\hker\Omega^{\infty}\text{MT}\beta,
\]
that induces homology isomorphism in the stable range onto the connected component that it hits.
\begin{proof}[Proof of \Cref{thm3'}] Recall from (\ref{fib1}) that we have the fibration sequence
\[
\text{Bun}(T\Sigma,{\beta}^*\gamma)\xrightarrow{g} \text{Bun}_{0}(T\Sigma,{\theta}^*\gamma)\xrightarrow{-\circ (e+v)} \text{Map}_{0}(\Sigma,K(\bR,2)).
\]
Since the group $\mathrm{B}\bR^{\delta}$ is a subgroup of $\text{hAut}(\beta)$, it also acts on $\text{Bun}(T\Sigma,{\beta}^*\gamma)$. Given the model for $\widetilde{\BH}$ in the diagram \ref{Z}, the map $g$ factors through the homotopy quotient of this action
 \[
 \begin{tikzpicture}[node distance=2.3cm, auto]
  \node (A) {$\text{Bun}(T\Sigma,{\beta}^*\gamma)$};
  \node (B) [right of=A, below of=A, node distance=1.6cm]{$ \mathrm{B}\bR^{\delta}\hker\text{Bun}(T\Sigma,{\beta}^*\gamma)$};
  \node (C) [right of=A, node distance=4.3cm]{$ \text{Bun}_{0}(T\Sigma,{\theta}^*\gamma).$};
  \draw [->] (A) to node {$g$} (C);
  \draw [->] (B) to node {$$} (C);
  \draw [->] (A) to node {} (B);
 \end{tikzpicture}
\]

On the other hand, unlike the case of surfaces with nonempty boundary, the map from the mapping space $\text{Map}_{0}(\Sigma,K(\bR,2))$ to $\mathrm{B}{H^1_{\bR}}^{\delta}$ is no longer a weak equivalence. In fact, we have the fibration sequence
\[
K(\bR,2)\to \text{Map}_{0}(\Sigma,K(\bR,2))\xrightarrow{p} \mathrm{B}{H^1_{\bR}}^{\delta}.
\]
Let $X$ denote the homotopy fiber of the map $p\circ (-\circ (e+v))$, then $X$ fits into the following diagram

\begin{equation}\label{S}
  \begin{gathered}
 \begin{tikzpicture}[node distance=2cm, auto]
  \node (A) {$ \text{Bun}(T\Sigma,{\beta}^*\gamma)$};
  \node (B) [right of=A, node distance=3cm]{$X$};
  \node (C) [right of=B, node distance=4.2cm]{$\mathrm{B}(\mathrm{B}\bR^{\delta})$};
  \node (D) [below of=A, node distance=1.5cm]{$  \text{Bun}(T\Sigma,{\beta}^*\gamma)$};
  \node (E) [right of=D, node distance=3cm]{$ \text{Bun}_{0}(T\Sigma,{\theta}^*\gamma)$};
  \node (F) [right of=E, node distance=4.2cm]{$ \text{Map}_{0}(\Sigma,K(\bR,2))$};
    \node (G) [below of=D, node distance=1.5cm]{$ *$};
  \node (H) [right of=G, node distance=3cm]{$ \mathrm{B}{H^1_{\bR}}^{\delta}$};
  \node (I) [right of=H, node distance=4.2cm]{$\mathrm{B}{H^1_{\bR}}^{\delta},$};
  \draw [->] (A) to node {$$} (B);
  \draw [->] (B) to node {$$} (C);
    \draw [->] (C) to node {$$} (F);
    \draw [->] (B) to node {$$} (E);
  \draw [->] (A) to node {$\cong$} (D);
  \draw [->] (D) to node {$$} (E);
  \draw [->] (E) to node {$-\circ (e+v)$} (F);
    \draw [->] (E) to node {} (H);
  \draw [->] (D) to node {$$} (G);
  \draw [->] (G) to node {$$} (H);
    \draw [->] (H) to node {$\cong$} (I);
        \draw [->] (F) to node {$p$} (I);
 \end{tikzpicture}
   \end{gathered}
 \end{equation}
where every horizontal and vertical line is a fibration. Recall that for a group $G$ and a topological space $Y$, the group $G$ acts on a model for the homotopy fiber of a map $f: Y\to  \mathrm{B}G$.  And the total space $Y$ is homotopy equivalent to the homotopy quotient $G\hker\text{hofib}(f)$.
Given that $\mathrm{B}\bR^{\delta}$ acts on the space $\text{Bun}(T\Sigma,{\beta}^*\gamma)$ via homotopy automorphisms,  from the top horizontal fibration, we deduce that $X\simeq\mathrm{B}\bR^{\delta}\hker\text{Bun}(T\Sigma,{\beta}^*\gamma)$.

McDuff's theorem \ref{mcduff1} for a closed surface $\Sigma$ implies that the map  $\tilde{f_{\Sigma}}$ in the diagram 
\begin{equation}\label{W}
  \begin{gathered}
 \begin{tikzpicture}[node distance=2cm, auto]
  \node (A) {$\BdHam(\Sigma)$};
  \node (B) [below of=A]{$X$};
  \node (C) [right of=A, node distance=3cm]{$\BdSymp_0(\Sigma)$};
  \node (D) [right of=B, node distance=3cm]{$  \text{Bun}_{0}(T\Sigma,{\theta}^*\gamma)$};
  \node (E) [right of=C, node distance=4.7cm]{$\mathrm{B}{H^1_{\bR}}^{\delta}$};
  \node (F) [below of=E]{$\mathrm{B}{H^1_{\bR}}^{\delta},$};
  \draw [->] (A) to node {$$} (C);
  \draw [->] (B) to node {$$} (D);
  \draw [->] (A) to node {$\tilde{f_{\Sigma}}$} (B);
  \draw [->] (C) to node {$f_{\Sigma}$} (D);
  \draw [->] (C) to node {BFlux} (E);
  \draw [->] (E) to node {$\cong$} (F);
  \draw [->] (D) to node {$p\circ (-\circ (e+v))$} (F);
 \end{tikzpicture}
   \end{gathered}
 \end{equation}
 induces a homology isomorphism. As we discussed in the proof of \Cref{thm2}, the group $\Symp(\Sigma)$ acts on a model for $\BdHam(\Sigma)$. Also recall that $\Symp(\Sigma)$ acts on $\text{Bun}(T\Sigma,{\beta}^*\gamma)$ by acting on the tangent bundle $T\Sigma$ and  the topological group $\mathrm{B}\bR^{\delta}$ acts on  $\text{Bun}(T\Sigma,{\beta}^*\gamma)$  via homotopy automorphisms of $\beta$. Therefore, the action of $\mathrm{B}\bR^{\delta}$ and the action of $\Symp(\Sigma)$ on $\text{Bun}(T\Sigma,{\beta}^*\gamma)$ commute which implies that $\Symp(\Sigma)$ acts on $\mathrm{B}\bR^{\delta}\hker\text{Bun}(T\Sigma,{\beta}^*\gamma)$ as  a model for $X$ still by acting on $T\Sigma$.  Hence, the construction for $\tilde{f}_{\Sigma}$ as in \cite[Section 5.1]{nariman2014homologicalstability} makes it $\Symp(\Sigma)-$equivariant. So we have a map
 \[
 \mathrm{B}\deHam(\Sigma)\simeq \BdHam(\Sigma)\hcoker \Symp(\Sigma)\longrightarrow \mathrm{B}\bR^{\delta}\hker\text{Bun}(T\Sigma,{\beta}^*\gamma)\hcoker \Symp(\Sigma),
 \]
that induces a homology isomorphism. Now from \cite{galatius2009homotopy}, we know that the stable homology of $\mathcal{M}^{\beta}(\Sigma)=\text{Bun}(T\Sigma,{\beta}^*\gamma)\hcoker \Symp(\Sigma)$ coincides with that of a connected component of $\Omega^{\infty}\text{MT}\beta$. Hence, we obtain a map  
\[
\BdeHam(\Sigma)\to \mathrm{B}\bR^{\delta}\hker \Omega^{\infty}\text{MT}\beta, 
\]
that induces a homology isomorphism in the stable range onto the connected component that it hits.
 \end{proof}
 \begin{proof}[Proof of \Cref{cor1}] Recall that we want to show that for every prime $p$, the map induced by capping off the last boundary component
\[
H_*(\mathrm{B}\deHam(\Sigma,\partial);\bF_p)\to H_*(\mathrm{B}\deHam(\Sigma);\bF_p),
\]
is an isomorphism on homology  in the stable range.  It is enough to show the map 
\[
\Omega^{\infty}\text{MT}\beta\to \mathrm{B}\bR^{\delta}\hker \Omega^{\infty}\text{MT}\beta,
\]
induces homology isomorphism with $\bF_p$-coefficients. Using a $\bQ$-basis for $\bR$ and the Kunneth formula, one can show that 
\begin{equation}\label{eq7}\begin{gathered}
H_k(K(\bR,2);\bZ)=
\begin{cases}
\bZ & k=0\\ S _{\bQ}^r(\bR) & k=2r\\
0 & \text{otherwise}
\end{cases}
\end{gathered}
\end{equation}
where $S _{\bQ}^r(\bR)$ is the $r$-th symmetric power of $\bR$ as a $\bQ$-vector space. Since $S _{\bQ}^r(\bR)$ is a uniquely divisible abelian group, the universal coefficient theorem implies that $K(\bR,2)$ has the $\bF_p$-homology of the point. Therefore, the Serre spectral sequence for the fibration 
\[
\Omega^{\infty}\text{MT}\beta\to \mathrm{B}\bR^{\delta}\hker \Omega^{\infty}\text{MT}\beta\to K(\bR,2),
\]
degenerates and we obtain the desired isomorphism 
\[
H_*(\Omega^{\infty}\text{MT}\beta;\bF_p)\xrightarrow{\cong}H_*(\mathrm{B}\bR^{\delta}\hker \Omega^{\infty}\text{MT}\beta;\bF_p).
\]
 \end{proof}
 The above proof shows that although capping off the last boundary component for $\deHam(\Sigma)$ does not exhibit homological stability with rational coefficients, it does with finite coefficients. Thus for closed surfaces   $\Sigma$ and $\Sigma'$, one can use \Cref{cor1} to find a zig-zag of isomorphisms between $H_*(\mathrm{B}\deHam(\Sigma);\bF_p)$ and $H_*(\mathrm{B}\deHam(\Sigma');\bF_p)$ in the stable range,  even if there is no direct map between $\deHam(\Sigma)$ and $\deHam(\Sigma')$. 
 
Moreover, we show below that the rational group homology of $\deHam(\Sigma)$ and $\deHam(\Sigma')$ are isomorphic in the stable range via a different zig-zag of maps.
 \begin{proof}[Proof of \Cref{cor2'}] First note that although different connected components of $\Omega^{\infty}\text{MT}\beta$ have the same homotopy type, there is no reason for different connected components of $\mathrm{B}\bR^{\delta}\hker \Omega^{\infty}\text{MT}\beta$ to be homotopy equivalent. Recall that the group of connected components of $\Omega^{\infty}\text{MT}\beta$ maps to the index $2$ subgroup of $H_2(\widetilde{\BH};\bZ)$ as follows
 \[
 H_0(\Omega^{\infty}\text{MT}\beta;\bZ)= \pi_0(\text{MT}\beta;\bZ)\rightarrow H_2(\widetilde{\BH};\bZ)\xrightarrow{\cong}\bZ,
 \]
where the last isomorphism is given by the Euler class of the tangential structure $\beta:  \widetilde{\BH}\to \mathrm{BSL}_2(\bR)$. Therefore  the group of connected components of $\Omega^{\infty}\text{MT}\beta$ is isomorphic to $\bZ$ by half of the Euler class. Let $\Omega^{\infty}_n\text{MT}\beta$ denote the connected component corresponding to $n$ in the above isomorphism. Therefore, for a surface $F$, similar to \cite[Section 2.4]{MR1856399} we have a map
 \[
 \BdeHam(F,\partial)\to \Omega^{\infty}\text{MT}\beta,
 \]
that hits the $\frac{\chi(F)}{2}$-th connected component.  Given that the action of $\mathrm{B}\bR^{\delta}$ preserves the connected components, by \Cref{thm3'} we have maps
 \[
\BdeHam(\Sigma) \to \mathrm{B}\bR^{\delta}\hker \Omega^{\infty}_{\chi(\Sigma)/2}\text{MT}\beta,
 \]
 \[
\BdeHam(\Sigma') \to \mathrm{B}\bR^{\delta}\hker \Omega^{\infty}_{\chi(\Sigma')/2}\text{MT}\beta,
 \] 
 that induce homology isomorphisms in the stable range. 
 
 \noindent{\it Claim: }For every $n$ and $k$, there exists a map
 \[
\phi_k: \Omega^{\infty}_{n}\text{MT}\beta\to \Omega^{\infty}_{kn}\text{MT}\beta,
 \]
 that commutes with the action of $ \mathrm{B}\bR^{\delta}$ and induces an isomorphism on homology with rational coefficients. 
 
 To construct the map $\phi_k$, write the infinite loop space $\Omega^{\infty}\text{MT}\beta$ as a loop space $\Omega Y$. Recall that $\pi_0(\Omega Y)=\bZ$. By traversing each loop $k$ times, one obtains a map
 \[
 \phi_k: \Omega Y\to \Omega Y,
 \]
 that induces multiplication by $k$ on $\pi_0(\Omega Y)$. This map obviously commutes with the action of $\mathrm{B}\bR^{\delta}$ and is invertible after rationalization.  Therefore it induces an isomorphism  on homology with rational coefficients.

 Let us explain how to use the claim to finish the proof. We assume that neither $\chi(\Sigma)$ nor $\chi(\Sigma')$ is zero. Consider the following diagram of spaces
\begin{equation} \begin{gathered}
 \begin{tikzpicture}[node distance=2.cm, auto]
  \node (C) {$ \Omega^{\infty}_{\chi(\Sigma)/2}\text{MT}\beta$};
  \node (D) [below of=C, node distance= 2.cm]{$\Omega^{\infty}_{\chi(\Sigma)\chi(\Sigma')/4}\text{MT}\beta$};
  \node (E) [right of=C, node distance=4.2cm]{$ \Omega^{\infty}_{\chi(\Sigma')/2}\text{MT}\beta $};
  \node (F) [below of=E]{$ \Omega^{\infty}_{\chi(\Sigma)\chi(\Sigma')/4}\text{MT}\beta.$};
  \draw [->] (C) to node {$\phi_{\chi(\Sigma')/2}$} (D);
  \draw [<-] (F) to node {$\phi_{\chi(\Sigma')/2}$} (E);
  \draw [->] (D) to node {$=$} (F);
 \end{tikzpicture}
   \end{gathered}
 \end{equation}
 The vertical maps are isomorphisms on rational homology.  Hence after taking homotopy quotient by $\mathrm{B}\bR^{\delta}$, we obtain the desired zig-zag of maps that induce isomorphisms on rational homology. 
  \end{proof}
\section{Characteristic classes of symplectic flat surface bundles}\label{sec3} In this section, we show how one can use \Cref{thm3} to prove non-triviality or vanishing of characteristic classes for flat surface bundles whose holonomy groups are area preserving. We call such surface bundles symplectic flat surface bundles. For a surface $\Sigma$, the invariants of a flat $\Sigma$-bundle with a transverse volume form live in $H^*(\BdSymp(\Sigma);\bZ)$. If the holonomy has vanishing extended flux, then the invariants come from classes in $H^*(\BdeHam(\Sigma);\bZ)$. To construct characteristic classes, we consider  the universal symplectic flat surface bundle 
\[
\begin{tikzpicture}[node distance=1.4cm, auto]
  \node (C) {$ \Sigma$};
  \node (E) [right of=C, node distance=2cm]{$ \Sigma\hcoker \dSymp(\Sigma) $};
  \node (F) [below of=E]{$ \BdSymp(\Sigma).$};
  \draw [->] (C) to node {$$} (E);
  \draw [<-] (F) to node {$\pi$} (E);
 \end{tikzpicture}
\]
 Here are certain characteristic classes that one can define for such symplectic flat surface bundle:
\begin{itemize}
\item {\bf MMM-classes.} Let $T_{\pi}$ be the vertical tangent bundle which is $2$-plane bundle on the total space, tangent to the fibers. Let $e(T_{\pi})$ be the Euler class of this bundle. The MMM-classes are defined to be
\[
\kappa_i=\pi_!(e(T_{\pi})^{i+1})\in H^{2i}(\BdSymp(\Sigma);\bZ).
\]
In other words, one can forget that the bundle is foliated and just consider its invariants as a surface bundle. Such classes  come from the cohomology of the mapping class group of $\Sigma$. 
\item {\bf Characteristic classes of foliations.} There are certain characteristic classes associated to foliations with transverse volume forms that in our case live in $H^*(\BH;\bR)$ (see \cite{MR712266} and \cite{MR0312531} for different constructions of such classes). A symplectic flat surface bundle, in particular provides a codimension $2$ foliation with a transverse volume form on the total space. The pushforward of such classes live in $H^*(\BdSymp(\Sigma);\bR)$. 
\item {\bf Kotschick-Morita classes.} Kotschick and Morita used the extended flux as a twisted class to build interesting invariants of symplectic flat surface bundles (see \cite{kotschick2004characteristic} for details). Their classes live in the cohomology group $H^*(\BdSymp(\Sigma); S^k_{\bQ}(S^2\bR))$. 
\end{itemize}
One of the consequence of  \Cref{thm3}, as we shall see below, is in fact Kotschick-Morita's classes are induced from characteristic classes of foliations. 

For every $n$, let us denote the following composition by $e_n$
\begin{equation}\label{eq9}\begin{gathered}
e_n: H_n(\BdSymp(\Sigma);\bZ)\to H_n(\Omega^{\infty}\text{MT}\theta;\bZ)\to H_{n+2}(\BH;\bZ),
\end{gathered}\end{equation}
where the first map is induced by a Pontryagin-Thom construction (see \cite[Section 2.2]{nariman2015stable} for a description of such a map) and the second map is given by the Thom isomorphism. For homology with rational coefficients, one can geometrically describe this map as follows. Recall that from a theorem of Thom, every class in $c\in H_n(\BdSymp(\Sigma);\bQ)$ can be represented by $\Sigma\to E_c\to M_c$ which is a symplectic flat $\Sigma$-bundle over an $n$-manifold $M_c$. By definition, this bundle gives rise to a codimension $2$ foliation on $E_c$ with a transverse volume form. One  can easily check that the map that associates the class $[E_c]\in H_{n+2}(\BH,\bQ)$ to the class $c$ gives a well-defined map
\[
H_n(\BdSymp(\Sigma);\bQ)\to H_{n+2}(\BH,\bQ).
\]
Let $\Sigma$ be a surface with boundary, we can define a similar map for $\BdSymp(\Sigma,\partial)$ and $\BdeHam(\Sigma,\partial)$ and in the case of extended Hamiltonians, we obtain a map
\[
h_n: H_n(\BdeHam(\Sigma,\partial);\bZ)\to H_{n+2}(\widetilde{\BH},\bZ).
\]
\begin{prop}\label{prop1}
For a surface $\Sigma$ with boundary, the maps $e_n$ and $h_n$ are rationally surjective for $0<n\leq 2g(\Sigma)/3$.
\end{prop}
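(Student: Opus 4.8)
The plan is to deduce the proposition from two essentially independent ingredients: the homological stability results of the paper, which in a range identify $H_n(\BdSymp(\Sigma,\partial);\bQ)$ and $H_n(\BdeHam(\Sigma,\partial);\bQ)$ with the homology of infinite loop spaces, and a purely formal statement about the rational homology of infinite loop spaces. Recall that $e_n$ and $h_n$ are, by construction, the composites of (i) the map on $H_n$ induced by the Pontryagin--Thom maps $\BdSymp(\Sigma,\partial)\to\Omega^{\infty}\text{MT}\theta$ and $\BdeHam(\Sigma,\partial)\to\Omega^{\infty}\text{MT}\beta$, (ii) the stabilization map $\sigma\colon H_n(\Omega^{\infty}E)\to H_n(E)$ from the homology of an infinite loop space to the homology of the underlying spectrum, and (iii) the Thom isomorphisms $H_n(\text{MT}\theta)\cong H_{n+2}(\BH)$ and $H_n(\text{MT}\beta)\cong H_{n+2}(\widetilde{\BH})$ coming from the orientation of the tautological bundle $\gamma$. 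So I would split the task accordingly.

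For step (i): by \Cref{thm1}, \Cref{thm2} and the Galatius--Madsen--Tillmann--Weiss theorem \cite{galatius2009homotopy}, as packaged in \Cref{thm3}, the two Pontryagin--Thom maps are homology isomorphisms in degrees $\leq (2g(\Sigma)-2)/3$ and homology epimorphisms in degrees $< (2g(\Sigma)+1)/3$; since $2g(\Sigma)/3<(2g(\Sigma)+1)/3$, they are surjective on $H_n(-;\bQ)$ for every $0<n\leq 2g(\Sigma)/3$. For step (iii) there is nothing to prove: $\gamma$ is oriented (being the $2$-plane bundle over $\mathrm{BSL}_2(\bR)$), so the Thom isomorphisms hold even integrally. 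The heart of the argument is therefore step (ii): $\sigma$ is rationally surjective in all positive degrees. This is classical; I would argue it by rationalizing $\Omega^{\infty}E$ to a product $\prod_{k}K(\pi_k(E)\otimes\bQ,\,k)$ of Eilenberg--MacLane spaces, so that $H_*(\Omega^{\infty}E;\bQ)$ is the free graded-commutative $\bQ$-algebra on $\bigoplus_{k>0}\pi_k(E)\otimes\bQ=H_{>0}(E;\bQ)$, and under this identification $\sigma$ is the projection onto the degree-$k$ generators (equivalently, onto the primitives, via Milnor--Moore). Hence $\sigma$ is surjective in each positive degree; applying this to $E=\text{MT}\theta$ and $E=\text{MT}\beta$ and postcomposing with the Thom isomorphisms shows that $H_n(\Omega^{\infty}\text{MT}\theta;\bQ)\to H_{n+2}(\BH;\bQ)$ and $H_n(\Omega^{\infty}\text{MT}\beta;\bQ)\to H_{n+2}(\widetilde{\BH};\bQ)$ are surjective for all $n>0$.

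Combining these, for $0<n\leq 2g(\Sigma)/3$ the maps $e_n\otimes\bQ$ and $h_n\otimes\bQ$ are composites of a surjection with a surjection, hence surjective, which is the assertion. I expect the only points needing care --- rather than genuine obstacles --- to be the bookkeeping of the stability ranges (confirming the epimorphism range of \Cref{thm3} really contains $n\leq 2g(\Sigma)/3$, which the inequality above handles) and the verification that the second arrow in the definitions of $e_n$ and $h_n$ is the honest stabilization-followed-by-Thom map rather than a twisted variant. This last point I would read off from the geometric description preceding the proposition: a flat $\Sigma$-bundle with transverse volume form over an $n$-manifold $M_c$ produces an oriented $(n+2)$-manifold $E_c$ mapping to $\BH$ (resp. $\widetilde{\BH}$), and $e_n[M_c]=[E_c]$ is the image of $[M_c]$ under the fibrewise Pontryagin--Thom collapse, which is exactly $\sigma$ composed with the Thom isomorphism. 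In particular the proof uses no information about the (largely unknown) homotopy type of $\BH$; rational surjectivity of $\sigma$ is automatic, which is precisely why the conclusion is only a surjectivity statement and only with rational coefficients.
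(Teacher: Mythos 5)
Your proposal is correct and follows essentially the same route as the paper: use \Cref{thm3} to reduce to the rational surjectivity of the composite $H_n(\Omega^{\infty}_{\bullet}\mathrm{MT}\theta;\bQ)\to H_n(\mathrm{MT}\theta;\bQ)\cong H_{n+2}(\BH;\bQ)$ (and likewise for $\beta$), and then observe that the suspension map of an infinite loop space is rationally surjective in positive degrees. The only cosmetic difference is in how that last fact is justified --- the paper chases a commutative square of Hurewicz maps using the rational Hurewicz theorem for spectra, while you invoke the rational splitting of $\Omega^{\infty}_{\bullet}E$ into Eilenberg--MacLane spaces and Milnor--Moore --- but these are the same standard fact.
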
 
\begin{proof}
We prove surjectivity for $e_n$ and the proof for $h_n$ is similar. From \Cref{thm3}, we know that  for $n\leq 2g(\Sigma)/3$, there is a surjective map
\[
\begin{tikzpicture}[node distance=4cm]
  \node (C) {$H_n(\BdSymp(\Sigma,\partial);\bQ)$};
  \node (D) [right of=C]{$H_n(\Omega^{\infty}_{\bullet}\text{MT}\theta;\bQ),$};
\draw[->>] (C) to node {} (D);
\end{tikzpicture}
\]
where $\Omega^{\infty}_{\bullet}\text{MT}\theta$ means the base point component of $\Omega^{\infty}\text{MT}\theta$. The map induced by the suspension map $\Omega^{\infty}\text{MT}\theta\to \text{MT}\theta$ followed by Thom isomorphism gives the map
\[
H_n(\Omega^{\infty}_{\bullet}\text{MT}\theta;\bQ)\to H_n(\text{MT}\theta;\bQ)\xrightarrow{\cong}H_{n+2}(\BH;\bQ).
\]
Hence, it is enough to prove the above map is surjective. Consider the commutative diagram
\[
\begin{tikzcd}
\pi_n(\Omega^{\infty}_{\bullet}\text{MT}\theta)\otimes \bQ\arrow{r}\arrow{d}&\pi_n(\text{MT}\theta)\otimes\bQ\arrow{d}\\ H_n(\Omega^{\infty}_{\bullet}\text{MT}\theta;\bQ)\arrow[two heads]{r}&H_n(\text{MT}\theta;\bQ),
\end{tikzcd}
\] 
where the horizontal maps are induced by the suspension map and the vertical maps are induced by the Hurewicz map.  The top horizontal map is an isomorphism by the definition of the homotopy groups of a spectra  and the right vertical map is also an isomorphism because of the rational Hurewicz theorem. Therefore, the bottom horizontal map, is surjective.
\end{proof}
Hence,  nontrivial classes in $H^*(\BH;\bQ)$ and $H^*(\widetilde{\BH};\bQ)$ give rise to  nontrivial classes in $H^*(\BdSymp(\Sigma,\partial);\bQ)$ and $H^*(\BdeHam(\Sigma,\partial);\bQ)$ respectively. We investigate these two cases separately.
\subsection{Characteristic classes of flat surface bundles whose holonomy groups lie in extended Hamiltonian} Recall from the introduction that we have a short exact sequence
\begin{equation}\label{eq4} \begin{gathered}
1\to \Ham(\Sigma,\partial)\to \widetilde{\Ham}(\Sigma,\partial)\to \text{MCG}(\Sigma,\partial)\to 1.
\end{gathered}\end{equation}
There is a surjective homomorphism called Calabi homomorphism $$ \text{Cal}: \dHam(\Sigma,\partial)\to \bR.$$ Banyaga \cite{MR490874} proved that the kernel of this homomorphism is perfect. Therefore, we have $
H_1(\dHam(\Sigma,\partial);\bZ)\cong\bR.$

As Bowden observed $\text{Cal}$ lives in $H^1(\dHam(\Sigma,\partial);\bR)^{\text{MCG}(\Sigma,\partial)}$. He in \cite[Theorem 7.2]{MR2826937} proved that in the cohomology Hochschild-Serre spectral sequence for the short exact sequence \ref{eq4}, the differential 
\[
E_2^{0,1}=H^1(\dHam(\Sigma,\partial);\bR)^{\text{MCG}(\Sigma,\partial)}\xrightarrow{d^2} E_2^{2,0}=H^2(\text{MCG}(\Sigma,\partial);\bR)\cong \bR,
\]
is nontrivial by showing that $d^2(\text{Cal})$ is nonzero. Hence, dually in the homology Hochschild-Serre spectral sequence with rational coefficients, we obtain a map
\[
E_{2,0}^2=H_2(\text{MCG}(\Sigma,\partial);\bQ)\cong \bQ\xrightarrow{d_2} E^2_{0,1}=H_1(\dHam(\Sigma,\partial);\bQ)\cong \bR,
\]
that is injective. Since the mapping class group $\text{MCG}(\Sigma,\partial)$ is perfect (\cite{powell1978two} proves that the first homology of the mapping class group of a closed surface of genus larger than two is trivial and the Harer homological stability \cite{harer1985stability} implies that the first homology is stable as we cap off the boundary components if the genus is larger than two) for $g(\Sigma)\geq 3$, from the homology Hochschild-Serre spectral sequence, we deduce $$H_1(\BdeHam(\Sigma,\partial);\bQ)\cong \bR/\bQ$$ if the genus is larger than $2$.
\begin{prop}\label{prop1}
For $k\leq 2g(\Sigma)/3$ and $g\geq 3$, there is a surjective map
\[
\begin{tikzcd}
H_k(\BdeHam(\Sigma,\partial);\bQ)\arrow[two heads]{r} & \bigwedge\nolimits^k_{\bQ}(\bR/\bQ).
\end{tikzcd}
\]
\end{prop}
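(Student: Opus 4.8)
The plan is to transport the statement into the stable infinite loop space $\Omega^{\infty}_{\bullet}\mathrm{MT}\beta$, whose rational homology is a free graded-commutative algebra, and then to project onto the exterior algebra generated by its degree-one part.

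First I would invoke \Cref{thm3} together with the homological stability of \Cref{thm2}: in the range $k\leq 2g(\Sigma)/3$ the Pontryagin--Thom map
\[
\BdeHam(\Sigma,\partial)\longrightarrow \Omega^{\infty}_{\bullet}\mathrm{MT}\beta
\]
induces an epimorphism on $H_k(-;\bQ)$, and an isomorphism once $k\leq (2g(\Sigma)-2)/3$. Since $g(\Sigma)\geq 3$ the degree $k=1$ lies in the isomorphism range, so the computation $H_1(\BdeHam(\Sigma,\partial);\bQ)\cong\bR/\bQ$ recorded just above the proposition transfers to $H_1(\Omega^{\infty}_{\bullet}\mathrm{MT}\beta;\bQ)\cong\bR/\bQ$, equivalently $\pi_1(\mathrm{MT}\beta)\otimes\bQ\cong\bR/\bQ$.

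Next I would use that $\Omega^{\infty}\mathrm{MT}\beta$ is an infinite loop space: after rationalizing, its base component splits as $\prod_{n\geq 1}K\!\left(\pi_n(\mathrm{MT}\beta)\otimes\bQ,\,n\right)$, so that $H_*(\Omega^{\infty}_{\bullet}\mathrm{MT}\beta;\bQ)$ is the free graded-commutative $\bQ$-algebra on $V=\bigoplus_{n\geq 1}\pi_n(\mathrm{MT}\beta)\otimes\bQ$, with the $n$-th summand in degree $n$. In particular its total-degree-one part is precisely $V_1=\pi_1(\mathrm{MT}\beta)\otimes\bQ\cong\bR/\bQ$, which sits in odd degree; hence the subalgebra it generates is the exterior algebra $\bigwedge^{*}_{\bQ}(\bR/\bQ)$, and as a graded $\bQ$-vector space $H_*(\Omega^{\infty}_{\bullet}\mathrm{MT}\beta;\bQ)\cong \bigwedge^{*}_{\bQ}(\bR/\bQ)\otimes_{\bQ}\mathrm{Free}(V_{\geq 2})$. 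Thus $\bigwedge^{k}_{\bQ}(\bR/\bQ)$ is a direct summand of $H_k(\Omega^{\infty}_{\bullet}\mathrm{MT}\beta;\bQ)$, and precomposing the projection onto this summand with the Pontryagin--Thom epimorphism above (valid for $k\leq 2g(\Sigma)/3$) yields the desired surjection $H_k(\BdeHam(\Sigma,\partial);\bQ)\twoheadrightarrow\bigwedge^{k}_{\bQ}(\bR/\bQ)$.

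The step I expect to require the most care is the free graded-commutative description of $H_*(\Omega^{\infty}_{\bullet}\mathrm{MT}\beta;\bQ)$, since $\pi_1(\mathrm{MT}\beta)\otimes\bQ\cong\bR/\bQ$ is an infinite-dimensional $\bQ$-vector space while the Hopf-algebra structure theorem is usually stated for spaces of finite type. This is resolved by writing $\bR/\bQ$ as the filtered colimit of its finite-dimensional $\bQ$-subspaces and noting that rationalization, $\Omega^{\infty}$, and $H_*(-;\bQ)$ all commute with filtered colimits, so the standard computation of $H_*(K(A,n);\bQ)$ for finite-dimensional $A$ passes to the limit. Beyond that, the only point to flag is that only the epimorphism half of the stable-range comparison is used; the remainder of the argument is formal.
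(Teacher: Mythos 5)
Your argument is correct and follows essentially the same route as the paper's: reduce to a surjection onto $H_k(\Omega^{\infty}_{\bullet}\mathrm{MT}\beta;\bQ)$ via the homological stability of \Cref{thm2} and the stable comparison of \Cref{thm3}, identify $H_1\cong \bR/\bQ$, and then project onto $\bigwedge^k_{\bQ}(\bR/\bQ)$ using the rational Hopf-algebra structure of infinite-loop-space homology. The paper packages the last step by citing primitivity of the degree-one class (from which a short iterated-coproduct argument already produces the surjection), whereas you pass through the rational Eilenberg--MacLane splitting and freeness of the homology ring; these rest on the same structure theorem, and your remark on handling the infinite $\bQ$-dimensionality of $\bR/\bQ$ by filtered colimits is a useful point the paper leaves implicit.
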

\begin{proof}
Again by \Cref{thm2}, we know that in the same range, there is a surjective map
\[
\begin{tikzcd}
H_k(\BdeHam(\Sigma,\partial);\bQ)\arrow[two heads]{r} & H_k(\Omega^{\infty}_{\bullet}\text{MT}\beta;\bQ),
\end{tikzcd}
\]
where $\Omega^{\infty}_{\bullet}\text{MT}\beta$ denotes the base point component of $\Omega^{\infty}\text{MT}\beta$. On the other hand, $H_*(\Omega^{\infty}_{\bullet}\text{MT}\beta;\bQ)$ is a Hopf algebra over $\bQ$ and since $H_1(\Omega^{\infty}_{\bullet}\text{MT}\beta;\bQ)\cong \bR/\bQ$ consists of primitive elements, we have a surjective map
\[
\begin{tikzcd}
H_k(\Omega^{\infty}_{\bullet}\text{MT}\beta;\bQ)\arrow[two heads]{r} & \bigwedge\nolimits^k_{\bQ}(\bR/\bQ),
\end{tikzcd}
\]
where $\bigwedge\nolimits^k_{\bQ}(\bR/\bQ)$ is the $k$-th exterior power of $\bR/\bQ$ as a vector space over $\bQ$.\end{proof}
\begin{rem}
For a closed surface, the situation is different because Banyaga's theorem in this case implies that $\dHam(\Sigma)$ is perfect. Therefore for $g(\Sigma)\geq 3$, the group $\deHam(\Sigma)$ is also perfect.
\end{rem}
\begin{proof}[Proof of \Cref{thm4}] We want to show that in the stable range, all the MMM-classes $\kappa_i\in H^{2i}(\BdeHam(\Sigma,\partial);\bR)$ vanish. By \Cref{thm2}, it is enough to show that $\kappa_i\in H^{2i}(\Omega^{\infty}_{\bullet}\text{MT}\beta;\bR)$ vanishes. Let us first recall how the class $\kappa_i$ is defined as a class in $H^{2i}(\Omega^{\infty}_{\bullet}\text{MT}\beta)$. The tangential structure $\beta$ is a map
\[
\beta: \widetilde{\BH}\to \mathrm{BSL}_2(\bR).
\]
Let $e\in H^2(\mathrm{BSL}_2(\bR);\bR)$ be the Euler class. The class $\kappa_i$ is given by the composition of the following maps
\[
H_{2i}(\Omega^{\infty}_{\bullet}\text{MT}\beta;\bR)\xrightarrow{\sigma_*}H_{2i}(\text{MT}\beta;\bR)\xrightarrow{\text{Thom iso}}H_{2i+2}(\widetilde{\BH};\bR)\xrightarrow{\beta^*(e^{i+1})} \bR,
\]
where the first map is induced by the suspension map. Hence, to prove the theorem it is enough to show that $\beta^*(e^{i+1})$ vanishes in $H^{2i+2}(\widetilde{\BH};\bR)$ for $i>0$. Recall we have commutative diagram of tangential structures
\[
\begin{tikzpicture}[node distance=1.5cm, auto]
  \node (C) {$\widetilde{\BH}$};
  \node (D) [right of=C]{$\BH$};
    \node (E) [below of=D]{$\mathrm{BSL}_2(\bR).$};
      \draw[->] (C) to node {$\alpha$} (D);
      \draw[<-] (E) to node {$\beta$} (C);
      \draw[->] (D) to node {$\theta$} (E);
\end{tikzpicture}
\]
With abuse of notation, we already denoted the pullback of the Euler class $\theta^*(e)\in H^2(\BH;\bR)$ by $e$. Since, $\widetilde{\BH}$ is the homotopy fiber of the map $$\BH\xrightarrow{e+v} K(\bR,2),$$ the class $\beta^*(e)$ is equal to $-\alpha^*(v)$ in $H^2(\widetilde{\BH};\bR)$. Note that since $v$ is the universal transverse volume form, we have $v^2=0$ in $H^2(\BH;\bR)$. Therefore $\beta^*(e^2)=0$ in $H^{4}(\widetilde{\BH};\bR)$ which concludes the proof.
\end{proof}
\begin{rem}
For a closed surface $\Sigma$, Bowden in his thesis (\cite{bowden2010two}) observed that the Bott vanishing theorem for foliations with transverse volume form which in this case says $e^2v=0\in H^6(\BH;\bR)$, implies $\kappa_i$ for $i>1$ and $\kappa_1^2$ vanish in $H^*(\BdeHam(\Sigma);\bR)$. It is an immediate consequence of the perfectness of $\dHam(\Sigma)$  that $\kappa_1$ in fact is nonzero in $H^2(\BdeHam(\Sigma);\bR)$.
\end{rem}
\subsection{Non-vanishing results for classes  in $H^*(\BdSymp(\Sigma,\partial);\bZ)$} Let us first recall what we know about MMM-classes for symplectic flat surface bundles.  Morita observed  in \cite{morita1987characteristic} that the Bott vanishing theorem implies that $\kappa_i$ for $i>2$ vanishes in $H^{2i}(\BdDiff(\Sigma,\partial);\bQ)$. Hence it also vanishes in $H^{2i}(\BdSymp(\Sigma,\partial);\bQ)$. Kotschick and Morita in \cite{kotschick2005signatures} however proved that $\kappa_1\in H^{2}(\BdSymp(\Sigma,\partial);\bQ)$ is nonzero. One can also use \Cref{thm3} to prove their result about $\kappa_1$ (see \Cref{cor2} below). 

With integer coefficients, however, the author proved in \cite[Corollary 2.6]{nariman2015stable} that all the MMM-classes in the stable range are nonzero in $H^*(\BdDiff(\Sigma,\partial);\bZ)$. Exactly the same idea works to show that stable MMM-classes are also nonzero in $H^*(\BdSymp(\Sigma,\partial);\bZ)$. Here, we give a sketch of the argument and refer the reader to \cite[Theorem 2.4]{nariman2015stable} for further details.
\begin{proof}[Proof sketch of \Cref{thm5}] It is enough to prove that the quotient map 
\[
\iota: \dSymp(\Sigma,\partial)\to \text{MCG}(\Sigma,\partial),
\]
induces an injective map on cohomology with integer coefficients
\[
\iota^*: H^*(\mathrm{B} \text{MCG}(\Sigma,\partial);\bZ)\hookrightarrow H^*(\BdSymp(\Sigma,\partial);\bZ),
\]
in the stable range. Since the homology of the mapping class group is finitely generated in the stable range by the Madsen-Weiss theorem, Corollary 1 in \cite{milnor1983homology} implies that the injection of $\iota^*$ is equivalent to showing that for every prime $p$, the map
\[
H^*(\mathrm{B} \text{MCG}(\Sigma,\partial);\bF_p)\hookrightarrow H^*(\BdSymp(\Sigma,\partial);\bF_p),
\]
is injective in the stable range. Therefore, by the Madsen-Weiss theorem and \Cref{thm3}, it is enough to show the map
\[
H^*(\Omega^{\infty}_{\bullet}\text{MTSO}(2);\bF_p)\to H^*(\Omega^{\infty}_{\bullet}\text{MT}\theta;\bF_p),
\]
is injective, where $\text{MTSO}(2)$ is the Madsen-Tillmann spectrum (\cite{MR1856399}). To recall a definition of this spectrum, let $\gamma$ denote the tautological $2$-plane bundle over $\mathrm{BSL}_2(\bR)$. The Madsen-Tillmann spectrum can be described as the Thom spectrum of $-\gamma$.  

Note that the rotation matrices is a subgroup of the group of endomorphisms of the origin in the groupoid $\Gamma_2^{\text{vol}}$. Since endomorphisms of each object in $\Gamma_2^{\text{vol}}$ is a discrete group, we obtain the following maps
\[
\mathrm{B} {S^1}^{\delta}\xrightarrow{\eta} \BH\xrightarrow{\theta} \mathrm{BSL}_2(\bR).
\]
Using \cite[Lemma 3]{milnor1983homology}, one can see that $\theta\circ \eta$ induces homology isomorphisms with $\bF_p$-coefficients. The map $\theta\circ \eta$ induces a tangential structure and let $\text{MT}(\theta\circ \eta)$ denote the Thom spectrum of $(\theta\circ \eta)^*(-\gamma)$. Thus the map $\theta\circ \eta$ induces a  spectrum map from  $\text{MT}(\theta\circ \eta)$ to $\text{MTSO}(2)$. Given that  $\theta\circ \eta$ induces $\bF_p$-cohomology isomorphisms, the composition map 
\[
 H^*(\Omega^{\infty}_{\bullet}\text{MTSO}(2);\bF_p)
\to H^*(\Omega^{\infty}_{\bullet}\text{MT}\theta;\bF_p)\to  H^*(\Omega^{\infty}_{\bullet}\text{MT}(\theta\circ \eta);\bF_p),\]
is an isomorphism. Hence, the map
\[
H^*(\Omega^{\infty}_{\bullet}\text{MTSO}(2);\bF_p)\to H^*(\Omega^{\infty}_{\bullet}\text{MT}\theta;\bF_p),
\]
is injective.
\end{proof}
To study cohomology  classes in $H^*(\BdSymp(\Sigma,\partial);\bZ)$ other than MMM-classes, we need to find nontrivial classes in $H^*(\BH;\bZ)$ other than powers of the Euler class. Unfortunately, we still do not know if any of the exotic classes (e.g. Godbillon-Vey classes or Gelfand-Kalinin-Fuks classes \cite{MR0312531}) for foliations with transverse volume form are nontrivial. Hurder in \cite{MR712266} proved that for such foliations with the codimension larger than $2$ some of the exotic classes are nontrivial. Nonetheless, as we shall see below, the fact that the volume form $v\in H^2(\BH;\bR)$ is nontrivial provides us with a plethora of nontrivial invariants for the symplectic flat surface bundles. 

 Recall that the maps $e_n$ in \ref{eq9}  are defined integrally. We  write $e^p_2$ for the map induced on homology with $\bZ_{(p)}$-coefficients i.e. integers localized at the prime $p$. The statement of \Cref{thm2'} is implied by the first part of the following theorem.
\begin{thm}
(a) For $p>3$, the map $e^p_2$ is an isomorphism if $g(\Sigma)\geq 4$ and epimorphism if $g(\Sigma)\geq 3$. 
 (b) The map $e_3$ is an isomorphism with rational coefficients if $g(\Sigma)\geq 6$. 
\end{thm}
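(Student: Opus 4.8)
The map $e_n$ of \eqref{eq9} factors as
\[
H_n(\BdSymp(\Sigma,\partial);R)\xrightarrow{\ (\mathrm{i})\ }H_n(\Omega^{\infty}_{\bullet}\text{MT}\theta;R)\xrightarrow{\ \sigma_*\ }H_n(\text{MT}\theta;R)\xrightarrow{\ \text{Thom}\ }H_{n+2}(\BH;R),
\]
where $(\mathrm{i})$ is the Pontryagin--Thom map of \Cref{thm3}, $\sigma_*$ is the stabilization map, and the last arrow is the Thom isomorphism. By \Cref{thm3} the map $(\mathrm{i})$ is an isomorphism for $n\leq (2g(\Sigma)-2)/3$ and an epimorphism for $n\leq 2g(\Sigma)/3$, with any coefficients; since $n=2$ lies in the isomorphism range precisely when $g\geq 4$ and in the epimorphism range precisely when $g\geq 3$, and $n=3$ lies in the isomorphism range precisely when $g\geq 6$, the whole theorem reduces to showing that $\sigma_*$ is an isomorphism with $\bZ_{(p)}$-coefficients ($p>3$) in degree $2$ and with $\bQ$-coefficients in degree $3$.

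First I would record $H_*(\BH;\bZ)$ in low degrees. The homotopy fibre $\overline{\BH}$ of $\theta\colon\BH\to\mathrm{BSL}_2(\bR)\simeq\mathrm{BSO}(2)=\mathbb{CP}^{\infty}$ is simply connected, and by McDuff's theorem (the $3$-connectivity of $\bar v\colon\overline{\BH}\to K(\bR,2)$ invoked in the proof of \Cref{thm1}) one has $H_i(\overline{\BH};\bZ)\cong H_i(K(\bR,2);\bZ)$ for $i\leq 3$, i.e. $\bZ,0,\bR,0$ in degrees $0,1,2,3$. Feeding this into the Serre spectral sequence of $\overline{\BH}\to\BH\to\mathbb{CP}^{\infty}$ (whose base is simply connected with torsion-free homology, so no local-coefficient or $\mathrm{Tor}$ terms intrude) yields $H_*(\BH;\bZ)=\bZ,0,\bZ\oplus\bR,0$ in degrees $0,1,2,3$. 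By the Thom isomorphism $H_*(\text{MT}\theta;\bZ)=\bZ,0,\bZ\oplus\bR,0$ in degrees $-2,-1,0,1$; in particular $H_1(\text{MT}\theta;\bZ)=0$, hence also $H_1(\text{MT}\theta;\bQ)=0$.

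Next I would analyse $\pi_1(\text{MT}\theta)$ through the Atiyah--Hirzebruch spectral sequence $E^2_{s,t}=H_s(\text{MT}\theta;\pi^s_t)\Rightarrow\pi_{s+t}(\text{MT}\theta)$, which converges since $\text{MT}\theta$ is bounded below. In total degree $1$ with $t\geq 0$ and $s\geq -2$ the only nonzero $E^2$-terms are $E^2_{-2,3}=\bZ\otimes\pi^s_3\cong\bZ/24$ and $E^2_{0,1}=(\bZ\oplus\bR)\otimes\pi^s_1\cong\bZ/2$, using that $\bR$ is a $\bQ$-vector space so $\bR\otimes\bZ/n=0$. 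Hence $\pi_1(\text{MT}\theta)$ is a finite group supported at the primes $2$ and $3$, so $\pi_1(\text{MT}\theta)\otimes\bZ_{(p)}=0$ for $p>3$ and $\pi_1(\text{MT}\theta)\otimes\bQ=0$. For $p>3$ this makes $Y:=\Omega^{\infty}_{\bullet}\text{MT}\theta$ $\bZ_{(p)}$-simply connected: the fibration $\widetilde{Y}\to Y\to\mathrm{B}\pi_1(\text{MT}\theta)$ over the $\bZ_{(p)}$-acyclic space $\mathrm{B}\pi_1(\text{MT}\theta)$ gives $H_2(Y;\bZ_{(p)})\cong H_2(\widetilde{Y};\bZ_{(p)})\cong\pi_2(\text{MT}\theta)\otimes\bZ_{(p)}$ by Hurewicz on the universal cover, and the same AHSS yields $\pi_2(\text{MT}\theta)\otimes\bZ_{(p)}\xrightarrow{\ \cong\ }H_2(\text{MT}\theta;\bZ_{(p)})$, since the remaining contributions ($E^2_{0,2}=(\bZ\oplus\bR)\otimes\pi^s_2$ and the only relevant differential, which lands in $E^2_{0,1}$) are $2$-primary and disappear after inverting $2$. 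Under these identifications the composite is exactly $\sigma_*$, which proves (a). For (b), rationally $H_*(Y;\bQ)$ is the free graded-commutative algebra on $\bigoplus_{i\geq 1}H_i(\text{MT}\theta;\bQ)$, and $\sigma_*$ kills decomposables while restricting to the identity on the generators; since $H_1(\text{MT}\theta;\bQ)=0$, every class in degrees $2$ and $3$ is indecomposable, so $\sigma_*$ is an isomorphism there. Combining with \Cref{thm3} gives (a) for $g\geq 4$ (isomorphism) and $g\geq 3$ (epimorphism), and (b) for $g\geq 6$.

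The hard part here is organisational rather than conceptual: $\text{MT}\theta$ is not connective, and because $H_0(\text{MT}\theta;\bZ)=\bZ\oplus\bR$ it carries an uncountable supply of $0$-cells, so one must check that the homotopy-theoretic inputs (convergence of the AHSS, the Hurewicz identifications, and the structure theorem for rational infinite-loop-space homology) remain legitimate in this setting; the feature that rescues everything is that $\bR$ is uniquely divisible and therefore invisible modulo any integer, leaving only the $2$- and $3$-primary stable stems able to feed $\pi_1(\text{MT}\theta)$. The single genuinely external ingredient is McDuff's connectivity statement, used here precisely to force $H_3(\overline{\BH};\bZ)=0$ (equivalently $H_1(\text{MT}\theta;\bZ)=0$); any weakening of that result would require an independent verification of this vanishing.
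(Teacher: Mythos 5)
Your overall reduction is the same as the paper's: use \Cref{thm3} to pass to $\Omega^{\infty}_{\bullet}\mathrm{MT}\theta$, and then show the suspension/Hurewicz map into $H_*(\mathrm{MT}\theta)$ is an isomorphism after inverting $2$ and $3$ (resp.\ rationally) in degree $2$ (resp.\ $3$). The AHSS bookkeeping you do in degree $2$ and the Milnor--Moore argument in degree $3$ are fine and are essentially equivalent to the paper's use of the $(2p-4)$-connectivity of $\bS_{(p)}\to H\bZ_{(p)}$ and of the rational Hurewicz theorem.

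However, there is a genuine gap at the one place you flag as ``the single genuinely external ingredient.'' The $3$-connectivity of $\bar v\colon \overline{\BH}\to K(\bR,2)$ gives an isomorphism on $H_i$ only for $i\le 2$ and a \emph{surjection} on $H_3$; since $H_3(K(\bR,2);\bZ)=0$, that surjection carries no information, so you cannot conclude $H_3(\overline{\BH};\bZ)=0$ (equivalently $H_1(\mathrm{MT}\theta;\bZ)=0$) from connectivity. Indeed $\bar v$ is not $4$-connected: its homotopy fibre is $\overline{\overline{\BH}}$, which is $2$-connected with $\pi_3(\overline{\overline{\BH}})\cong\bR$ by Banyaga's theorem, so $\pi_3(\overline{\BH})\cong\bR\neq 0$. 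The vanishing of $H_3(\BH;\bZ)$ is true, but it is a theorem, not a connectivity statement: it requires knowing that the class $a\in H^3(\overline{\overline{\BH}};\bR)$ transgresses onto $x^2$, i.e.\ that the differential $d_4\colon H_4(K(\bR,2))\cong S^2_{\bQ}\bR\to H_3(\overline{\overline{\BH}})\cong\bR$ is the (surjective) multiplication map $m$ — this is \cite[Lemma 4]{MR707329} and is exactly the computation carried out in \Cref{thm9}. Since $H_1(\mathrm{MT}\theta)=0$ is load-bearing in your argument (it controls $\pi_1(\mathrm{MT}\theta)$ in part (a) and the absence of decomposables in degrees $2$ and $3$ in part (b)), the proof as written is incomplete. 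The easiest repair is the paper's: Kotschick--Morita's perfectness of $\dSymp(\Sigma,\partial)$ for $g\ge 3$, combined with \Cref{thm3}, gives $\pi_1(\mathrm{MT}\theta)=H_1(\Omega^{\infty}_{\bullet}\mathrm{MT}\theta;\bZ)=0$ outright; this supplies $\pi_1(\mathrm{MT}\theta)\otimes\bZ_{(p)}=0$ for (a) and $H_1(\mathrm{MT}\theta;\bQ)=\pi_1(\mathrm{MT}\theta)\otimes\bQ=0$ for (b), after which your AHSS and Hopf-algebra arguments go through (note also that in degree $2$ there is a $d_4$ landing in $E_{-2,3}\cong\bZ/24$ in addition to the $d_2$ into $E_{0,1}$, but it is likewise $(2,3)$-primary and harmless).
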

\begin{proof}
(a) Given the Thom isomorphism $H_*(\text{MT}\theta;\bZ)\cong H_{*+2}(\BH;\bZ)$ and \Cref{thm3}, it is enough to show that the map induced by the suspension map
\[
H_2(\Omega_{\bullet}^{\infty}\text{MT}\theta ;\bZ_{(p)})\to H_2(\text{MT}\theta; \bZ_{(p)}),
\]
is an isomorphism for $p>3$. Since $\dSymp(\Sigma,\partial)$ is perfect (\cite[Section 2.1]{kotschick2005signatures}) for $g(\Sigma)\geq 3$, the first homology of $\Omega_{\bullet}^{\infty}\text{MT}\theta$ is zero. Therefore by the Hurewicz theorem, we have the isomorphism
\[
\pi_2(\Omega_{\bullet}^{\infty}\text{MT}\theta)\xrightarrow{\cong} H_2(\Omega_{\bullet}^{\infty}\text{MT}\theta;\bZ).
\]
Recall that we have the following commutative diagram
\[
\begin{tikzcd}
\pi_2(\Omega^{\infty}_{\bullet}\text{MT}\theta)_{(p)}\arrow["\cong"]{r}\arrow["\cong"]{d}&\pi_2(\text{MT}\theta)_{(p)}\arrow["h"]{d}\\ H_2(\Omega^{\infty}_{\bullet}\text{MT}\theta;\bZ_{(p)})\arrow{r}&H_2(\text{MT}\theta;\bZ_{(p)}).
\end{tikzcd}
\] 
Hence, to show that the bottom horizontal map is an isomorphism, it suffices to prove that the right vertical map $h$ which is a Hurewicz map is an isomorphism. Note that $h$ is induced by the unit map from the localized sphere spectrum $\bS_{(p)}$ to the Eilenberg-Maclane spectrum $H\bZ_{(p)}$. We shall write this unit map as
\[
e:\bS_{(p)}\to H\bZ_{(p)}.
\]
Let $F_{(p)}$ denote the homotopy fiber of $e$. It is well-known (see e.g. \cite[Theorem 5.29]{hatcher2004spectral}) that the first nontrivial cohomology of $H\bZ_{(p)}$ in positive degrees appears in degree $2p-1$ and it is a $p$-torsion. Hence, the spectral sequence implies the first nontrivial cohomology group of $F_{(p)}$ in positive degrees appears in degree $2p-2$ and is a $p$-torsion. Therefore, by universal coefficient theorem the first nontrivial homology group of $F_{(p)}$ in positive degrees appears in degree $2p-3$. Thus, the map $e$ is $(2p-4)$-connected. Hence, for $2p-4>2$, the map $h$
\[
h: \pi_2(\text{MT}\theta)_{(p)}\to H_2(\text{MT}\theta; \bZ_{(p)}),
\]
induces an isomorphism.

(b) To prove that the map 
\[
e_3: H_3(\BdSymp(\Sigma,\partial);\bQ)\to H_5(\BH;\bQ),
\]
is an isomorphism for $g(\Sigma)\geq 6$, recall it suffices to show that the suspension map
\[
H_3(\Omega_{\bullet}^{\infty}\text{MT}\theta ;\bQ)\to H_3(\text{MT}\theta; \bQ),
\]
is an isomorphism.  To do so, consider the commutative diagram 
\[
\begin{tikzcd}
\pi_3(\Omega_{\bullet}^{\infty}\text{MT}\theta)\otimes \bQ\arrow["\cong"]{r}\arrow[two heads]{d}&\pi_3(\text{MT}\theta)\otimes\bQ\arrow["\cong"]{d}\\ H_3(\Omega_{\bullet}^{\infty}\text{MT}\theta;\bQ)\arrow{r}&H_3(\text{MT}\theta;\bQ).
\end{tikzcd}
\] 
The left vertical map is surjective by the rational Hurewicz theorem because $$H_1(\Omega_{\bullet}^{\infty}\text{MT}\theta;\bZ)=0.$$ The top horizontal map is an isomorphism by definition and the right vertical map is an isomorphism again by the rational Hurewicz theorem. Hence, the bottom horizontal map has to be an isomorphism.
\end{proof}
\begin{rem}
It seems possible to use the Adams spectral sequence to analyze what happens in part (a) at the primes 2 and 3, but we have not pursued this point.
\end{rem}
In particular the part (a) of the theorem implies that $H_2(\BdSymp(\Sigma,\partial);\bQ)\cong H_4(\BH;\bQ)$ for $g(\Sigma)\geq 4$. To find new nontrivial invariants of flat symplectic surface bundles, we shall prove $H_4(\BH;\bQ)$ is highly nontrivial as a $\bQ$-vector space. 

We define three classes in the cohomology of  $\BH$ with different coefficients. The first is induced by the class $e^2$ as a cohomology class in $H^4(\BH;\bQ)$ which gives rise to the first MMM-class. The second class is $ev$ as a real cohomogy class in $H^4(\BH;\bR)$. And the third  is a secondary class induced by the vanishing of $v^2=0$ in $H^4(\BH;\bR)$ as follows. Consider the map $v: \BH\to K(\bR,2)$. The class $v$ induces a map
\[
\tilde{v^2}:H_4(\BH;\bQ)\to H_4(K(\bR,2);\bQ)\cong S^2_{\bQ}\bR
\]
The class $v^2\in H^4(\BH;\bR)$ can be described as follows
\[
m\circ \tilde{v^2}: H_4(\BH;\bQ)\to S^2_{\bQ}\bR\xrightarrow{m} \bR,
\]
where $m$ is the natural map given by multiplication. Therefore, $\tilde{v^2}$ maps $H_4(\BH;\bQ)$ onto $\Ker(m)$.

\begin{thm}\label{thm9}
The map
\[
\begin{tikzcd}
(e^2,ev,\tilde{v^2}): H_4(\BH;\bQ)\arrow[two heads, ""]{r}& \bQ\oplus \bR\oplus \Ker(m)\cong \bQ\oplus S^2_{\bQ}\bR,
\end{tikzcd}
\] 
is surjective.
\end{thm}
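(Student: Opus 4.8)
The plan is to realize enough rational homology classes of $\BH$ in degree four by total spaces of flat symplectic surface bundles over closed surfaces, and to identify the three characteristic numbers $e^2$, $ev$ and $\tilde{v^2}$ with the first MMM-class and with Kotschick--Morita's extended-flux invariants of such bundles.

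First I reduce to flat symplectic surface bundles. For $g(\Sigma)$ in the stable range the map $\BdSymp(\Sigma,\partial)\to\Omega^\infty_\bullet\mathrm{MT}\theta$ of \Cref{thm3} induces a surjection on $H_2(-;\bQ)$, and composing with the stabilisation map $\Omega^\infty_\bullet\mathrm{MT}\theta\to\mathrm{MT}\theta$ followed by the Thom isomorphism gives the surjection $e_2\colon H_2(\BdSymp(\Sigma,\partial);\bQ)\to H_4(\BH;\bQ)$ of \eqref{eq9}. Since the degree-two rational homology of any space is spanned by images of fundamental classes of closed oriented surfaces, $H_4(\BH;\bQ)$ is spanned by the classes $[E]$ with $\pi\colon E\to M$ a flat symplectic $\Sigma$-bundle over a closed oriented surface $M$; here $E$ is equipped with the codimension-two foliation transverse to the fibres and with the closed fibrewise symplectic form $\Omega$ as transverse volume form, and $[E]\in H_4(\BH;\bQ)$ denotes the image of the fundamental class under the classifying map $F\colon E\to\BH$. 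Under $F$ the class $\theta^\ast e$ pulls back to the vertical Euler class $e(T_\pi)$ and $v$ pulls back to $[\Omega]\in H^2(E;\bR)$; since $\Omega\wedge\Omega=0$ we have $\langle v^2,[E]\rangle=0$, as it must be. So it suffices to produce such bundles whose images under $(e^2,ev,\tilde{v^2})$ span $\bQ\oplus\bR\oplus\Ker(m)$.

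For the $\bQ$-summand, $\langle e^2,[E]\rangle=\langle\pi_{!}(e(T_\pi)^2),[M]\rangle$ is the value of the first MMM-class of $\pi$ on $[M]$; the non-triviality of this pairing for flat symplectic surface bundles is reproved in \Cref{cor2} (originally Kotschick--Morita \cite{kotschick2005signatures}), and taking disjoint unions realises all of $\bQ$. For the remaining summand I use flat symplectic $\Sigma$-bundles over surfaces whose monodromy lies in $\Symp_0(\Sigma,\partial)$ and is assembled from commuting flux flows; because the flux homomorphism $\Symp_0(\Sigma,\partial)\to H^1(\Sigma,\partial;\bR)$ is surjective, the fluxes of the monodromy may be prescribed freely. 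For such a bundle, $[\Omega]$ decomposes along the Leray filtration of $\pi$ with its $H^1(M;\bR)\otimes H^1(\Sigma,\partial;\bR)$-component equal to the extended flux cocycle of the monodromy, as in \cite{kotschick2005signatures}. Evaluating $ev$ and $\tilde{v^2}=[\Omega]_\ast$ on $[E]$ in terms of this component identifies the composite $(ev,\tilde{v^2})\circ e_2$, under the splitting $S^2_{\bQ}\bR\cong\bR\oplus\Ker(m)$ coming from $m\colon S^2_{\bQ}\bR\twoheadrightarrow\bR$, with the Kotschick--Morita map $H_2(\BdSymp(\Sigma,\partial);\bZ)\to S^2_{\bQ}\bR$ built from the extended flux. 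Letting the prescribed fluxes vary over $H^1(\Sigma,\partial;\bR)$ and over all genera, and using that symmetric products of a spanning set span $S^2_{\bQ}\bR$, this composite is surjective; as $e_2$ is surjective, $(e^2,ev,\tilde{v^2})$ is surjective onto $\bQ\oplus\bR\oplus\Ker(m)\cong\bQ\oplus S^2_{\bQ}\bR$. (Together with \Cref{thm2'} this in particular recovers Kotschick--Morita's surjection onto $\bQ\oplus S^2_{\bQ}\bR$.)

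The step I expect to be hardest is the identification just made: showing that the transverse volume class $v$, pulled back to the total space of a flat symplectic surface bundle over a surface and read off in the Leray--Serre filtration, has $(1,1)$-component exactly the extended flux cocycle of \cite{kotschick2005signatures}, and that $ev$ and $\tilde{v^2}$ are the expected polynomial expressions in that cocycle and in $e(T_\pi)$. This is the only point that is genuinely geometric; the rest is bookkeeping with the Thom isomorphism, the Leray filtration of surface bundles, and the stable homology theorem \Cref{thm3}.
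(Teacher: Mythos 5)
Your route is genuinely different from the paper's, and as written it has a gap at the decisive step. The paper proves \Cref{thm9} with no geometric realization at all: it runs the Serre spectral sequence of the fibration $\overline{\overline{\BH}}\to \BH\to \mathrm{BSL}_2(\bR)\times K(\bR,2)$, using McDuff's theorem that the fiber is $2$-connected with $\pi_3(\overline{\overline{\BH}})\cong\bR$ (via Banyaga) and her explicit class $a\in H^3(\overline{\overline{\BH}};\bR)$ transgressing to $x^2$. The desired surjection is then just the edge homomorphism $H_4(\BH;\bQ)\twoheadrightarrow E^{4,0}_\infty=\Ker(d_4)\cong\bQ\oplus\bR\oplus\Ker(m)$; surjectivity is automatic once $d_4$ is identified with $m\circ\mathrm{proj}$. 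Your argument instead runs the paper's logic backwards: the paper deduces the Kotschick--Morita surjection (\Cref{cor2} and \Cref{cor4}) \emph{from} \Cref{thm9}, whereas you propose to deduce \Cref{thm9} from Kotschick--Morita-type bundle constructions. Note in particular that citing \Cref{cor2} for the nonvanishing of $\kappa_1$ is internally circular here; you must appeal to \cite{kotschick2005signatures} directly.

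The concrete gap is the sentence ``letting the prescribed fluxes vary \dots this composite is surjective.'' Surjectivity of $\mathrm{Flux}$ lets you prescribe the flux of a single symplectomorphism, but to build a flat bundle over a closed surface $M$ you need a representation $\pi_1(M)\to\Symp(\Sigma,\partial)$, e.g.\ a \emph{commuting} pair $\phi,\psi$ over $M=T^2$, and the value of the invariant is governed by the discontinuous cup product of $\mathrm{Flux}(\phi)$ and $\mathrm{Flux}(\psi)$. To realize $r\odot s$ for arbitrary $r,s\in\bR$ you need commuting symplectomorphisms whose fluxes are supported on classes $a^*,b^*$ with $a^*\cup b^*\neq 0$; flux flows supported near transversally intersecting curves do not commute for free, and producing such pairs is precisely the hard geometric content of \cite{kotschick2004characteristic}. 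So either you import their theorem as a black box (in which case your proof is fine modulo the identification of $(e^2,ev,\tilde{v^2})$ with $(\kappa_1,\tilde{\alpha})$ up to an automorphism of $\bQ\oplus S^2_{\bQ}\bR$ --- an identification the paper only establishes later via Kawazumi's contraction formula, and which you correctly flag as the delicate point), or you must supply those constructions yourself. By contrast, the step you single out as hardest --- that the $(1,1)$-component of $F^*v$ is the extended flux cocycle --- is already available as \cite[Lemma 8]{kotschick2005signatures} and is used elsewhere in the paper; it is not where the difficulty lies.
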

\begin{proof}
Consider the map
\[
\theta\times v: \BH\to \mathrm{BSL}_2(\bR)\times K(\bR,2),
\]
and let $  \overline{\overline{\BH}}$ denote the homotopy fiber of the map $\theta\times v$. We want to determine the image of the map induced by $\theta\times v$ on the fourth homology groups. McDuff proved (see \cite[Theorem 6.1]{mcduff1987applications}) that $ \overline{\overline{\BH}}$ is $2$-connected and as she observed in \cite[Corollary 1.3]{MR707329},  the result of Banyaga in \cite{MR490874} implies that \begin{equation}\label{ss}\pi_3(\overline{\overline{\BH}})\cong \bR.\end{equation} The geometric meaning of the  space $ \overline{\overline{\BH}}$, by the general theory of Haefliger structures in \cite{haefliger1971homotopy},  is that it classifies foliated trivialized $2$-plane bundle with a transverse volume form whose volume form is exact.

Let $x\in H^2(K(\bR,2);\bR)=\text{Hom}(\bR,\bR)$ be the fundamental class given by the identity. Using the calculation in \ref{eq7}, one can see that $x^2\in H^4(K(\bR,2);\bR)=\text{Hom}(S^2_{\bQ}\bR,\bR)$ corresponds to the natural map
\[
m: S^2_{\bQ}\bR\to \bR.
\]
 On the other hand the pullback of $x$ to $\BH$ is the volume form $v$, hence $v^2=0$. Since the map $\theta\times v$ is $3$-connected (\cite[Theorem 6.1]{mcduff1987applications}), the space $\BH$ is simply connected. Therefore, in the cohomology Serre spectral sequence for the fibration
\begin{equation}\label{eq8}\begin{gathered}
 \overline{\overline{\BH}}\to \BH\to \mathrm{BSL}_2(\bR)\times K(\bR,2),
\end{gathered}\end{equation}
the class $x^2$ is not hit by $d^2$ and $d^3$. Hence, there should be a class in $a\in H^3( \overline{\overline{\BH}};\bR)=\text{Hom}(\bR,\bR)$ that transgresses to $x^2$ i.e. $d^4(a)=x^2$. In fact $a$ in \cite[Lemma 4]{MR707329} is constructed by differential forms on foliated trivialized $2$-plane bundle with a transverse volume form whose volume form is exact. Therefore $a$ is an $\bR$-linear map in $\text{Hom}(\bR,\bR)$ and by scaling the isomorphism \ref{ss}, we can assume that $a$ corresponds to the identity in $\text{Hom}(\bR,\bR)$.

\begin{figure}[ht]\label{sss}
\[
\begin{tikzpicture}
\draw [<-] (-0.6,6) -- (-0.6,-0.5);
\draw [->] (-1,0)-- (9,0);
\node  at (0.2,0.2) {\small$\bQ$};
\node  at (0.2,-0.2) {$0$};
\node at (1.2,0.2) {\small$0$};
\node at (1.2,-0.2) {\small$1$};

\node at (2.5,0.2) {\small$\bQ\oplus\bR$};
\node at (2.5,-0.2) {\small$2$};

\node at (3.7,0.2) {\small$0$};
\node at (3.7,-0.2) {\small$3$};

\node at (5.5,0.2) {\small$\bQ\oplus\bR\oplus S^2_{\bQ}\bR$};
\node at (5.5,-0.2) {\small$4$};

\node at (7.2,0.2) {\small$0$};
\node at (7.2,-0.2) {\small$5$};

\node at (9.2,-0.2) {\small$p$};

\node at (0.2,1.2) {\small$0$};
\node at (-1,1.2) {\small$1$};
\node at (-1,0.2) {\small$0$};

\node at (0.2,2.2) {\small$0$};
\node at (-1,2.2) {\small$2$};

\node at (0.2,3.2) {\small$\bR$};
\node at (-1,3.2) {\small$3$};

\node at (0.2,4.2) {\small$H_4(\overline{\overline{\BH}})$};
\node at (-1,4.2) {\small$4$};

\node at (0.2,5.2) {\small$$};
\node at (-1,5.2) {\small$5$};
\node at (-1,6.2) {\small$q$};

\node at (1.2,2.2) {\small$0$};
\node at (2.5,2.2) {\small$0$};
\node at (3.7,2.2) {\small$0$};
\node at (5.5,2.2) {\small$0$};
\node at (7.2,2.2) {\small$0$};
\node at (1.2,3.2) {\small$0$};
\node at (2.5,3.2) {\small$\bR\oplus (\bR\otimes\bR)$};
\node at (1.2,4.2) {\small$0$};

\node at (1.2,1.2) {\small$0$};
\node at (2.5,1.2) {\small$0$};
\node at (3.7,1.2) {\small$0$};
\node at (5.5,1.2) {\small$0$};
\node at (7.2,1.2) {\small$0$};
\draw [->,red] (5.5,0.4)--(0.35,3.2);
\draw [->,red] (2.5,3.4)--(1,4);
\node at (2,2.1) {\small$\textcolor{red}{d_4}$};
\node at (2,3.9) {\small$\textcolor{red}{d_2}$};
\end{tikzpicture}\]
\caption{ Second page of the homology spectral sequence}
\end{figure}
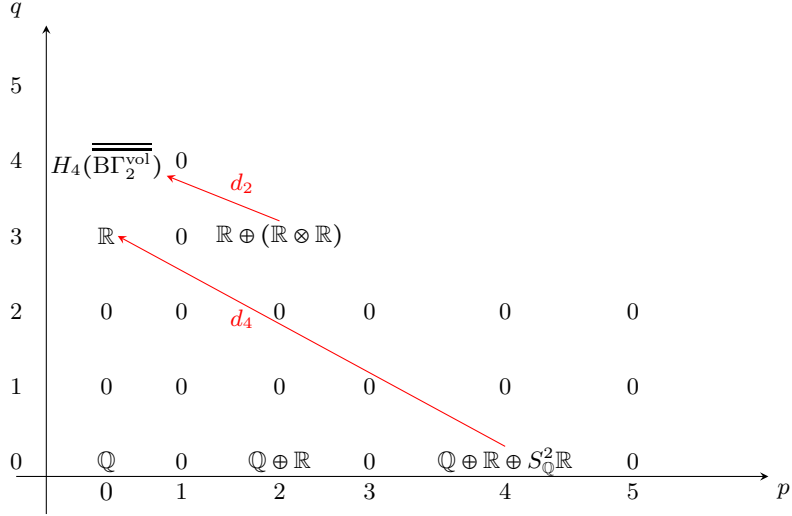
Now the homology Serre spectral sequence for the fibration \ref{eq8} looks like Figure $1$.
Since the transgression $d^4$ in the cohomology spectral sequence is induced by the differential $d_4$ in the homology spectral sequence, we deduce that 
\[
d_4: \bQ\oplus\bR\oplus S^2_{\bQ}\bR\xrightarrow{proj} S^2_{\bQ}\bR\xrightarrow{m} \bR,
\]
where the first map is the projection to the third factor. Hence, the kernel of $d_4$ is
\[
\Ker(d_4)=\bQ\oplus \bR\oplus \text{Ker}(m)\cong \bQ\oplus S^2_{\bQ}\bR.
\]
Therefore, we have 
\begin{align*}
H_2(\BH;\bQ)&\cong\bQ\oplus \bR,\\ H_3(\BH;\bQ)&\cong 0,\\ 0\to\Coker(d_2)\to H_4(\BH;\bQ)&\to\bQ\oplus S^2_{\bQ}\bR\to 0.
\end{align*}
\end{proof}
\begin{cor}\label{cor2}
For $g(\Sigma)\geq 3$, there is a surjective map
\[
\begin{tikzcd}
H_2(\BdSymp(\Sigma,\partial);\bQ)\arrow[two heads]{r}& \bQ\oplus S^2_{\bQ}\bR,
\end{tikzcd}
\] 
and for $g(\Sigma)\geq 4$, we have a short exact sequence
\[
0\to\Coker(d_2)\to H_2(\BdSymp(\Sigma,\partial);\bQ)\to\bQ\oplus S^2_{\bQ}\bR\to 0.
\]
\end{cor}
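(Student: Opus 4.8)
The plan is to deduce Corollary \ref{cor2} formally from Theorem \ref{thm2'} and Theorem \ref{thm9}, so that the argument is essentially bookkeeping. First I would record that tensoring the $\bZ[\frac16]$-statement of Theorem \ref{thm2'} with $\bQ$ (using that $\bQ$ is flat over $\bZ[\frac16]$) shows that the integrally defined map
\[
e_2\colon H_2(\BdSymp(\Sigma,\partial);\bQ)\to H_4(\BH;\bQ)
\]
of (\ref{eq9}) is an isomorphism for $g(\Sigma)\geq 4$ and an epimorphism for $g(\Sigma)\geq 3$. One could equivalently read this off part (a) of the preceding theorem localized at any prime $p>3$, but Theorem \ref{thm2'} already packages exactly the rational range we need.

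For the first assertion, valid for $g(\Sigma)\geq 3$, I would postcompose $e_2$ with the surjection
\[
(e^2,ev,\tilde{v^2})\colon H_4(\BH;\bQ)\to \bQ\oplus\bR\oplus\Ker(m)\cong \bQ\oplus S^2_{\bQ}\bR
\]
of Theorem \ref{thm9}. A composite of epimorphisms is an epimorphism, so this produces the desired surjection $H_2(\BdSymp(\Sigma,\partial);\bQ)\to \bQ\oplus S^2_{\bQ}\bR$.

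For the second assertion, valid for $g(\Sigma)\geq 4$, I would recall that the proof of Theorem \ref{thm9}, via the homology Serre spectral sequence of the fibration $\overline{\overline{\BH}}\to\BH\to\mathrm{BSL}_2(\bR)\times K(\bR,2)$, produced not merely the surjection but the short exact sequence
\[
0\to\Coker(d_2)\to H_4(\BH;\bQ)\to\bQ\oplus S^2_{\bQ}\bR\to 0.
\]
Since $e_2$ is an isomorphism in this range, transporting this exact sequence along $e_2$ yields
\[
0\to\Coker(d_2)\to H_2(\BdSymp(\Sigma,\partial);\bQ)\to\bQ\oplus S^2_{\bQ}\bR\to 0,
\]
which is the claim.

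There is no genuine obstacle here; the only points to check carefully are that the coefficient localization in Theorem \ref{thm2'} causes no difficulty after tensoring with $\bQ$ (it does not), and that the target $\bQ\oplus S^2_{\bQ}\bR$ of the composite really is the quotient occurring in Kotschick--Morita's surjection, so that the corollary does bear on their Problem 23. If one wished to avoid Theorem \ref{thm2'}, the same identification $H_2(\BdSymp(\Sigma,\partial);\bQ)\cong H_4(\BH;\bQ)$ could instead be extracted from Theorem \ref{thm3} together with the rational Hurewicz theorem, exactly as in the argument establishing part (b) above.
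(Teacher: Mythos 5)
Your proposal is correct and matches the paper's (implicit) argument: the paper likewise deduces the corollary by combining the fact that $e_2$ is rationally an epimorphism for $g(\Sigma)\geq 3$ and an isomorphism for $g(\Sigma)\geq 4$ (part (a) of the theorem preceding \Cref{thm9}, equivalently \Cref{thm2'} after tensoring with $\bQ$) with the surjection and short exact sequence for $H_4(\BH;\bQ)$ established in the proof of \Cref{thm9}. The bookkeeping points you flag (flatness of $\bQ$ over $\bZ[\tfrac16]$, transporting the exact sequence along the isomorphism) are exactly the ones needed, and they go through.
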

Hence as a corollary, similar to \Cref{prop1} we obtain  the main theorem of Kotschick and Morita   in \cite{kotschick2004characteristic}:
\begin{cor}\label{cor4}
There is a surjective map
\[
\begin{tikzcd}
H_{2k}(\BdSymp(\Sigma,\partial);\bQ) \arrow[two heads, ""]{r} & \bQ\oplus S^2_{\bQ}\bR\oplus \cdots\oplus S^k(S^2_{\bQ}\bR).
\end{tikzcd}
\]
for $g(\Sigma)\geq 3k$.
\end{cor}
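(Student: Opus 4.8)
The plan is to deduce this from \Cref{thm3} and the Hopf algebra structure of the rational homology of $\Omega^{\infty}_{\bullet}\mathrm{MT}\theta$, following the template of the proof of \Cref{prop1}: there the degree-one primitives produced exterior powers, and here the degree-two primitives will produce symmetric powers.

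First, since $g(\Sigma)\geq 3k$ we have $2k\leq 2g(\Sigma)/3$, so \Cref{thm3} (together with the homological stability of \Cref{thm1}) furnishes a surjection
\[
H_{2k}(\BdSymp(\Sigma,\partial);\bQ)\twoheadrightarrow H_{2k}(\Omega^{\infty}_{\bullet}\mathrm{MT}\theta;\bQ).
\]
I would use only the epimorphism part of the stability range here; that is precisely what lets the hypothesis be $g(\Sigma)\geq 3k$ rather than $g(\Sigma)\geq 3k+1$. It therefore suffices to produce a surjection of $H_{2k}(\Omega^{\infty}_{\bullet}\mathrm{MT}\theta;\bQ)$ onto $\bigoplus_{j=0}^{k}S^{j}(S^2_\bQ\bR)$.

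Next, set $A_*=H_*(\Omega^{\infty}_{\bullet}\mathrm{MT}\theta;\bQ)$. This is a connected, graded-commutative, cocommutative Hopf algebra over $\bQ$, hence by the structure theorem (Milnor--Moore/Leray) it is the free graded-commutative algebra on its module of primitives $P_*$. Since $\dSymp(\Sigma,\partial)$ is perfect for $g(\Sigma)\geq 3$, we have $H_1(\Omega^{\infty}_{\bullet}\mathrm{MT}\theta;\bQ)=0$, so $A_*$ has no decomposables in degree two and $P_2=A_2=H_2(\Omega^{\infty}_{\bullet}\mathrm{MT}\theta;\bQ)$. Consequently the polynomial subalgebra $S(P_2)\subseteq A_*$ is a direct summand of $A_*$ as a graded $\bQ$-vector space, which gives a projection $A_{2k}\twoheadrightarrow S^{k}(P_2)$. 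Finally, the surjection of \Cref{cor2} is by construction the composite of $H_2(\BdSymp(\Sigma,\partial);\bQ)\to H_2(\Omega^{\infty}_{\bullet}\mathrm{MT}\theta;\bQ)$ with the stabilization map, the Thom isomorphism $H_2(\mathrm{MT}\theta;\bQ)\cong H_4(\BH;\bQ)$, and $(e^2,ev,\tilde{v^2})$ of \Cref{thm9}; its last three stages give a surjection $P_2=H_2(\Omega^{\infty}_{\bullet}\mathrm{MT}\theta;\bQ)\twoheadrightarrow\bQ\oplus S^2_\bQ\bR$. Applying the (right exact) functor $S^{k}(-)$ and composing everything yields
\[
H_{2k}(\BdSymp(\Sigma,\partial);\bQ)\twoheadrightarrow A_{2k}\twoheadrightarrow S^{k}(P_2)\twoheadrightarrow S^{k}(\bQ\oplus S^2_\bQ\bR)\cong\bigoplus_{j=0}^{k}S^{j}(S^2_\bQ\bR),
\]
using the canonical decomposition $S^{k}(V\oplus W)=\bigoplus_{j=0}^{k}S^{k-j}(V)\otimes S^{j}(W)$ with $V=\bQ$.

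I do not expect a genuine obstacle: the step that needs the most care is the Hopf-algebra argument, namely checking that $S(P_2)$ really is a graded-vector-space summand of $A_*$ (this is immediate from the freeness of $A_*$ on $P_*$) and that the surjection of \Cref{cor2} factors through $H_2(\Omega^{\infty}_{\bullet}\mathrm{MT}\theta;\bQ)$ (immediate from the definition of $e_2$ in \eqref{eq9}). The only remaining subtlety is bookkeeping the stable range so as to land exactly at $g(\Sigma)\geq 3k$.
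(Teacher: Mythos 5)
Your argument is correct and is essentially the paper's own proof: the paper likewise composes the stability surjection onto $H_{2k}(\Omega^{\infty}_{\bullet}\mathrm{MT}\theta;\bQ)$ with the Hopf-algebra (Milnor--Moore) observation that, since $H_1(\Omega^{\infty}_{\bullet}\mathrm{MT}\theta;\bQ)=0$ by perfectness of $\dSymp(\Sigma,\partial)$, the degree-two homology is primitive and its symmetric powers split off, and then applies the surjection $H_2(\Omega^{\infty}_{\bullet}\mathrm{MT}\theta;\bQ)\twoheadrightarrow \bQ\oplus S^2_{\bQ}\bR$ coming from \Cref{thm9}. Your write-up merely makes explicit the details (freeness on primitives, right-exactness of $S^k$, and the range bookkeeping) that the paper leaves as ``similar to'' its earlier proposition.
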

\begin{rem}
Note that the above invariants can be defined on $H_{2k}(\BdSymp(\Sigma);\bQ)$ when the surface $\Sigma$ is a closed surface. Therefore \Cref{cor4} also holds for closed surfaces.
\end{rem}
\subsection{Relation to the Kotschick-Morita classes} Note that all the nontrivial invariants constructed in  \Cref{cor4}, are induced from the map
\[
H_2(\BdSymp(\Sigma);\bQ)\to \bQ\oplus S^2_{\bQ}\bR.
\]
Recall that the first $\bQ$ summand is induced by $\kappa_1$. There are two ways to describe the map to the second factor. One way is what Kotschick and Morita did in \cite[Section 1]{kotschick2004characteristic}  which is roughly as follows. The extended flux homomorphism gives rise to a twisted cohomology class
\[
[ \widetilde{\text{Flux}}]\in H^1(\mathrm{BSymp}^{\delta}(\Sigma_g); H^1(\Sigma_g;\bR)).
\]
Then the square of this class lives in 
\[
[ \widetilde{\text{Flux}}]^2\in H^2(\mathrm{BSymp}^{\delta}(\Sigma_g); H^1(\Sigma_g;\bR)\otimes H^1(\Sigma_g;\bR)).
\]
Now one can use the intersection from $\iota:H^1_{\bR}\otimes H^1_{\bR}\to \bR$ to obtain a class \begin{equation}\label{alpha}\alpha\in H^2(\mathrm{Symp}^{\delta}(\Sigma_g);\bR).\end{equation}  As explained in \cite[Definition 1]{kotschick2004characteristic}, one can refine the intersection form $\iota$ by the {\it discontinuous} cup product $\tilde{\iota}$ so that
\[
\iota: H^1_{\bR}\otimes H^1_{\bR}\xrightarrow{\tilde{\iota}} S^2_{\bQ}\bR\xrightarrow{m}\bR.
\]
Therefore, the class $\alpha$ is induced from a class $\tilde{\alpha}\in H^2(\mathrm{BSymp}^{\delta}(\Sigma_g); S^2_{\bQ}\bR)$. The class $\tilde{\alpha}$ induces a map
\[
\tilde{\alpha}: H_2(\BdSymp(\Sigma);\bQ)\to  S^2_{\bQ}\bR.
\]

The  way  we would like to think about these $S^2_{\bQ}\bR$-valued characteristic classes is to describe their evaluation on  a class in $a\in H_2(\BdSymp(\Sigma);\bQ)$. Recall that we can represent the class $a$ as the image of a map  $\Sigma'\to \BdSymp(\Sigma)$ for some surface $\Sigma'$. Therefore the class $a$ gives rise to a symplectic flat surface bundle $\Sigma\to E\to \Sigma'$.  By the general theory of Haefliger spaces, the foliation on the total space $E$ gives rise to a map  $f:E\to\BH$ that is well-defined up to homotopy. Consider the diagram
\[
\begin{tikzpicture}[node distance=1.4cm, auto]
  \node (C) {$ \Sigma$};
  \node (E) [right of=C, node distance=1.4cm]{$ E$};
  \node (F) [below of=E]{$ \Sigma'.$};
    \node (A) [right of=E]{$ \BH$};
        \node (B) [right of=A,node distance=2.4cm]{$ K(\bR,2)$};
    \node (D) [right of=B,node distance=2.4cm]{$ K(\bR,4)$};
  \draw [->] (C) to node {$$} (E);
  \draw [<-] (F) to node {$\pi$} (E);
    \draw [->] (E) to node {$f$} (A);
  \draw [->] (A) to node {$e+v$} (B);
    \draw [->] (B) to node {$\text{Sq}^2$} (D);
 \end{tikzpicture}
\]
To obtain a number associated to the fundamental class of $E$, we take the induced map on homology by $ (e+v)^2\circ f$:
\[
H_4(E;\bQ)\xrightarrow{f_*} H_4(\BH;\bQ)\xrightarrow{(e+v)_*} S^2_{\bQ}\bR \xrightarrow{m} \bR.
\]
Hence, we can associate to $a$ the number $((e+v)^2\circ f)_*([E])\in \bR$. We can also refine this class by assigning to $a$  the element $((e+v)\circ f)_*([E])\in S^2_{\bQ}\bR$. 

One can see that these two ways of constructing invariants of symplectic flat surface bundles agree up to sign using an observation due to Kawazumi (\cite[Section 7]{kotschick2004characteristic}). He noted that the contraction formula  (\cite[Theorem 6.2]{kawazumi2001primary}) implies $$\pi_!((e+v)^2)= -\alpha.$$ Therefore we have  $((e+v)^2\circ f)_*([E])=-\alpha(a)$. In fact one can use the contraction formula  with more care to  show that $$((e+v)\circ f)_*([E])=-\tilde{\alpha}\in S^2_{\bQ}\bR.$$

So in order to relate the class $\tilde{\alpha}$ to our calculation in \Cref{thm9}, we need to relate the map
\[
(e+v)_*: H_4(\BH;\bQ)\to S^2_{\bQ}\bR,
\]
to the map we obtained in the spectral sequence in Figure $1$
\[
H_4(\BH;\bQ)\to E^{4,0}_{\infty}=\Ker(d_4)\cong \bQ\oplus \bR\oplus \Ker(m: S^2_{\bQ}\bR\to \bR).
\]
Recall that this map in the spectral sequence is induced by the map
\[
\BH\xrightarrow{(e,v)} K(\bQ,2)\times K(\bR,2).
\]
Therefore, by factoring the map $(e+v)$ as follows
\[
\BH\xrightarrow{(e,v)}  K(\bQ,2)\times K(\bR,2)\xrightarrow{\text{sum}}K(\bR,2),
\]
we deduce  that $(e+v)_*$ is given by the composition
\[
 H_4(\BH;\bQ)\to E^{4,0}_{\infty}\to \bR\oplus \Ker(m: S^2_{\bQ}\bR\to \bR)\cong S^2_{\bQ}\bR.
\]
Now given the relation between these two points of view, we prove \Cref{thm10}.
\begin{proof}[Proof of \Cref{thm10}]
Recall from the proof of \Cref{thm3'}, we can consider the following composition of maps
{\small $$
 \mathrm{B}\deHam(\Sigma)\simeq \BdHam(\Sigma)\hcoker \Symp(\Sigma)\longrightarrow \mathrm{B}\bR^{\delta}\hker\text{Bun}(T\Sigma,{\beta}^*\gamma)\hcoker \Symp(\Sigma) \to \mathrm{BB}\bR^{\delta} \simeq K(\bR,2),
$$}
which gives rise to a cohomology class $a\in H^2(\BdeHam(\Sigma);\bR)$. Therefore, we have a homotopy commutative diagram
\begin{equation}\label{R}
  \begin{gathered}
 \begin{tikzpicture}[node distance=2cm, auto]
  \node (A) {$\BdeHam(\Sigma, \text{rel }D^2)$};
  \node (B) [below of=A]{$\Omega^{\infty}\text{MT}\beta$};
  \node (C) [right of=A, node distance=3cm]{$\BdeHam(\Sigma)$};
  \node (D) [right of=B, node distance=3cm]{$\mathrm{B}\bR^{\delta}\hker \Omega^{\infty}\text{MT}\beta$};
  \node (E) [right of=C, node distance=3cm]{$K(\bR,2)$};
  \node (F) [below of=E]{$K(\bR,2),$};
  \draw [->] (A) to node {$$} (C);
  \draw [->] (B) to node {$$} (D);
  \draw [->] (A) to node {$$} (B);
  \draw [->] (C) to node {$$} (D);
  \draw [->] (C) to node {$a$} (E);
  \draw [->] (E) to node {$=$} (F);
  \draw [->] (D) to node {$$} (F);
 \end{tikzpicture}
   \end{gathered}
 \end{equation}
 where the two first vertical maps are homology isomorphisms in the stable range. Since the bottom row is a fibration sequence, the class $a\in H^2(\BdeHam(\Sigma, \text{rel }D^2);\bR)$ vanishes for $g(\Sigma)\geq 3$. Therefore, there exists a map 
 \[
 \BdeHam(\Sigma, \text{rel }D^2)\to \text{hofib}(a),
 \]
 that induces a homology isomorphism in the stable range. Hence, we need to show that $a=\frac{\kappa_1}{4-4g(\Sigma)}$. 
 
  Consider the universal $\Sigma$-bundle 
\begin{equation}\label{12}
\begin{gathered}
\begin{tikzpicture}[node distance=2cm, auto]
  \node (A) {$ \Sigma$};
  \node (B) [ right of=A] {$\Sigma\hcoker \deHam(\Sigma)$};
  \node (D) [below of=B, node distance=1.3cm] {$\BdeHam(\Sigma),$};
  \draw [->] (A) to node {$$} (B);
  \draw [->] (B) to node {$\pi$} (D);
\end{tikzpicture}
\end{gathered}
\end{equation}
whose holonomy lies in $\deHam(\Sigma)$. With abuse of notation, let the class $e+v\in H^2(\Sigma\hcoker \deHam(\Sigma);\bR)$ also denote the sum of the Euler class of the vertical tangent bundle and the transverse volume form.  Note that the Serre spectral sequence calculating the cohomology of $H^*(\Sigma\hcoker \deHam(\Sigma);\bR)$ collapses (see \cite[Proposition 3.1]{morita1987characteristic}). Therefore we have
\[
H^2(\Sigma\hcoker \deHam(\Sigma);\bR)\cong E^{2,0}_2\oplus E^{1,1}_2\oplus E^{0,2}_2.
\]
The projection of $e+v$ to $E^{0,2}_2$ is zero since the volume is normalized and the restriction of $e+v$ to each fiber is an exact form. The projection of $e+v$ to $E^{1,1}_2$ is the extended Flux by \cite[Lemma 8]{kotschick2005signatures} and therefore by definition of the extended Hamiltonians, the projection of $e+v$ to $E^{1,1}_2$ is zero. As we shall show in the claim below, we have $\pi^*(a)=e+v$. Hence, $a$ is the unique cohomology class in $ H^2(\BdeHam(\Sigma);\bR)$ that $\pi^*(a)=e+v$. 

\noindent{\bf Claim:} The class $\pi^*(a)$ is equal to $e+v\in H^2(\Sigma\hcoker \deHam(\Sigma);\bR)$.

\noindent{\it Proof of the claim:} Since on the total space $\Sigma\hcoker \deHam(\Sigma)$ there exists a codimension $2$ Haefliger structure with a transverse volume form, we obtain a bundle map
\[
\begin{tikzpicture}[node distance=2.2cm, auto]
  \node (A) {$ T\pi$};
  \node (B) [ below of=A, node distance=1.3cm] {$\Sigma\hcoker \deHam(\Sigma)$};
  \node (D) [right of=B] {$\BH,$};
    \node (C) [right of=A] {$\theta^*(\gamma)$};
  \draw [->] (A) to node {$$} (B);
  \draw [->] (A) to node {$$} (C);
    \draw [->] (B) to node {$$} (D);
    \draw [->] (C) to node {$$} (D);
\end{tikzpicture}
\]
where $T\pi$ is the vertical tangent bundle for the surface bundle (\ref{12}). Hence to prove the claim, it is enough to show that the following diagram is homotopy commutative
\[
\begin{tikzpicture}[node distance=2.2cm, auto]
  \node (A) {$\Sigma\hcoker \deHam(\Sigma)$};
  \node (B) [ below of=A, node distance=1.5cm] {$\BdeHam(\Sigma)$};
  \node (D) [right of=B] {$K(\bR,2).$};
    \node (C) [right of=A] {$\BH$};
  \draw [->] (A) to node {$\pi$} (B);
  \draw [->] (A) to node {$$} (C);
    \draw [->] (B) to node {$a$} (D);
    \draw [->] (C) to node {$e+v$} (D);
\end{tikzpicture}
\]
To do so, we first give a different description of the map $\Sigma\hcoker \deHam(\Sigma)\to K(\bR,2)$ induced by $e+v$. Recall from (\ref{11}), that our point-set model for $\BdeHam(\Sigma)$ is $\BdHam(\Sigma)\hcoker\Symp(\Sigma)$. Also recall from the diagram \ref{W} that there is a $\Symp(\Sigma)$-equivariant map from $\BdHam(\Sigma)$ to $X$ which is a homology isomorphism. Hence, we have a map between the $\Sigma$-bundles
\[
\begin{tikzpicture}[node distance=3.2cm, auto]
  \node (A) {$\Sigma\hcoker \deHam(\Sigma)$};
  \node (B) [ below of=A, node distance=1.5cm] {$\BdeHam(\Sigma)$};
  \node (D) [right of=B] {$X\hcoker \Symp(\Sigma),$};
    \node (C) [right of=A] {$(X\times \Sigma)\hcoker \Symp(\Sigma)$};
  \draw [->] (A) to node {$\pi$} (B);
  \draw [->] (A) to node {$$} (C);
    \draw [->] (B) to node {$$} (D);
    \draw [->] (C) to node {$\pi'$} (D);
\end{tikzpicture}
\]
where the horizontal maps induce homology isomorphisms. Thus, it is enough to prove the claim for the $\Sigma$-bundle $\pi'$. From (\ref{X}), we have a homotopy commutative diagram with $\Symp(\Sigma)-$equivariant maps 
\[
 \begin{tikzpicture}[node distance=1.5cm, auto]
  \node (B) {$X$};
  \node (C) [right of=B, node distance=4.2cm]{$K(\bR,2)$};
  \node (E) [below of=B]{$ \text{Bun}_{0}(T\Sigma,{\theta}^*\gamma)$};
  \node (F) [right of=E, node distance=4.2cm]{$ \text{Map}_{0}(\Sigma,K(\bR,2)).$};
  \draw [->] (B) to node {$$} (C);
    \draw [->] (C) to node {$$} (F);
    \draw [->] (B) to node {$g$} (E);
  \draw [->] (E) to node {$-\circ (e+v)$} (F);
 \end{tikzpicture}
  \]
Therefore, the $\Symp(\Sigma)-$equivariant map induced by $e+v$ from $X$ to $ \text{Map}_{0}(\Sigma,K(\bR,2))$ factors through constant maps that are identified with $K(\bR,2)$. Hence, we have a commutative diagram with  $\Symp(\Sigma)-$equivariant maps
\[
 \begin{tikzpicture}[node distance=1.5cm, auto]
  \node (B) {$X\times \Sigma$};
  \node (C) [right of=B, node distance=5.2cm]{$ \text{Map}_{0}(\Sigma,K(\bR,2))\times \Sigma$};
  \node (E) [below of=B]{$X$};
  \node (F) [right of=E, node distance=5.2cm]{$K(\bR,2),$};
  \draw [->] (B) to node {$g\circ (e+v)\times id$} (C);
    \draw [->] (C) to node {$$} (F);
    \draw [->] (B) to node {$$} (E);
  \draw [->] (E) to node {$$} (F);
 \end{tikzpicture}
  \]
where the left vertical map is projection and the right vertical map is the evaluation map. Since the action of $\Symp(\Sigma)$ on $K(\bR,2)$ is trivial, we obtain the homotopy commutative diagram
\[
 \begin{tikzpicture}[node distance=1.5cm, auto]
  \node (B) {$(X\times \Sigma)\hcoker \Symp(\Sigma)$};
  \node (C) [right of=B, node distance=6.8cm]{$(\text{Map}_{0}(\Sigma,K(\bR,2))\times \Sigma)\hcoker \Symp(\Sigma)$};
  \node (E) [below of=B]{$X\hcoker \Symp(\Sigma)$};
  \node (F) [right of=E, node distance=6.8cm]{$K(\bR,2).$};
  \draw [->] (B) to node {$g\circ (e+v)\times id$} (C);
    \draw [->] (C) to node {$$} (F);
    \draw [->] (B) to node {$\pi'$} (E);
  \draw [->] (E) to node {$a$} (F);
 \end{tikzpicture}
  \]
So the $\pi'^*(a)$ is the same as the class induced by $e+v$ on the total space of the surface bundle $\pi'$. $ \blacksquare$

Therefore, we have
\begin{align*}
\pi_!(e(e+v))&= \pi_!(e\pi^*(a))\\
\kappa_1+\pi_!(ev)&= (2-2g)a.
\end{align*}
From Kawazumi's argument ([KM07, Section 7]) we have $\pi_!(ev)=-(\kappa_1+\alpha)/2$ where $\alpha$ is the class defined in (\ref{alpha}). By definition of the class $\alpha$, it is zero in $H^2(\BdeHam(\Sigma);\bR)$ because it is defined by the square of the extended flux which vanishes on the extended Hamiltonian group. Therefore, we obtain 
\[
a=\frac{\kappa_1+\pi_!(ev)}{2-2g(\Sigma)}=\frac{\kappa_1}{4-4g(\Sigma)}.
\]
 \end{proof}

\subsection{Discussion about higher dimensions} Since the method of Kotschick and Morita heavily relies on the theory of surfaces, it is not obvious how to generalize their calculations to higher dimensions. One possible generalization though of our method is to consider the volume preserving diffeomorphisms of high dimensional analogue of surfaces. So let $(\W,\omega)$ denote a pair of a manifold diffeomorphic to $\SnSn \backslash \text{int} D^{2n}$ and a volume form $\omega$. Let $\dDiff_{\omega}(\W,\partial)$ denote the discrete $\omega$-preserving compactly supported diffeomorphisms of $\W\backslash \partial \W$.

To introduce the relevant tangential structure in this case, let $\mathrm{BSL}_{2n}(\bR)\langle n\rangle$ and $\mathrm{B}\Gamma^{\text{vol}}_{2n}\langle n\rangle$ be the $n$-connected covers of $\mathrm{BSL}_{2n}(\bR)$ and $\mathrm{B}\Gamma^{\text{vol}}_{2n}$ respectively. Given  that $\theta$ is a $2n$-connected map \cite{haefliger1971homotopy}, we have the following homotopy pullback diagram
   \[
 \begin{tikzpicture}[node distance=2cm, auto]
  \node (A) {$\mathrm{B}\Gamma^{\text{vol}}_{2n}$};
  \node (B) [below of=A] {$\mathrm{BSL}_{2n}(\bR)$.};
   \node (C) [left of= A, node distance=3.2cm] {$\mathrm{B}\Gamma^{\text{vol}}_{2n}\langle n\rangle$};
  \node (D) [below of=C] {$\mathrm{BSL}_{2n}(\bR)\langle n\rangle$};
  \draw [->] (C) to node {$\theta\langle n\rangle$}(D);
  \draw [->] (A) to node {$\theta$}(B);
  \draw [->] (C) to node {$\nu$}(A);
  \draw [->] (D) to node {$\nu\langle n\rangle$}(B);
\end{tikzpicture}
\]
\begin{defn}
Let $\gamma$ be the tautological bundle over $\mathrm{BSL}_{2n}(\bR)$. Let $\text{MT}\theta\langle n\rangle$ denote the Thom spectrum of $(\theta\circ\nu)^*(-\gamma)$. 
\end{defn}
Using the same idea as \Cref{sec2} and \cite[Theorem 1.4]{galatius2014homological}, one can show that $H_*(\BdDiff_{\omega}(\W,\partial);\bZ)$ is independent of $g$ as long as $*\leq (g-3)/2$. Moreover, there is a map 
\[
\BdDiff_{\omega}(\W,\partial)\to \Omega^{\infty}\text{MT}\theta\langle n\rangle,
\]
that induces a homology isomorphism in the stable range onto the connected component that it hits.  Similar to \Cref{prop1}, we obtain a surjective map 
\[
\begin{tikzcd}
H_*(\BdDiff_{\omega}(\W,\partial);\bQ)\arrow[two heads]{r} & H_{*+2n}(\mathrm{B}\Gamma^{\text{vol}}_{2n}\langle n\rangle;\bQ),
\end{tikzcd}
\]
for $*\leq (g-3)/2$. 

In order to detect nontrivial classes in $H_*(\mathrm{B}\Gamma^{\text{vol}}_{2n}\langle n\rangle;\bQ)$ we can use a fiber sequence similar  to \ref{eq8}. Let $\overline{\overline{\mathrm{B}\Gamma^{\text{vol}}_{2n}}}$ denote the homotopy fiber of 
\[
\mathrm{B}\Gamma^{\text{vol}}_{2n}\xrightarrow{(\theta, v)} \mathrm{BSL}_{2n}(\bR)\times K(\bR,2n).
\]
McDuff showed that $\overline{\overline{\mathrm{B}\Gamma^{\text{vol}}_{2n}}}$ is $2n$-connected. Hence, we have a fiber sequence
\begin{equation}\label{eq10}\begin{gathered}
\overline{\overline{\mathrm{B}\Gamma^{\text{vol}}_{2n}}}\to \mathrm{B}\Gamma^{\text{vol}}_{2n}\langle n\rangle\to \mathrm{BSL}_{2n}(\bR)\langle n\rangle\times K(\bR,2n).
\end{gathered}\end{equation}
Similar to \cite[Lemma 4]{MR707329}, there is a differential form $a\in H^{4n-1}(\overline{\overline{\mathrm{B}\Gamma^{\text{vol}}_{2n}}};\bR)$ that transgresses to $x^2$ where $x$ is the fundamental class in $H^{2n}(K(\bR,2n);\bR)$. Therefore in the homology spectral sequence for the fibration \ref{eq10}, we have a map
\[
S^2_{\bQ}\bR\hookrightarrow E^{4n,0}_{4n}\xrightarrow{d_{4n}} H_{4n-1}(\overline{\overline{\mathrm{B}\Gamma^{\text{vol}}_{2n}}};\bQ),
\]
so that the map 
\[
S^2_{\bQ}\bR\to H_{4n-1}(\overline{\overline{\mathrm{B}\Gamma^{\text{vol}}_{2n}}};\bQ)\xrightarrow{a}\bR,
\]
is the multiplication map $m:S^2_{\bQ}\bR\to \bR$, but it is not clear to the author whether $\Ker(m)\subset \Ker(d_{4n})$.
\begin{problem}
Prove or disprove that the map 
\[
H_{2n}(\BdDiff_{\omega}(\WW,\partial);\bQ)\to \Ker(m: S^2_{\bQ}\bR\to \bR),
\]
is surjective for $n\leq (g-3)/4$.
\end{problem}
\bibliographystyle{alpha}
\bibliography{reference}

\begin{thebibliography}{GMTW09}

\bibitem[Ban78]{MR490874}
Augustin Banyaga.
\newblock Sur la structure du groupe des diff\'eomorphismes qui pr\'eservent
  une forme symplectique.
\newblock {\em Comment. Math. Helv.}, 53(2):174--227, 1978. MR490874, Zbl 0393.58007

\bibitem[Ban97]{MR1445290}
Augustin Banyaga.
\newblock {\em The structure of classical diffeomorphism groups}, volume 400 of
  {\em Mathematics and its Applications}.
\newblock Kluwer Academic Publishers Group, Dordrecht, 1997. MR1445290, Zbl 0874.58005

\bibitem[Bow10]{bowden2010two}
Jonathan Bowden.
\newblock {\em Two-dimensional foliations on four-manifolds}.
\newblock PhD thesis, LMU, 2010. Zbl 1296.53003

\bibitem[Bow11]{MR2826937}
Jonathan Bowden.
\newblock Flat structures on surface bundles.
\newblock {\em Algebr. Geom. Topol.}, 11(4):2207--2235, 2011. MR2826937, Zbl 0185.32901

\bibitem[EE69]{earle1969fibre}
Clifford~J Earle and James Eells.
\newblock A fibre bundle description of teichm{\"u}ller theory.
\newblock {\em Journal of Differential Geometry}, 3(1-2):19--43, 1969.  MR0276999

\bibitem[ES70]{MR0277000}
C.~J. Earle and A.~Schatz.
\newblock Teichm\"uller theory for surfaces with boundary.
\newblock {\em J. Differential Geometry}, 4:169--185, 1970. MR0277000, Zbl 0194.52802

\bibitem[GKF72]{MR0312531}
I.~M. Gelfand, D.~I. Kalinin, and D.~B. Fuks.
\newblock The cohomology of the {L}ie algebra of {H}amiltonian formal vector
  fields.
\newblock {\em Funkcional. Anal. i Prilo\v zen.}, 6(3):25--29, 1972. MR0312531, Zbl 0259.57023

\bibitem[GMTW09]{galatius2009homotopy}
S{\o}ren Galatius, Ib~Madsen, Ulrike Tillmann, and Michael Weiss.
\newblock The homotopy type of the cobordism category.
\newblock {\em Acta mathematica}, 202(2):195--239, 2009.  MR2506750, Zbl 1221.57039

\bibitem[GRW14]{galatius2012stable}
S{\o}ren Galatius and Oscar Randal-Williams.
\newblock Stable moduli spaces of high dimensional manifolds.
\newblock {\em Acta Mathematica}, 2014. MR3207759, Zbl 1377.55012

\bibitem[GRW17]{galatius2014homological}
S{\o}ren Galatius and Oscar Randal-Williams.
\newblock Homological stability for moduli spaces of high dimensional
  manifolds. {I}.
\newblock {\em Journal of the American Mathematical Society}, 2017. MR3718454, Zbl 06802407

\bibitem[Hae71]{haefliger1971homotopy}
Andr{\'e} Haefliger.
\newblock Homotopy and integrability.
\newblock In {\em Manifolds-Amsterdam 1970}, pages 133--163. Springer, 1971.  MR0285027, Zbl 0215.52403

\bibitem[Har83]{harer1983second}
John Harer.
\newblock The second homology group of the mapping class group of an orientable
  surface.
\newblock {\em Inventiones Mathematicae}, 72(2):221--239, 1983.  MR0700769, Zbl 0533.57003

\bibitem[Har85]{harer1985stability}
John~L Harer.
\newblock Stability of the homology of the mapping class groups of orientable
  surfaces.
\newblock {\em Annals of Mathematics}, pages 215--249, 1985. MR0786348, Zbl 0579.57005

\bibitem[Hat04]{hatcher2004spectral}
Allen Hatcher.
\newblock Spectral sequences in algebraic topology.
\newblock {\em Unpublished book project, available on http://www. math.
  cornell. edu/hatcher/SSAT/SSATpage. html}, 2004.

\bibitem[Hur83]{MR712266}
Steven Hurder.
\newblock Global invariants for measured foliations.
\newblock {\em Trans. Amer. Math. Soc.}, 280(1):367--391, 1983. MR712266, Zbl 0551.57015

\bibitem[KM]{kawazumi2001primary}
Nariya Kawazumi and Shigeyuki Morita.
\newblock The primary approximation to the cohomology of the moduli space of
  curves and cocycles for the {M}umford-{M}orita-{M}iller classes.
\newblock {\em Math. Res. Lett.}. 1996, 629--641. MR1418577,  Zbl 0889.14009

\bibitem[KM05]{kotschick2005signatures}
D~Kotschick and S~Morita.
\newblock Signatures of foliated surface bundles and the symplectomorphism
  groups of surfaces.
\newblock {\em Topology}, 44(1):131--149, 2005. MR2104005, Zbl 1163.57304

\bibitem[KM07]{kotschick2004characteristic}
D.~Kotschick and S.~Morita.
\newblock Characteristic classes of foliated surface bundles with
  area-preserving holonomy.
\newblock {\em J. Differential Geom.}, 75(2):273--302, 2007. MR2286823, Zbl 1115.57016

\bibitem[KM09]{kotschick2009gelfand}
D~Kotschick and S~Morita.
\newblock The {G}elfand-{K}alinin-{F}uks class and characteristic classes of
  transversely symplectic foliations.
\newblock {\em arXiv:0910.3414}, 2009.

\bibitem[Kry71]{krygin1971continuation}
AB~Krygin.
\newblock Continuation of diffeomorphisms retaining volume.
\newblock {\em Functional Analysis and Its Applications}, 5(2):147--150, 1971.  MR0368067,  Zbl 0236.57016

\bibitem[McD79]{mcduff1979foliations}
Dusa McDuff.
\newblock Foliations and monoids of embeddings.
\newblock In {\em Geometric topology ({P}roc. {G}eorgia {T}opology {C}onf.,
  {A}thens, {G}a., 1977)}, pages 429--444. Academic Press, New York-London,
  1979.  MR0537744, Zbl 0473.57016

\bibitem[McD81]{mcduff1981groups}
Dusa McDuff.
\newblock On groups of volume-preserving diffeomorphisms and foliations with
  transverse volume form.
\newblock {\em Proceedings of the London Mathematical Society}, 3(2):295--320,
  1981.  MR0628279, Zbl 0411.57028

\bibitem[McD82]{MR707329}
Dusa McDuff.
\newblock Local homology of groups of volume preserving diffeomorphisms. {I}.
\newblock {\em Ann. Sci. \'Ecole Norm. Sup. (4)}, 15(4):609--648 (1983), 1982. MR707329, Zbl 0577.58005

\bibitem[McD83a]{mcduff1983local}
Dusa McDuff.
\newblock Local homology of groups of volume-preserving diffeomorphisms. iii.
\newblock In {\em Annales scientifiques de l'{\'E}cole Normale Sup{\'e}rieure},
  volume~16, pages 529--540. Soci{\'e}t{\'e} math{\'e}matique de France, 1983. MR0740589, Zbl 0619.58008

\bibitem[McD83b]{MR678355}
Dusa McDuff.
\newblock Some canonical cohomology classes on groups of volume preserving
  diffeomorphisms.
\newblock {\em Trans. Amer. Math. Soc.}, 275(1):345--356, 1983. MR678355, Zbl 0522.57029

\bibitem[McD87]{mcduff1987applications}
Dusa McDuff.
\newblock Applications of convex integration to symplectic and contact
  geometry.
\newblock In {\em Annales de l'institut Fourier}, volume~37, pages 107--133, 
  1987. MR0894563, Zbl 0572.58010

\bibitem[Mil83]{milnor1983homology}
John Milnor.
\newblock On the homology of {L}ie groups made discrete.
\newblock {\em Commentarii Mathematici Helvetici}, 58(1):72--85, 1983. MR0699007, Zbl 0528.20033

\bibitem[Mor87]{morita1987characteristic}
Shigeyuki Morita.
\newblock Characteristic classes of surface bundles.
\newblock {\em Inventiones mathematicae}, 90(3):551--577, 1987. MR0914849 ,Zbl 0608.57020

\bibitem[Mos65]{MR0182927}
J{\"u}rgen Moser.
\newblock On the volume elements on a manifold.
\newblock {\em Trans. Amer. Math. Soc.}, 120:286--294, 1965. MR0182927,  Zbl 0141.19407

\bibitem[MT01]{MR1856399}
Ib~Madsen and Ulrike Tillmann.
\newblock The stable mapping class group and {$Q(\bC P^\infty_+)$}.
\newblock {\em Invent. Math.}, 145(3):509--544, 2001. MR1856399, Zbl 1050.55007

\bibitem[MW07]{madsen2007stable}
Ib~Madsen and Michael Weiss.
\newblock The stable moduli space of {R}iemann surfaces: Mumford's conjecture.
\newblock {\em Annals of mathematics}, pages 843--941, 2007. MR2335797 ,Zbl 1156.14021

\bibitem[Nar17a]{nariman2014homologicalstability}
Sam Nariman.
\newblock Homological stability and stable moduli of flat manifold bundles.
\newblock {\em Advances in Mathematics}, 320:1227--1268, 2017.  MR3709135, Zbl 06794793

\bibitem[Nar17b]{nariman2015stable}
Sam Nariman.
\newblock Stable homology of surface diffeomorphism groups made discrete.
\newblock {\em Geom. Topol.}, 21(5):3047--3092, 2017. MR3687114,  Zbl 06794793

\bibitem[Pow78]{powell1978two}
Jerome Powell.
\newblock Two theorems on the mapping class group of a surface.
\newblock {\em Proceedings of the American Mathematical Society},
  68(3):347--350, 1978. MR0494115, Zbl 0391.57009

\bibitem[RW13]{MR3044208}
Oscar Randal-Williams.
\newblock The space of immersed surfaces in a manifold.
\newblock {\em Math. Proc. Cambridge Philos. Soc.}, 154(3):419--438, 2013. MR3044208, Zbl 1276.57035

\bibitem[RW16]{randal2009resolutions}
Oscar Randal-Williams.
\newblock Resolutions of moduli spaces and homological stability.
\newblock {\em Journal of the European Mathematical Society}, 18:1--81, 2016. MR3438379, Zbl 1366.55011

\bibitem[Tho57]{MR0089408}
R.~Thom.
\newblock L'homologie des espaces fonctionnels.
\newblock In {\em Colloque de topologie alg\'ebrique, {L}ouvain, 1956}, pages
  29--39. Georges Thone, Li\`ege; Masson \& Cie, Paris, 1957. MR0089408, Zbl 0077.36301

\end{thebibliography}
\end{document}